\newtheorem{thm}{Theorem}[section]  
\newtheorem{lemma}[thm]{Lemma}
\newtheorem{prp}[thm]{Proposition}
\theoremstyle{definition}
\newtheorem{dfn}[thm]{Definition}
\theoremstyle{remark}
\newtheorem{remark}[thm]{Remark}
\numberwithin{equation}{section} 
\def\comment#1{ }
\author{\large 
Yoshishige Haraoka \\
Hiroyuki Ochiai\\
Takeshi Sasaki\\
Masaaki Yoshida}
\title{\bf Fuchsian differential equations of order 3,...,6 with three singular points and an accessory parameter II,  Equations of order 3}
\date{\today}
\begin{document}

\maketitle


 
\begin{abstract}
  Fuchsian differential equations of order 3  with three singular points and with an accessory parameter are studied. When local exponents are generic, no shift operator is found, for codimension-1 subfamilies, neither. We found shift operators for several codimension-2 subfamilies of which accessory
parameter is assigned as a cubic polynomial in the local exponents.
 The Dotsenko-Fateev equation is one of them. 
\end{abstract}
\dosecttoc
\tableofcontents
\vfill

\noindent
{\bf Subjectclass}[2020]: Primary 34A30; Secondary 34M35, 33C05, 33C20, 34M03.

\noindent
{\bf Keywords}: Fuchsian differential equation, shift operators, reducibility,
  factorization, middle convolution, rigidity and accessory parameters,
  symmetry, hypergeometric differential equation, Dotsenko-Fateev equation.

\newpage

\section*{Introduction}
\addcontentsline{toc}{section}{\protect\numberline{}Introduction}  
In this paper we study shift operators for Fuchsian differential equations of order 3 with three singular points and one accessory parameter.
\par\medskip
In the previous paper \cite{HOSY1}, we started from the equation of order 3 as above and lifted this equation, via addition and middle convolution, to a differential equation of order 6 with an accessory parameter. While studying this equation, we find that if the accessory parameter is assigned as a cubic polynomial in the local exponents, this equation has nice properties: shift operators and several symmetries.
\par\medskip
We push down this equation, via addition and middle convolution, to a differential equation of order 3, of which accessory parameter is now assigned as a cubic polynomial of the local exponents. This equation, called $E_3$, is the differential equation we study in this paper.
\par\medskip
Though we can not find any shift operator for $E_3$ if the local exponents are generic, we find several codimension-2 conditions (we could not find any codimension-1 condition) on the six local exponents of $E_3$ to define differential equations with four free local exponents admitting shift operators for four independent shifts.  The Dotsenko-Fateev equation is one of them. 
\par\medskip
This paper is organized as follows: Section 1 explains how $E_3$ is derived from the equation of order 6 found in \cite{HOSY1}.  \par\noindent
The shift operators of the Dotsenko-Fateev equation are found in Section 2. \par\noindent
The shift operators of the equation related to the equation of order four found and studied in \cite{Z12} are obtained in Section 3. \par\noindent
Section 4 studies the symmetry of the cubic polynomial $A_{00}(e)$ in the local exponents. \par\noindent
Thanks to this symmetry, we find other four codimension-2 restrictions of $E_3$ admitting four independent shift operators. They are presented in Section 5.\par\noindent
There are codimension-2 restrictions of $E_3$ admitting less than four independent shift operators. Two examples are presented in Section 6.\par\noindent
In order to define and study equations, we need several tools of
investigation. We extract some of them from \cite{HOSY1} and put in the last section.
\par\medskip
We acknowledge that we used the software Maple, especially
{\sl DEtools \!}-package for multiplication and division of
differential operators.
Interested readers may refer to our list of data written in text files of Maple format 
\footnote{ http://www.math.kobe-u.ac.jp/OpenXM/Math/FDEdata} 
for the differential equations and the shift operators treated in this document.

\newpage
\section{Equations $H_3$, $E_6$ and $E_3$} \label{$G3$}\setcounter{stc}{2}\secttoc
In this section, we introduce a Fuchsian differential equation $E_3$ of order 3 with three singular points, and with the unique accessory parameter specified as a cubic polynomial of the local exponents. We first recall a very symmetric Fuchsian differential equation $E_6$ of order 6 found in \cite{HOSY1}, and transform it via addition and middle convolution to get $E_3$. 

\subsection{Equation $H_3$}
Any Fuchsian differential equation of order 3 with the Riemann scheme
$$R_3(\epsilon)=\left(\begin{array}{lll}0&\epsilon_1&\epsilon_2\\0&\epsilon_3&\epsilon_4\\ \epsilon_5&\epsilon_6&\epsilon_7\end{array}\right)\quad \epsilon_1+\cdots+\epsilon_7=3
$$
  can be written in $(x,\partial)$-form
  \footnote{See \cite{HOSY1} for the definition of $(x,\partial)$-form and $(x,\theta ,\partial)$-form of differential operators.}
as $$H_3: a_3\partial^3+a_2\partial^2+a_1\partial+a_0,\quad \partial=\frac{d}{dx},$$
where
\[\def\arraystretch{1.2}\setlength\arraycolsep{2pt} \begin{array}{rcl}
  a_3&=&x^2(x-1)^2,\ \ a_2=x(x-1)(a_{21}x+a_{20}),\\
  a_1&=&a_{12}x^2+a_{11}x+a_{10},\ \ a_0=a_{01}x+a_{00}, \\
  a_{21}&=&6-\epsilon_1-\epsilon_2-\epsilon_3-\epsilon_4,\ \ a_{20}=-3+\epsilon_1+\epsilon_2,\\
  a_{12}&=&(\epsilon_5+\epsilon_6+1)\epsilon_7+(\epsilon_6+1)(\epsilon_5+1), \\
  a_{11}&=&-(\epsilon_5+\epsilon_6)\epsilon_7+(-\epsilon_1\epsilon_2+\epsilon_3\epsilon_4-\epsilon_5\epsilon_6+2\epsilon_1+2\epsilon_2-4),\\
  a_{10}&=&(\epsilon_1-1)(\epsilon_2-1),\ a_{01}\ =\ \epsilon_5\epsilon_6\epsilon_7,\  
\end{array}
\]
and in $(x,\theta ,\partial)$-form as 
\footnote{The composition of two differential operators $P$ and $Q$ is denoted by $P\circ Q$, often abbreviated as $PQ$. }
$$xS_n+S_0+S_1\circ\partial,\quad \theta =x\partial,$$
where
\[\def\arraystretch{1.1}\setlength\arraycolsep{2pt} \begin{array}{rcl}
  S_n&=&(\theta +\epsilon_5)(\theta +\epsilon_6)(\theta +\epsilon_7), \\
  S_0&=&-2\theta^3+(2\epsilon_1+2\epsilon_2+\epsilon_3+\epsilon_4-3)\theta^2 \\
  &&\qquad +(-\epsilon_1\epsilon_2+(\epsilon_3-1)(\epsilon_4-1)-\epsilon_5\epsilon_6-\epsilon_6\epsilon_7-\epsilon_7\epsilon_5)\theta +a_{00},\ \\
  S_1&=&(\theta -\epsilon_1+1)(\theta -\epsilon_2+1).
\end{array}  \setlength\arraycolsep{3pt} 
\]
The coefficient $a_{00}$ does not affect the local exponents. In this sense one can call this coefficient the accessory parameter.

\subsection{Equation $E_6$ found in \cite{HOSY1}}
Any Fuchsian differential equation of order 6  with the Riemann scheme
$$R_6:\left(\begin{array}{lcccccc}x=0:&0&1&2&e_1&e_2&e_3\\x=1:&0&1&2&e_4&e_5&e_6\\x=\infty:&s&s+1&s+2&e_7&e_8&e_9\end{array}\right),\quad e_1+\cdots+e_9+3s=6,
  $$
and with spectral type $(3111,3111,3111)$ can be written in $(\theta ,\partial)$-form as
   \begin{eqnarray}\label{eqT}H_6=T_0(\theta )+T_1(\theta )\partial+T_2(\theta )\partial^2+T_3(\theta )\partial^3,\end{eqnarray}
where
{\setlength\arraycolsep{2pt} \def\arraystretch{1.1}
  \begin{eqnarray} 
\quad T_0&=&(\theta +2+s)(\theta +1+s)(\theta +s)B_0,\quad B_0=(\theta +e_7)(\theta +e_8)(\theta +e_9), \label{eqT0}\\
\quad      T_1&=&(\theta +2+s)(\theta +1+s)B_1,\quad B_1=T_{13}\theta^3+T_{12}\theta^2+T_{11}\theta +T_{10}, \label{eqT1}\\
\quad    T_2&=&(\theta +2+s)B_2,\quad B_2=T_{23}\theta^3+T_{22}\theta^2+T_{21}\theta +T_{20}, \label{eqT2}\\
\quad    T_3&=&(-\theta -3+e_1)(-\theta -3+e_2)(-\theta -3+e_3), \label{eqT3}
  \end{eqnarray}
  }
and
\[\def\arraystretch{1.2}\setlength\arraycolsep{3pt} \begin{array}{rcl}
  T_{13}&=&-3,\quad T_{23}=3,\quad  T_{12}=-9+s_{11}-2s_{13},\quad  T_{22}=18+s_{13}-2s_{11},\\
T_{11}&=&-8+(s_{11}^2+2s_{11}s_{13}-s_{12}^2+s_{13}^2)/3+s_{11}-5s_{13}-s_{21}+s_{22}-2s_{23},\\
T_{21}&=&35+(-s_{11}^2-2s_{11}s_{13}+s_{12}^2-s_{13}^2)/3-7s_{11}+5s_{13}+2s_{21}-s_{22}+s_{23},\\
T_{20}&=&-T_{10}+19+(s_{11}^2s_{13}-s_{11}s_{12}^2+s_{11}s_{13}^2-s_{12}^2s_{13})/9+(s_{13}^3+s_{11}^3-2s_{12}^3)/27\\
&&+(-2s_{11}^2-4s_{11}s_{13}+s_{11}s_{22}+2s_{12}^2+s_{22}s_{12}-2s_{13}^2+s_{22}s_{13})/3\\
&&
-5s_{11}+4s_{13}+3s_{21}-2s_{22}-s_{31}-s_{32}-s_{33},
\end{array}
\]
except $T_{10}$, which does not affect the local exponents. In this sense, one can call this coefficient the {\rm accessory parameter}.
Here $s_*$ are symmetric polynomials of the exponents:
\def\arraystretch{1.1}\setlength\arraycolsep{1pt}
\begin{equation}\label{e19tos}\begin{array}{rl}      
s_{11}&=e_1+e_2+e_3,\quad s_{12}=e_4+e_5+e_6,\quad s_{13}=e_7+e_8+e_9,\\
s_{21}&=e_1e_2+e_1e_3+e_2e_3,\quad s_{22}=e_4e_5+e_4e_6+e_5e_6,\\ 
s_{23}&=e_7e_8+e_7e_9+e_8e_9,\quad s_{31}=e_1e_2e_3,\quad s_{32}=e_4e_5e_6,\\ 
s_{33}& = e_7e_8e_9,\quad s=-(s_{11}+s_{12}+s_{13}-6)/3.
\end{array}\end{equation}
\setlength\arraycolsep{3pt}
\begin{dfn}The differential equation $H_6$ with the following cubic polynomial $S_{10}$ as the coefficient $T_{10}$ 
  is called $E_6(e)$. 
    $$\begin{array}{l}
    54S_{10}:=2s_{11}^3 + 6s_{11}^2s_{13} - 2s_{12}^3 - 6s_{12}^2s_{13} + 9s_{11}^2 + 18s_{11}s_{13} 
    - 9s_{11}s_{21} \\+ 18s_{11}s_{23} - 9s_{12}^2 + 9s_{12}s_{22} + 9s_{13}^2     - 18s_{13}s_{21} 
    + 18s_{13}s_{22} + 9s_{13}s_{23} \\+ 18s_{11} - 126s_{13} - 27s_{21} + 27s_{22} - 135s_{23} + 27s_{31} - 27s_{32} - 81s_{33} - 135\\
  \end{array}$$
\end{dfn}

\subsection{Definition of $E_3$ as a middle convolution of $E_6$} \label{E3Def}
The $(\theta ,\partial)$-form of $E_6$ suggests, thanks to the formulae
\[(\theta +3)(\theta +2)(\theta +1)=\partial^3x^3,       
   \ (\theta +3)(\theta +2)\partial=\partial^3x^2,
   \ (\theta +3)\partial^2=\partial^3x,
   \ \theta \partial=\partial(\theta -1),\]
to modify the expression by replacing
$\theta $ by $\theta -t$ (middle convolution with parameter $t$),
where
  \[t:=s-1,\quad s=2-(e_1+\cdots+e_9)/3,\]
so that $E_6(\theta =\theta -t)$ is divisible by $\partial^3$ from the left
and, if we write the quotient by $mcE_6=x^3(x-1)^3\partial^3+\cdots$,
then its Riemann scheme is
$$ \left(\begin{array}{ccc}
  e_1+t&e_2+t&e_3+t\\ e_4+t&e_5+t&e_6+t\\ e_7-t&e_8-t&e_9-t\end{array}\right).
$$
We next make an addition: $mcE_6\circ x^{t+e_3}(x-1)^{t+e_6}$; 
the Riemann scheme changes into
$$ \left(\begin{array}{ccc}
  0&e_1-e_3&e_2-e_3\\ 0&e_4-e_6&e_5-e_6\\
  e_7+e_3+e_6+t&e_8+e_3+e_6+t&e_9+e_3+e_6+t
\end{array}\right).
$$
Introduce parameters $\epsilon_1,...,\epsilon_7$ by
$$\begin{array}{llll}
 &\qquad\qquad\qquad\qquad e_1 - e_3 &= \epsilon_1,\qquad\qquad\qquad  e_2 - e_3 &= \epsilon_2,\\
 &\qquad\qquad\qquad\qquad e_4 - e_6 &= \epsilon_3,\qquad\qquad\qquad  e_5 - e_6 &= \epsilon_4,\\
e_3 + e_6 + e_7+t &= \epsilon_5,\ \qquad e_3 + e_6 + e_8+t &=\epsilon_6,\ \qquad  e_3 + e_6 + e_9+t &= \epsilon_7,
\end{array}$$
where $\epsilon_1+\cdots+\epsilon_7=3$. Then it is the equation $H_3(\epsilon)$, with the accessory parameter $a_{00}$ (the constant term of this equation) replaced by a cubic polynomial $A_{00}(\epsilon)$, where
\begin{equation}\label{accpara00}
  \def\arraystretch{1.1}\setlength\arraycolsep{2pt} \begin{array}{rcl}
  54A_{00}(\epsilon)&=& -4(\epsilon_1+\epsilon_2-\epsilon_3-\epsilon_4)^3-27\epsilon_5\epsilon_6\epsilon_7\\
  &&+9(\epsilon_1+\epsilon_2-\epsilon_3-\epsilon_4)(\epsilon_5\epsilon_6+\epsilon_5\epsilon_7+\epsilon_6\epsilon_7-2)\\
  &&  +9\epsilon_1\epsilon_2(\epsilon_1+\epsilon_2-1)+18(\epsilon_1+\epsilon_2-1)(\epsilon_3^2+\epsilon_3\epsilon_4+\epsilon_4^2)\\
  &&-9\epsilon_3\epsilon_4(\epsilon_3+\epsilon_4-1)-18(\epsilon_3+\epsilon_4-1)(\epsilon_1^2+\epsilon_1\epsilon_2+\epsilon_2^2),
  \end{array}
\end{equation}

In this way, we get a middle convolution of $E_6(e)$, which we call $E_3(\epsilon)$. Changing the notation of the local exponents from $\epsilon$ to $e$, we have

\begin{dfn}$E_3(e)$ is the equation with the Riemann scheme $R_3(e)$
  and with the accessory parameter $a_{00}=A_{00}(e).$
\end{dfn}

Unlike the equation $E_6(e)$, studied in \cite{HOSY1}, for the equation $E_3(e)$,
no shift operator is found if $e=(e_1,\dots,e_7)$ are generic, no codimension-1 reducibility condition is found to the authors.

\subsection{Symmetries of $E_3$}
\begin{prp}\label{symmE3}
  \begin{itemize}
    \item Adjoint symmetry: the adjoint $E_3^*$ of $E_3=a_3\partial^3+a_2\partial^2+a_1\partial+a_0$ is by definition $E_3^*:=\partial^3\circ a_3-\partial^2\circ a_2+\partial\circ a_1-a_0.$
  This is equal to $$E_3^*=E_3(-e_1,\dots,-e_4,2-e_5,2-e_6).$$
\item $(x\to1-x)$-symmetry: $$E_3(e_1,\dots,e_6)|_{x\to1-x}=E_3(e_3,e_4,e_1,e_2,e_5,e_6),$$
\item $(x\to1/x)$-symmetry: $$x^{-s}E_3(e_1,\dots,e_6))|_{x\to1/x}\circ x^{s}=E_3(e_5-s,e_6-s,e_3,e_4,e_1+s,e_2+s),$$
  where $E_3|_{x\to1-x}$ and $E_3|_{x\to1/x}$ are $E_3$ after the coordinate changes $x\to1-x$ and $x\to1/x$, respectively.
\end{itemize}
\end{prp}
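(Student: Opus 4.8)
The plan is to use the organizing principle, implicit in the very definition of $H_3$, that an $H_3$-operator is determined by its six free local exponents together with the single accessory parameter $a_{00}$: the coefficients $a_{21},a_{20},a_{12},a_{11},a_{10},a_{01}$ are all prescribed polynomials in the exponents, and only $a_{00}$ is free. Each of the three operations (formal adjoint, $x\mapsto 1-x$, and $x\mapsto 1/x$ followed by conjugation by $x^{s}$) preserves the class of operators with leading coefficient $x^{2}(x-1)^{2}$ and singular set $\{0,1,\infty\}$, so each carries $E_3(e)$ to some $H_3(e')$ with some new accessory parameter $a_{00}^{\mathrm{new}}$. Proving each identity therefore splits into two tasks: (i) identify $e'$ and check it agrees with the claim, and (ii) verify $a_{00}^{\mathrm{new}}=A_{00}(e')$.

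For (i) I would pass to the $(x,\theta,\partial)$-form. Using $S_1(\theta)\partial=x^{-1}S_1(\theta-1)\theta$ one rewrites $H_3$ in the three-term shape $H_3=x^{-1}C_{-1}(\theta)+C_0(\theta)+xC_1(\theta)$ with $C_{-1}=S_1(\theta-1)\theta$, $C_0=S_0$, $C_1=S_n$, whose factorizations display the exponents at $0$ (roots of $C_{-1}$) and at $\infty$ (roots of $C_1$). The operations act by simple substitutions: the formal adjoint is the anti-automorphism $x\mapsto x$, $\partial\mapsto-\partial$ (hence $\theta\mapsto-\theta-1$), giving $S_n\mapsto S_n(-\theta-2)$, $S_1\mapsto -S_1(-\theta-2)$, $S_0\mapsto S_0(-\theta-1)$; the map $x\mapsto1/x$ acts by $x\mapsto x^{-1}$, $\theta\mapsto-\theta$, interchanging $C_{\pm1}$, and conjugation by $x^{s}$ sends $\theta\mapsto\theta+s$. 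Refactoring then reads off $e'$: for the adjoint one gets exponents $\{0,-e_1,-e_2\}$, $\{0,-e_3,-e_4\}$ at $0,1$ and $\{2-e_5,2-e_6,2-e_7\}$ at $\infty$; for $x\mapsto1/x$ the requirement that the image again be an $H_3$-operator (that $C_{-1}'$ keep the factor $\theta$, so that $x^{-1}C_{-1}'$ is polynomial) forces $s=e_7$ and produces exactly $E_3(e_5-s,e_6-s,e_3,e_4,e_1+s,e_2+s)$. The case $x\mapsto1-x$ is cleanest in the $(x,\partial)$-form: the substitution is an algebra automorphism fixing $\infty$ and swapping $0,1$, so it swaps the exponents at $0$ and $1$ and fixes those at $\infty$, matching $E_3(e_3,e_4,e_1,e_2,e_5,e_6)$.

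For (ii) the three-term form reduces $a_{00}^{\mathrm{new}}$ to a single evaluation of $S_0$, since $a_{00}$ is the constant ($\theta=0$) term of $S_0$. Up to the overall sign needed to normalize the leading coefficient, the adjoint replaces $S_0(\theta)$ by $S_0(-\theta-1)$, giving $a_{00}^{\mathrm{new}}=-S_0(-1)$; the map $x\mapsto1/x$ gives $a_{00}^{\mathrm{new}}=-S_0(-e_7)$; and the $(x,\partial)$-form gives $a_{00}^{\mathrm{new}}=-a_{01}-a_{00}$ for $x\mapsto1-x$. Substituting $a_{00}=A_{00}(e)$ together with the explicit non-constant coefficients of $S_0$, each identity becomes a polynomial identity for $A_{00}$. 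For $x\mapsto1-x$ it reads $A_{00}(e)+A_{00}(e_3,e_4,e_1,e_2,e_5,e_6)=-e_5e_6e_7$, which is immediate: in the formula for $54A_{00}$ the term $-27e_5e_6e_7$ is invariant under $(e_1,e_2)\leftrightarrow(e_3,e_4)$ while the sum of all remaining terms changes sign. For the adjoint it reads $A_{00}(e')+A_{00}(e)=2-2(e_1+e_2+e_3+e_4)-(e_5e_6+e_6e_7+e_7e_5)-e_1e_2+e_3e_4$ with $e'=(-e_1,-e_2,-e_3,-e_4,2-e_5,2-e_6)$, and the $x\mapsto1/x$ case reduces analogously to a cubic identity obtained by expanding $-S_0(-e_7)$.

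The hard part is exactly these cubic identities for $A_{00}$. They are special to this particular $A_{00}$ (the one inherited from the cubic $S_{10}$ of $E_6$): a generic accessory-parameter assignment would not be sent to $A_{00}(e')$ by these operations, so the real content of the proposition is the equivariance of the cubic $A_{00}$ under the group generated by the three operations—precisely the symmetry analyzed in Section~4. Having reduced everything to finitely many polynomial identities in $e_1,\dots,e_6$ (with $e_7=3-e_1-\cdots-e_6$), I would verify them by direct expansion, most safely with the computer-algebra support already used in the paper; as an independent check one can instead lift the three operations through the addition and middle convolution that define $E_3$ and deduce them from the corresponding symmetries of $E_6$ established in \cite{HOSY1}.
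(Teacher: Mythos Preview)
The paper states this proposition without proof, evidently regarding it as a routine computation once the formula \eqref{accpara00} for $A_{00}$ is in hand. Your strategy is exactly the right one and is essentially what the authors would have in mind: each of the three operations preserves the class of $H_3$-operators, so it suffices to (i) track the local exponents and (ii) check that the new constant term equals $A_{00}$ at the transformed parameters. Your reading $s=e_7$ is correct (cf.\ the line ``$e_7=s$'' in \S\ref{E3E3aShift}); your computation $a_{00}^{\mathrm{new}}=-a_{01}-a_{00}$ for $x\mapsto1-x$ is correct, and the resulting identity $A_{00}(e)+A_{00}(e_3,e_4,e_1,e_2,e_5,e_6)=-e_5e_6e_7$ is indeed immediate from the antisymmetry of all terms of $54A_{00}$ except $-27e_5e_6e_7$ under $(e_1,e_2)\leftrightarrow(e_3,e_4)$. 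Your formula for the adjoint case, $A_{00}(e')+A_{00}(e)=2-2(e_1+e_2+e_3+e_4)-(e_5e_6+e_6e_7+e_7e_5)-e_1e_2+e_3e_4$, also checks out against a direct $(x,\partial)$-computation of the constant term of $a_3'''-a_2''+a_1'-a_0$. The remaining verifications are, as you say, mechanical polynomial identities in six variables; in the spirit of the paper a computer-algebra check is entirely appropriate, and the alternative you mention (lifting to the established symmetries of $E_6$ through addition and middle convolution) would also work.
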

\subsection{Self-afjoint $E_3$}
By Proposition \ref{symmE3}, the equation $E_3$ is self-adjoint if and only if
$$e_1=e_2=e_3=e_4=0,\quad e_4=e_5=1.$$
Its Riemann scheme of is 
$$\left(\begin{array}{cccc}x=0&0&0&0\\ x=1&0&0&0\\ x=\infty&1&1&1
        \end{array}\right),$$
      and the accessory parameter is $A=-1/2$. 
It is irreduciblw and is solved by the square of the hypergeometric function $F(1/2,1/2,1;x)^2$.

\section{Equation $S\!E_3$ }\label{SE3}\secttoc
In this section, we first recall the Dotsenko-Fateev equation, of order 3, and find that it is a codimension-2 specialization of $E_3$. We find for this equation  shift operators for four independent shifts, the S-values (see Definition \ref{Svalues}), and the reducible cases.  

\subsection{The Dotsenko-Fateev equation}
The Dotsenko-Fateev equation is originally found as 
a differential equation satisfied by functions in $x$ defined by the integral
\begin{equation}\label{intrep}\int\omega(x),\quad \omega(x):=\prod_{i=1,2} t_i^a(t_i-1)^b(t_i-x)^c\cdot (t_1-t_2)^g\ dt_1\wedge dt_2
\end{equation}
Consider in the real $(t_1,t_2)$-plane the arrangement of seven lines:
  $$ \prod_{i=1,2}t_i(t_i-1)(t_i-x)\cdot(t_1-t_2)=0.$$
Since the number of bounded chambers cut out by this arrangement is 6,
if the exponents of the integrand is generic,
functions defined by the above integral would satisfy
a differential equation of order 6.
But since the integrand is invariant
under the change $t_1\leftrightarrow t_2$,
and the number of bounded chambers modulo this change
is 3, the functions defined by the above integral satisfy
an equation of order $3$, which is the  Dotsenko-Fateev equation.

The Dotsenko-Fateev operator (\cite{DF}) is an operator of order 3
and is defined as
\begin{equation}\label{dfequaion}
  D\!F(a,b,c,g)=x^2(x-1)^2\partial^3+D\!F_1\partial^2+D\!F_2\partial+D\!F_3,
  \end{equation}
where 
\begin{eqnarray*} 
 D\!F_1 &=& -(x-1)x(3ax+3bx+6cx+2gx-3a-3c-g), \\
 D\!F_2 &=& 2a^2x^2+4abx^2+12acx^2+3agx^2+2b^2x^2+12bcx^2+3bgx^2\\
 && +12c^2x^2+8cgx^2+g^2x^2-4a^2x-4abx-16acx-4agx+ax^2-8bcx\\
 && -2bgx+bx^2-12c^2x-8cgx +6cx^2 -g^2x+gx^2+2a^2+4ac+ag\\
 && -2ax+2c^2+cg-6cx-gx+a+c, \\
 D\!F_3 &=& c(2a+2b+2c+g+2)(-(2a+2b+4c+2g+2)x+2a+2c+g+1).
\end{eqnarray*} 
The accessory parameter, the constant term of $D\!F_3$,
is $$c(2a+2c+g+1)(2a+2b+2c+g+2)$$ and the Riemann scheme is 
\begin{equation} \label{RDF}
R_{D\!F}=\left( \begin{array}{cccc}
  x=0 & 0&a+c+1&2a+2c+g+2\\
  x=1 & 0&b+c+1&2b+2c+g+2\\
  x=\infty&\ -2c\ &\ -a-b-2c-g-1\ &\ -2a-2b-2c-g-2
\end{array} \right).
\end{equation}

\subsection{Definition of $S\!E_3$}\label{defSE3}
\begin{thm}\label{DFequalsSE3}The Dotsenko-Fateev operator $D\!F$ is equal to $E_3$ with the Riemann scheme $R_{D\!F}$.
\end{thm}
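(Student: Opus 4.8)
The plan is to realize $D\!F$ as a member of the family $H_3$ by matching Riemann schemes, and then to reduce the operator identity to a single scalar identity between the two accessory parameters. Comparing \eqref{RDF} with $R_3(e)$ point by point, a consistent assignment of the local exponents is
\[
e_1=a+c+1,\quad e_2=2a+2c+g+2,\quad e_3=b+c+1,\quad e_4=2b+2c+g+2,
\]
\[
e_5=-2c,\quad e_6=-a-b-2c-g-1,\quad e_7=-2a-2b-2c-g-2.
\]
A first sanity check is that these satisfy the Fuchs relation $e_1+\cdots+e_7=3$. Since $H_3$ (hence $E_3$) is symmetric under $e_1\leftrightarrow e_2$, under $e_3\leftrightarrow e_4$, and under all permutations of $e_5,e_6,e_7$, only the unordered exponents at each singular point matter, so this choice is harmless.

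The leading coefficient of $D\!F$ in \eqref{dfequaion} is exactly $a_3=x^2(x-1)^2$, so $D\!F$ is a Fuchsian operator of order $3$ with singular points $0,1,\infty$ in the same normalization as $H_3$. By the description of $H_3$, any such operator with a prescribed Riemann scheme is $H_3(e)$ whose six coefficients $a_{21},a_{20},a_{12},a_{11},a_{10},a_{01}$ are determined by the exponents, the single remaining coefficient $a_{00}$ being free (the seven coefficients of $H_3$ against six independent exponents account for exactly one accessory parameter). Consequently $D\!F$ is forced to equal $H_3(e)$ for the value of $a_{00}$ equal to its own constant term, namely $c(2a+2c+g+1)(2a+2b+2c+g+2)$. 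As $E_3(e)$ is by definition $H_3(e)$ with $a_{00}=A_{00}(e)$, the theorem is equivalent to the scalar identity
\[
A_{00}(e)=c(2a+2c+g+1)(2a+2b+2c+g+2)
\]
under the substitution above.

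In practice I would also confirm the non-accessory coefficients directly, by substituting $e_1,\dots,e_7$ into the explicit formulas for $a_{21},\dots,a_{01}$ and comparing, coefficient by coefficient in powers of $x$, with $D\!F_1,D\!F_2$ and the linear part of $D\!F_3$. For instance, $a_{01}=e_5e_6e_7=-2c(a+b+2c+g+1)(2a+2b+2c+g+2)$ agrees with the coefficient of $x$ in $D\!F_3$ because $2a+2b+4c+2g+2=2(a+b+2c+g+1)$; this substitution simultaneously re-verifies that $D\!F$ really has the Riemann scheme $R_{D\!F}$ (the indicial equations at $0,1,\infty$ being encoded in these coefficients). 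The main obstacle is then the accessory-parameter identity: one inserts the affine-linear $e_i(a,b,c,g)$ into the cubic polynomial $A_{00}(e)$ of \eqref{accpara00} and checks that the resulting cubic in $a,b,c,g$ collapses to the factored form $c(2a+2c+g+1)(2a+2b+2c+g+2)$. This polynomial identity is the genuine content of the theorem --- the step the authors carry out with Maple --- while everything else is forced by the matching of Riemann schemes.
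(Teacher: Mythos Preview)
Your proof is correct and takes essentially the same approach as the paper: both reduce the claim to the single polynomial identity $A_{00}(e)=c(2a+2c+g+1)(2a+2b+2c+g+2)$ under the substitution $e_i=e_i(a,b,c,g)$, which is then verified by direct computation. Your version is more explicit about \emph{why} the problem reduces to this one scalar check (namely that $H_3$ with a given Riemann scheme is determined up to the accessory parameter $a_{00}$), but the core argument is identical.
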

\begin{proof} Substitute
  $$etoag:\quad\begin{array}{lll}e_1&=a+c+1,\ \ e_2&=2a+2c+g+2,\\
    e_3&=b+c+1,\ \ e_4&=2b+2c+g+2,\\
    e_5&=-2c,\ \ \qquad e_6&=-a-b-2c-g-1,\end{array}$$
  in the expression of $A_{00}(e)$ given in \eqref{accpara00}
  to find $A_{00}(etoag)=c(2a+2c+g+1)(2a+2b+2c+g+2)$. \end{proof} 

\begin{dfn}
  The equation $S\!E_3$ is a specialization of the equation $E_3$ characterized by the system of two equations
  $$2e_1-e_2=2e_3-e_4=-e_5+2e_6-e_7.$$
  When the exponents are parameterized by $\{a,b,c,g\}$ as in $R_{D\!F}$, $S\!E_3$ coincides with the Dotsenko-Fateev equation $D\!F(a,b,c,g)$. The constant term of $S\!E_3$ is $A_{00}(etoag)$.
\end{dfn}

\subsection{Shift operators of $S\!E_3$ and S-values}
\begin{thm}\label{S13shiftopSE3}$S\!E_3$ admits shift operators for the shifts
  $$a\pm:a\to a\pm1,\ b\pm: b\to b\pm1,\ c\pm:c\to c\pm1 {\rm \quad and\quad} g\pm:g\to g\pm2.$$
\end{thm}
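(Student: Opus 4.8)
The plan is to realise each shift operator as an intertwiner between two copies of $E_3$, to reduce its existence to a finite linear problem, and to use the integral representation \eqref{intrep} both to produce the natural candidates and to explain the shift amounts. First I would record, via the parameter dictionary $etoag$ of Theorem~\ref{DFequalsSE3}, the integer translation that each elementary shift induces on the seven exponents $(e_1,\dots,e_7)$ of $R_3$. For instance $a\to a+1$ sends $(e_1,\dots,e_7)$ to $(e_1+1,e_2+2,e_3,e_4,e_5,e_6-1,e_7-2)$, while $c\to c+1$ adds $(1,2,1,2,-2,-2,-2)$ and $g\to g+2$ adds $(0,2,0,2,0,-2,-2)$; in every case the total shift is $0$, so the Fuchs relation $e_1+\dots+e_7=3$ is preserved and the target is again an $SE_3$. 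The $b$-shift is the image of the $a$-shift under the $(x\to1-x)$-symmetry of Proposition~\ref{symmE3} (which exchanges the pairs $(e_1,e_2)$ and $(e_3,e_4)$), so it suffices to build the operators for $a+$, $c+$, $g+$ and to transport the one for $a+$ by that symmetry.

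A shift operator for a translation $e\mapsto e'$ is an operator $P$ of order at most $2$ with polynomial coefficients in $x$ carrying solutions of $E_3(e)$ to solutions of $E_3(e')$; equivalently $E_3(e')\circ P=Q\circ E_3(e)$ for some operator $Q$ of order at most $2$, i.e. the right remainder of $E_3(e')\circ P$ on division by $E_3(e)$ vanishes. I would fix an ansatz $P=p_2(x)\partial^2+p_1(x)\partial+p_0(x)$, the degrees of the $p_i$ being bounded by matching, at each of $x=0,1,\infty$, the local exponents of $E_3(e)$ to the shifted exponents of $E_3(e')$: the requirement that $P$ move the prescribed exponent by the prescribed integer fixes the leading and trailing orders of the coefficients, hence their degrees. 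Imposing the vanishing of the remainder then becomes a linear system in the finitely many coefficients of $P$ and $Q$, whose entries are polynomials in $e$. The crucial point is that this overdetermined system should be consistent exactly on the codimension-$2$ locus $2e_1-e_2=2e_3-e_4=-e_5+2e_6-e_7$ defining $SE_3$, the two defining relations being precisely the compatibility conditions that make it solvable; this is why $E_3$ admits no such operator for generic $e$.

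The natural candidates, and a conceptual check of the shift amounts, come from \eqref{intrep}. Multiplying the integrand $\omega(x)$ by $t_1t_2$ raises $a$ by $1$, by $\prod_i(t_i-1)$ raises $b$, by $\prod_i(t_i-x)$ raises $c$, and by $(t_1-t_2)^2$ raises $g$ by $2$; in each case the twisted de Rham (integration-by-parts) relations on the configuration space of $(t_1,t_2)$ rewrite the inserted form, modulo exact forms, as a combination of $\omega$ and its $x$-derivatives up to order $2$, and that combination is the operator $P$. This simultaneously explains why $g$ moves in steps of $2$ rather than $1$: the factor $(t_1-t_2)$ is antisymmetric under the involution $t_1\leftrightarrow t_2$ that reduces the order-$6$ integral to the order-$3$ equation, so only its even powers remain in the symmetric class. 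The backward shifts $a-,b-,c-,g-$ arise in the same way by inserting the reciprocal factors, or more economically by applying the adjoint symmetry of Proposition~\ref{symmE3}, which converts a forward shift operator into a backward one after the exponent change $(e_1,\dots,e_6)\mapsto(-e_1,\dots,-e_4,2-e_5,2-e_6)$.

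The step I expect to be the genuine obstacle is establishing solvability of the linear system, that is, verifying that the two relations cutting out $SE_3$ are exactly the consistency conditions for each of the eight shifts, together with pinning down the degree bounds so that the ansatz is neither too small to contain a solution nor too large to control. In practice this is carried out by explicit multiplication and right-division of differential operators; the output is the list of operators $P$, together with their companions $Q$, whose existence is the content of the theorem.
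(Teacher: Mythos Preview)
Your approach coincides with the paper's: the proof there is simply the explicit exhibition of the operators $(P,Q)$ in \S\ref{AppenSE3}, found by exactly the ansatz-and-linear-system computation you outline, with the $(x\to1-x)$ symmetry relating the $a$- and $b$-shifts and the integral-representation explanation of the step size $2$ for $g$ recorded as a remark. One small adjustment to your ansatz: in the paper's normalization the backward operators $P_{a-},P_{b-},P_{c-},P_{g-}$ carry denominators such as $1/x$ or $1/(x(x-1))$, so for those shifts you must allow rational rather than polynomial coefficients (equivalently, clear denominators and permit a higher-degree leading term).
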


Table of the shift operators are in the next subsection. (cf SE3PQ)
\begin{remark}For the shifts $a-,b-,c-$ and $g-$, the shift operators have denominators   such as $x,x-1,x(x-1)$.
  This is to be compared with that for the other equations. See $P_{(1010)}$ in Proposition \ref{shiftopZ3}. 
\end{remark}

Let $P_{a\pm},P_{c\pm},P_{g\pm}$ be the shift operators for the shifts $a\pm$, $c\pm$, $g\pm$, respectively.
  We normalize them as
\[ \def\arraystretch{1.1}\begin{array}{lll}
    &P_{a-}=(x-1)^2\partial^2+\cdots,  &P_{c-}=\partial^2+\cdots, \\
  &P_{a+}=x^2(x-1)^2\partial^2+\cdots, &P_{c+}=x^2(x-1)^2\partial^2+\cdots, \\&P_{g+}=x^2(x-1)^2\partial^2+\cdots&
\end{array}\]
 and
 $$P_{g-}= ((2c + g)(a + b + g)x^2-(2c + g)(2b + g)x+(2b + g)(a + c + g))\partial^2+\cdots,
 $$
 and the same for the $Q$'s.
 Note that $P_{g-}$ (as well as $Q_{g-}$) has a strange head.

\begin{remark} The shifts $g\to g\pm1$ do not admit shift operators. Since the domains of integration (\ref{intrep}) in the $(t_1,t_2)$-plane is folded along the diagonal, $(t_1-t_2)^g$ should be considered to be $((t_1-t_2)^2)^{g/2}$.\end{remark}

\begin{prp}\label{SvalueSE3}With the normalization above, we get the S-values:
\begin{eqnarray*}
&& Sv_{a-}:=P_{a+}(a-1)\circ P_{a-} = a(2a+g)(a+1+g+b+c)(2a+2b+2c+2+g), \\
&& Sv_{b-}:=P_{b+}(b-1)\circ P_{b-} = b(2b+g)(a+1+g+b+c)(2a+2b+2c+2+g), \\
&& Sv_{c-}:=P_{c+}(c-1)\circ P_{c-} = c(2c+g)(a+1+b+c+g)(2a+2b+2c+2+g), \\
&& Sv_{g-}:=P_{g+}(g-2)\circ P_{g-} = (g-1)(2a+g)(2b+g)(2c+g)(a+b+c+g+1)\\
&& \qquad \qquad \qquad \qquad \times (a+b+c+g)(2a+2b+2c+g+2).
\end{eqnarray*}
Here, for example, $P_{a+}(a-1)\circ P_{a-}$ is an abbreviation of  $P_{a+}(a=a-1)\circ P_{a-}(a)$. \end{prp}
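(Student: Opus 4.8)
The plan is to prove first that each composite in the statement reduces, modulo the equation $S\!E_3$, to a scalar (a polynomial in $a,b,c,g$ with no dependence on $x$), and then to identify that scalar. For the scalar part I would argue as follows. By Theorem \ref{S13shiftopSE3}, $P_{a-}$ is a shift operator, so it intertwines $S\!E_3(a)$ with $S\!E_3(a-1)$ and carries the three-dimensional solution space $V$ of $S\!E_3(a)$ into that of $S\!E_3(a-1)$; likewise $P_{a+}(a-1)$ carries the solution space of $S\!E_3(a-1)$ back into $V$. Hence $C:=P_{a+}(a-1)\circ P_{a-}$ maps $V$ to itself. Since $C$ is a differential operator with single-valued (rational) coefficients on $\mathbb{P}^1\setminus\{0,1,\infty\}$, the induced endomorphism $C|_V$ commutes with the monodromy representation of $S\!E_3(a)$. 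For generic $(a,b,c,g)$ this representation is irreducible, so Schur's lemma over $\mathbb{C}$ gives $C|_V=\lambda\cdot\mathrm{id}_V$ for some $\lambda\in\mathbb{C}$. Writing $C-\lambda=Q\circ S\!E_3+\mathrm{Rem}$ with $\mathrm{ord}\,\mathrm{Rem}\le 2$, both $C-\lambda$ and $Q\circ S\!E_3$ kill $V$, so $\mathrm{Rem}$ kills $V$ as well; an operator of order at most $2$ cannot annihilate the $3$-dimensional space $V$ unless it vanishes identically, whence $C\equiv\lambda\pmod{S\!E_3}$. The same reasoning covers $Sv_{b-}$, $Sv_{c-}$ and $Sv_{g-}$, the last with $g\pm$ the shift $g\to g\pm2$.

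To identify the four scalars I would first predict their linear factors and then fix the overall constant. Since $\lambda$ is the eigenvalue of the composite $C$ on $V$, it must vanish at every parameter value where one of the two constituent shift operators fails to be an isomorphism of solution spaces — that is, where the shift runs into a resonance between local exponents of the Riemann scheme \eqref{RDF}. Each such degeneration is governed by a linear equation in $a,b,c,g$, and these linear forms are exactly the factors appearing in $Sv_{a-},Sv_{b-},Sv_{c-},Sv_{g-}$; the six factors for the $g$-shift reflect the larger shift $g\to g\pm2$. The remaining scalar constant is then pinned down by matching the leading coefficient of $Cu$ with that of $u$ for one Frobenius solution $u\sim x^{\rho}$ at a chosen singular point, using the normalizations prescribed above. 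Equivalently — and this is what the accompanying Maple files record — one simply composes the explicitly normalized operators tabulated in the next subsection, divides on the left by $D\!F=S\!E_3$, and reads off the $x$-free remainder.

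The conceptual steps are routine once irreducibility is granted; the real labour is the explicit identification of $\lambda$. Left division of the order-$4$ composite by the order-$3$ operator $D\!F$ is a large but purely mechanical computation, and the only genuine check is that the remainder indeed comes out free of $x$ and factors into the stated linear forms — a fact that the local-exponent analysis predicts but that is most safely confirmed symbolically. Finally, I would note that irreducibility of $D\!F$ fails only on a proper subvariety of parameter space, and since both sides of each S-value identity are polynomials in $(a,b,c,g)$, the equalities established generically persist everywhere by continuity.
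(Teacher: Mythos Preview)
Your proposal is correct and matches the paper's approach: the paper does not give a separate proof of this proposition, relying instead on the general framework of Section~\ref{GenShiftS} (where the fact that $P_{+}(e_{-})\circ P_{-}$ is a constant modulo $E$ is built into Definition~\ref{Svalues}) together with the explicit normalized operators listed in \S\ref{AppenSE3} and a Maple computation of the composite and its remainder modulo $D\!F$. Your Schur-lemma justification of constancy is exactly the argument underlying that definition, and your remark that the factorization into linear forms is only heuristically predicted by the local-exponent resonances and must be confirmed symbolically is honest and accurate.
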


\subsection{Shift operators of $S\!E_3$ }\label{AppenSE3}
For the shifts $a+,c+,g+$, the operators are expressed in $(x,\theta )$-form:
$$\begin{array}{l}
P=x^2P_{nn}(\theta )+xP_n(x)+P_0(\theta ),\\
Q=x^2Q_{nn}(\theta )+xQ_n(x)+Q_0(\theta ).\end{array}$$
For the shifts $a-,c-,g-$, the operators are expressed in $(x,\partial)$-form:
\[\def\arraystretch{1.1} \begin{array}{ll}
    P_{a-}:&   (x-1)^2\partial^2+  (x-1)R_1/x\partial  + R_1/x,\\
    P_{c-}:&     \partial^2    +R_1/(x(x-1))\partial  + R_0/(x(x-1)),\\
    P_{g-}:&  R_2\partial^2+R_3/(x(x-1))\partial  + R_2/(x(x-1)),\\
\end{array}\]
where $R_k$ are used symbolically for polynomials of degree $k$ in $x$ . Note that they have denominators.
\renewcommand{\labelenumi}{(\ref{AppenSE3}.\arabic{enumi})}
\begin{enumerate}
\item $[a+]\quad[e_1+1, e_2+2, e_6-1, e_7-2]$
  \[
  \hskip-36pt
  \begin{array}{lcp{12.8cm}} \hline\noalign{\smallskip}
   P_{nn}  &=& $(\theta +e_5)(\theta +e_6)$, \\
   P_{n}  &=& $-2\theta^2+(4+4a+7c+2g+b)\theta -6ca-6c^2-2cb-3cg-6c$, \\
   P_{0}  &=& $(\theta -e_1)(\theta -e_2)$,\\ 
   Q_{nn}  &=& $(\theta +e_5+1)(\theta +e_6)$, \\
   Q_{n}  &=& $-2\theta^2+(4+4a+7c+2g+b)\theta -6ca-6c^2-2cb-3cg-6c+2+2a+g$, \\
   Q_{0}  &=& $(\theta -e_1)(\theta -e_2-1)$. \\
\hline
\end{array} \]

\vskip0.5pc
\item $[c+]\quad[e_1+1, e_2+2, e_3+1, e_4+2, e_5-2, e_6-2, e_7-2]$
  \[
  \hskip-36pt
  \begin{array}{lcp{12.8cm}} \hline\noalign{\smallskip}
 P_{nn}  &=& $\theta^2+(-5-3a-6c-2g-3b)\theta  +10ca+10cb+3ag+7cg$ \\
  & &\quad $+8+10c^2+18c+2a^2+8a+2b^2+8b+g^2+6g+4ab+3bg$, \\
 P_{n}  &=&$-2\theta^2+(8+6a+3b+9c+3g)\theta  -6cb-14ca-8 $ \\
  & & \quad $-7cg-2bg-4ab-4ag-18c-10c^2-12a-6g-4b-4a^2-g^2$, \\
 P_{0}  &=& $(\theta -e_1)(\theta -e_2)$, \\
 Q_{nn}  &=& $P_{nn}[-1]$, \\
 Q_{n}  &=& $-2\theta^2+(3g+11+9c+3b+6a)\theta  -6cb-14ca-14$ \\
  & &\quad  $-7cg-2bg-4ab-4ag-24c-10c^2-16a-8g-6b-4a^2-g^2$, \\
 Q_{0}  &=& $(\theta -e_1)(\theta -e_2-1)$. \\
\hline
\end{array}\]


\item $[g{+}]\quad[e_6-2, e_7-2, e_2+2, e_4+2]$
  \[   \hskip-36pt
  \begin{array}{lcp{12.8cm}} \hline\noalign{\smallskip}
P_{nn} &=& $(k_1\theta +k_0)(\theta +e_5)$, \\
P_{n} &=& $(6g+12+4b+4a+4c)\theta^2 +(-6g^2-12ab-4b^2-16cb-24-8a^2-24cg$\\
    && \quad $-8bg-20ca-24g-46c-12c^2-16ag-32a-18b)\theta $ \\
    &&\quad  $+8cb^2+28cb+36ca+12cbg+20acg+16abc+36c^2$ \\
    &&\quad  $+8c^3+16bc^2+16c^2a+20c^2g+30cg+8ca^2+8cg^2+28c$, \\
P_{0} &=& $(m_1(\theta -1)+m_0)(\theta -e_2)$, \\
Q_{nn} &=& $(k_1(\theta -1)+k_0)(\theta +e_5+1)$, \\
Q_{n} &=&  $(6g+12+4b+4a+4c)\theta^2  +(-6g^2-12ab-4b^2-16cb-24-8a^2-24cg$\\
  &&\quad $-8bg-20ca-24g -46c-12c^2-16ag-32a-18b)\theta $ \\
  &&\quad $-20+8cb^2-28a+12cbg+26cg-8ab-12ag-4bg+8cg^2$ \\
  &&\quad $+28ca+20c-18g-8b-4g^2-8a^2+36c^2+28cb+16bc^2$\\
  &&\quad $+20c^2g+16abc+20acg+8c^3+8ca^2+16c^2a$, \\
Q_{0} &=& $(m_1\theta +m_0)(\theta -e_2-1)$, \\
\noalign{\smallskip}\hline\noalign{\smallskip}
     && $k_1=-2a-2c-2b-3g-6,\quad    k_0 = 4(a+b+c+g+2)^2+2c(g+1)-g-2$,\\  
     && $m_1=k_1,\quad    m_0 = -2a-2c-8-g^2-2bg-4b-6g$.
\end{array} \]

\item $[a{-}]\quad[e_1-1, e_2-2, e_6+1, e_7+2]$
  \[
  \hskip-36pt
  \begin{array}{lcp{10.7cm}} \hline\noalign{\smallskip}
P_{an} &=& $(x-1)^2 \partial^2  -(x-1)(3a+3b+4c+2g+2-(a+c)/x) \partial$ \\
  && $+ (2a+2b+2c+g+2)(a+b+2c+g+1-c/x)$,\\
Q_{an} &=& $ (x-1)^2 \partial^2  -(x-1)(1+2g+3b+4c+3a-(1+a+c)/x)\partial$ \\
&&  $+(2a+2b+2c+g+1)(a+b+2c+g+1)$\\
&& $-(c+1)(2a+2b+2c+g+2)/x +(a+c+1)/x^2$. \\
 \noalign{\smallskip}\hline
  \end{array} \]
  
\vskip0.5pc
\item $[c-]\quad[e_1-1, e_2-2, e_3-1, e_4-2, e_5+2, e_6+2, e_7+2]$
  \[  \hskip-36pt
  \begin{array}{lcp{10.7cm}} \hline\noalign{\smallskip}
    P_{c-}&=& $\partial^2-(xa+xb+2xc-a-c)/x/(x-1)\,\partial$\\
    && \quad $+c(2a+2+2b+2c+g)/((x-1)x)$, \\
 Q_{c-}&=& $\partial^2-(xb+2xc+2x+xa-a-1-c)/x/(x-1)\,\partial$ \\
&&\quad  $+ ((2+a+2c^2+2ca+4c+cg+b+2cb)x^2$ \\
 &&\quad $+(-2ca-2c^2-2a-4c-cg-2-2cb)x+c+1+a)/x^2/(x-1)^2$. \\
 \noalign{\smallskip}\hline
\end{array} \]

\vskip0.5pc
\item $[g-]\quad[e_6+2, e_7+2, e_2-2, e_4-2]$
  \[
  \hskip-36pt
  \begin{array}{lcp{10.7cm}} \hline\noalign{\smallskip}
  P_{g-}&=& $cp_2\partial ^2+cp_1/(x(x-1))\partial  + cp_0/(x(x-1))$, \\
  Q_{g-}&=& $cq_2\partial ^2+cq_1/(x(x-1))\partial  + cq_0/(x(x-1))^2$,  \\
 \noalign{\smallskip}\hline
\end{array} \]

  \[
  \hskip-36pt
  \begin{array}{lcl}
cp_2 &=& (2c+g)(g+a+b)x^2-(g+2b)(2c+g)x+(g+2b)(g+a+c), \\
cp_1 &=& -(2c+g)(g+a+b)(3a+2g+4c+3b+2)x^3 \\
&&\quad + (2c+g)((a-b)(2+3a -2b +3c)\\
&&\quad + (2b +g)(3+7a +2b+6c +3g))x^2\\
&&\quad  -(g+2b)((a - c) (a + b + 2 c) + (2 c + g) (1 + 3 a + 2 b + 4 c + g))x\\
&& \quad +(a+c)(g+a+c)(g+2b), \\
cp_0 &=& (2a+2+2b+2c+g)\ [(2c+g)(g+a+b)(a+2c+b+1+g)x^2 \\
  &&\quad -(2c+g)((a - b) (1 + a - b + c) - (2 b + g) (1 + 2 a + 2 c + g))x\\
  &&\quad  + c(g+2b)(g+a+c)], \\
%
cq_2 &=& (2c+g)(g+a+b)x^2-(g+2b)(2c+g)x+(g+2b)(g+a+c), \\
cq_1 &=& -(2c+g)(g+a+b)(3a+3b+4c+2g)x^3 +(2c+g)(9cb\\
&& \quad +3ca+6cg+6b^2+3g^2+3a^2-2a+8bg+2b+7ag+9ab)x^2 \\
&& \quad -(g+2b)(g^2+6cg+4g+3ag+2bg+4a+6c^2+3cb+7ca\\
&& \quad +4c+a^2+ab)x + (2+c+a)(g+a+c)(g+2b), \\
cq_0 &=& (2c+g)(g+a+b)(a+g+2c+b)(2a+1+g+2c+2b)x^4 \\
&&\quad -(2c+g)(-2a+10b^2g+14b^2c+6ac^2+10a^2c+8cb+4cg\\
&& \quad +4ab+3ag+5bg+8cg^2+8ag^2+8bg^2+4a^3+12ab^2+21cbg\\
&& \quad +2b+4b^3+2g^2+2g^3+20agb+4b^2+8c^2g+12a^2b+10bc^2\\
&& \quad +10a^2g+24abc+19acg)x^3 +(16a^2cg+16cab^2+8b^2g\\
&& \quad +10b^2c-2ac^2-2a^2c+20cag^2+4b^3c+23cbg^2+22cb^2g\\
&& \quad +32c^2ab+24c^2ag+36c^2bg+40acgb+4c^3a+2a^3g\\
&& \quad +15c^2g^2+16a^2cb+12cb+6cg+12ab+6ag+12bg+11cg^2\\
&& \quad +5ag^2+8bg^2+6ab^2+4ag^3+4ca^3+7cg^3+5b^2g^2+5a^2g^2\\
&& \quad +4bg^3+10c^3g+16c^3b+2b^3g +8c^2a^2+28cbg+g^4+6g^2\\
&& \quad +2g^3+14agb+10c^2g+6a^2b+22bc^2+2a^2g+24abc+10acg\\
&& \quad +20c^2b^2+10abg^2+6a^2bg+6ab^2g)x^2\\
&& \quad -(g+2b)(g+a+c)(4a+2ca+cg+g+2b+2c^2+6c+6+2cb)x \\
&&\quad +(2+c+a)(g+a+c)(g+2b).
\end{array}\]

\end{enumerate}

\subsection{Reducible cases of $S\!E_3$}\label{SE3Red}
When an S-value vanishes the equation is reducible. This leads to 

\begin{thm}\label{redcondSE3} Assume that one of 
\[a,\ b,\ c,\ a+g/2,\ b+g/2,\ c+g/2,\ a+b+c+g/2,\ a+b+c+g,\ (g+1)/2\]
is an integer. Then $S\!E_3$ is reducible. 
\end{thm}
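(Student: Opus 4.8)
The plan is to read reducibility off the explicit S-values in Proposition~\ref{SvalueSE3}, using the shift operators of Theorem~\ref{S13shiftopSE3} to move the parameters by integers. The first step is the principle announced just before the statement: if one of the four S-values vanishes at a parameter point, then $S\!E_3$ is reducible there. Taking $Sv_{a-}=P_{a+}(a-1)\circ P_{a-}$ as a model, this composition acts on the three-dimensional solution space of $S\!E_3(a)$ as multiplication by the scalar $Sv_{a-}$; when that scalar is $0$ the image of $P_{a-}$ lies in $\ker P_{a+}(a-1)$. Because each shift operator has order $2$ while $S\!E_3$ has order $3$, neither $P_{a-}$ nor $P_{a+}(a-1)$ can annihilate an entire three-dimensional solution space (otherwise the order-$3$ operator $S\!E_3$ would be a right factor of an order-$2$ operator). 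Hence $\ker P_{a-}$ is a proper nonzero subspace of $\operatorname{Sol}(S\!E_3(a))$ and $\operatorname{Im}P_{a-}$ is a proper nonzero subspace of $\operatorname{Sol}(S\!E_3(a-1))$. Since a shift operator has rational coefficients it commutes with analytic continuation, so both subspaces are monodromy invariant; therefore $S\!E_3(a)$ and $S\!E_3(a-1)$ are reducible. The same reasoning applies to $Sv_{b-}$, $Sv_{c-}$ and $Sv_{g-}$.

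Next I would factor the four S-values and read off the linear factors that can vanish: $a$, $b$, $c$; then $2a+g$, $2b+g$, $2c+g$; then $a+b+c+g+1$ and $a+b+c+g$; then $2a+2b+2c+g+2$; and finally $g-1$. Setting each to zero gives respectively $a=0$, $b=0$, $c=0$, $a+g/2=0$, $b+g/2=0$, $c+g/2=0$, $a+b+c+g\in\{-1,0\}$, $a+b+c+g/2=-1$ and $g=1$, which are precisely integer values of the nine forms listed in the theorem. The shifts of Theorem~\ref{S13shiftopSE3} move $a$, $b$, $c$ by $1$ and $g$ by $2$, so under the group they generate each of the forms $a$, $b$, $c$, $a+g/2$, $b+g/2$, $c+g/2$, $a+b+c+g/2$, $a+b+c+g$, $(g+1)/2$ changes by an integer. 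Consequently, if one of these forms is an integer, a finite chain of shifts carries the parameters to a point $\pi_0$ at which the matching factor, hence one of $Sv_{a-},Sv_{b-},Sv_{c-},Sv_{g-}$, vanishes; for example $g$ odd reaches $g=1$ through $g\to g-2$, and $a+g/2\in\mathbb Z$ reaches $a+g/2=0$ through combined $a$- and $g$-shifts. By the first step, $S\!E_3(\pi_0)$ is reducible.

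The last step is to transport reducibility from $\pi_0$ back to the original parameters along the shift chain. The key is that whenever an S-value $Sv_{X-}$ is nonzero the operator $P_{X-}$ is, by the scalar relation above, an isomorphism of solution spaces, and being a differential operator with rational coefficients it intertwines the monodromy representations at the two parameter points; hence reducibility is unchanged by the shift $X$. Traversing the finite chain from $\pi_0$ backwards, at every link either the connecting S-value is nonzero, so reducibility is carried one step, or it is zero, in which case the first step already makes that link's endpoints reducible. In either case the original equation $S\!E_3(a,b,c,g)$ inherits reducibility, which proves the theorem.

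I expect the decisive point to be this transport step rather than the factorizations. One must verify carefully that a shift operator with nonvanishing S-value is genuinely an isomorphism intertwining the monodromy---so that reducibility is a true invariant of the integer-shift orbit---and must check that links where an intermediate S-value happens to vanish never obstruct the argument but only supply reducibility directly. The bookkeeping that each of the nine forms, including those involving $g/2$ and $(g+1)/2$, can be driven to a zero under a lattice in which $g$ moves only by $2$ is routine once the parity is tracked.
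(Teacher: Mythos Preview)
Your proposal is correct and follows essentially the same approach as the paper: read off the linear factors of the four S-values in Proposition~\ref{SvalueSE3}, use the shift operators of Theorem~\ref{S13shiftopSE3} to move any integer value of one of the nine forms to a zero of a factor, and invoke the general principle (Proposition~\ref{Sred} in Section~7) that a vanishing S-value forces reducibility while a nonvanishing one gives a monodromy-equivariant isomorphism of solution spaces. The paper's own argument is the single sentence ``When an S-value vanishes the equation is reducible. This leads to'' preceding the theorem, with the transport mechanism deferred to the generalities in Section~7; you have simply spelled out those details explicitly.
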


It is known in \cite{Mim} that $S\!E_3$ is irreducible if 
none of the above quantities are not integers and 
$a+c+g/2,\ b+c+g/2\ \notin \frac12+\mathbb{Z}.$

\par\smallskip
We study factorizations of $S\!E_3$ when $a\in\mathbb{Z}$, especially when $a=-1,0$, and when $g\in 2\mathbb{Z}+1$, especially when $g=-1,1$. We observe the shift relations. For the other cases,   the situation is analogous and we skip them.
\subsubsection{$a\in\mathbb{Z}$}
  Writing $D\!F(-1)$ for $D\!F(a=-1)$, $P_{a-}(0)$ for $P_{a-}(a=0)$, etc, we have shift relations
$$(1)\ D\!F(-1)\circ P_{a-}(0)=Q_{a-}(0)\circ D\!F(0),\quad (2)\ D\!F(0)\circ P_{a+}(-1)=Q_{a+}(-1)\circ D\!F(-1),$$
which factorize respectively as
$$(1)\ [1,2]\ [1,1]=[1,1]\ [2,1],\qquad (2)\ [2,1]\ [2]=[2]\ [1,2].$$
\par\smallskip
The equations and the shift operators factor as
\[\begin{array}{ll}D\!F(-1)=Dn_1\circ Dn_2,\ &P_{a-}(0)=P_1\circ P_2,\\[1mm]
&Q_{a-}(0)=Q_1\circ Q_2,\ D\!F(0)=D0_1\circ D0_2,\end{array}\]
where
\[\begin{array}{ll}\def\arraystretch{1.4}\setlength\arraycolsep{2pt}
Dn_1&= (x-1) \partial -(2b+2c+g+1),\\[1mm] 
Dn_2&=x(\theta -2c)(\theta -b-2c-g)-(\theta -c)(\theta -2c-g),\\[1mm]
P_1&= (x-1)\partial - (b+2c+g+1) + c/x,\quad
P_2=Dn_1-1,\\[1mm]
Q_1&=Dn_1,\quad
Q_2=P_1+1/x,\\[1mm]
D0_1&=x(\theta -2c)(\theta -b-2c-g-1)-(\theta -c)(\theta -2c-g-1),\\[1mm]
D0_2&=P_2.\end{array}\]
When two operators $R_1$ and $R_2$ are related as 
$$R_1=x^{\mu_0}(x-1)^{\mu_1}\ R_2\circ x^{\nu_0}(x-1)^{\nu_1},$$ 
we write $R_1\sim R_2$. Since
$$D0_1\sim Q_{a+}(-1),\quad D0_2\sim Dn_1,\quad P_{a+}(-1)\sim Dn_2,$$
the relation (2) turns out to be a trivial identity. Since 
$$Dn_1\sim Q_1,\quad P_2\sim D0_2,$$
cancelling them from both sides of (2), we get a relation of type $$[2]\ [1]=[1]\ [2],$$
which is equivalent to a shift relation of the Gauss operator $E_2$.

The factorization above and Theorem \ref{onetwo_to_twoone} lead to 
\begin{prp}\label{SE3Redprop}
    The reducible types of $D\!F$ when $a\in \mathbb{Z}$ are 
    $$\begin{array}{ccccccc}
      a=\  &\ -k&\ -2&\ -1&\ 0&\ 1&\ k\\
  &\ [12]&\ [12]&\ [12]A0&\ [21]A0&\ [21]&\ [21]
    \end{array},$$
    where $[12]$ means the equation factors as $F_1\circ F_2\ ({\rm order}(F_1)=1,{\rm order}(F_2)=2$, and $A0$ means the factors have no singularities out of $\{0,1,\infty\}$.  \end{prp}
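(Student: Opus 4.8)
The plan is to take $a=-1$ and $a=0$ as base cases, read off their factorization types and the A0 property from the explicit factorizations already displayed, and then propagate to every integer $a$ by induction along the $a$-shift, using the shift relations together with Theorem \ref{onetwo_to_twoone}. (That $D\!F$ is reducible for every integer $a$ is already Theorem \ref{redcondSE3}; the present task is only to pin down the type of each factorization.) The displayed factorization $D\!F(-1)=Dn_1\circ Dn_2$ has $\mathrm{ord}(Dn_1)=1$ and $\mathrm{ord}(Dn_2)=2$, hence type $[12]$, while $D\!F(0)=D0_1\circ D0_2$ has $\mathrm{ord}(D0_1)=2$ and $\mathrm{ord}(D0_2)=1$, hence type $[21]$. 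For the A0 refinement I would inspect the leading coefficients of the four explicit factors: $Dn_1=(x-1)\partial-(\cdots)$ is singular only at $x=1$, and a short computation using $\theta=x\partial$ gives $Dn_2$ the leading coefficient $x^2(x-1)$; in every case the leading coefficient is a product of powers of $x$ and $x-1$, so no apparent singularity is created and A0 holds at $a=-1$ and $a=0$.

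For the induction I would use the general $a$-shift relation $D\!F(a-1)\circ P_{a-}(a)=Q_{a-}(a)\circ D\!F(a)$ and its $a+$ counterpart: feeding a known factorization of $D\!F(a)$ into the intertwining relation and applying Theorem \ref{onetwo_to_twoone} produces a factorization of the neighbouring operator. Starting from the $[21]$ factorization at $a=0$ and iterating the $a+$ shift upward yields type $[21]$ for all $a\ge 0$, and starting from the $[12]$ factorization at $a=-1$ and iterating the $a-$ shift downward yields type $[12]$ for all $a\le -1$. The single transition between the two types is exactly relation (1), whose operator-type identity $[1,2][1,1]=[1,1][2,1]$ records the flip from $[12]$ at $a=-1$ to $[21]$ at $a=0$; everywhere else the relevant type identity is type-preserving, so no further flip occurs. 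That the flip is pinned to $a=-1\leftrightarrow a=0$ should ultimately be traced to the down-shift $S$-value $Sv_{a-}$ of Proposition \ref{SvalueSE3}, whose factor $a$ vanishes at $a=0$ --- precisely the degeneration that makes relation (2) trivial and selects the A0 factorization there.

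Next I would account for the loss of A0 away from $a=-1,0$. As the remark records, the shift operators $P_{a\pm},Q_{a\pm}$ carry denominators $x,\,x-1,\,x(x-1)$, so each application of the intertwining relation multiplies the induced factors by these polynomials. Consequently, for $a\le -2$ or $a\ge 1$ the induced factors acquire zeros of their leading coefficients off $\{0,1,\infty\}$ --- that is, apparent singularities --- and only the bare types $[12]$ and $[21]$ survive, in agreement with the table.

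The main obstacle I anticipate is the inductive step itself: verifying that the type-preserving branch of Theorem \ref{onetwo_to_twoone} applies at every step except the single transition, rather than flipping $[12]\leftrightarrow[21]$ repeatedly. Concretely, one must check that the orders of the left and right factors of $P_{a\pm}$ and $Q_{a\pm}$ are arranged so that the theorem's hypotheses keep the order-$1$ factor on the same side throughout the $[12]$-chain (and likewise for the $[21]$-chain), with the order switch forced only across relation (1). Pinning down why the flip happens once and only once --- and not at every integer step --- is the delicate point.
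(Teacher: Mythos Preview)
Your approach is the paper's: read off the base cases $a=-1$ (type $[12]$) and $a=0$ (type $[21]$) from the displayed factorizations, then invoke Theorem~\ref{onetwo_to_twoone}. Two points need correcting.

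First, your ``main obstacle'' is not an obstacle. Theorem~\ref{onetwo_to_twoone} is not a step-by-step induction whose type-preservation must be re-verified at each integer; it is a one-shot dichotomy that settles the entire chain. Since $D\!F(-1)$ is $[12]$ and $D\!F(0)$ is $[21]$, case~2 of the theorem applied with $sh_+:a\mapsto a+1$ at $\epsilon=(a=-1)$ forces $[21]$ for every $a\ge0$ at once. For the backward direction, apply the theorem with $sh_+:a\mapsto a-1$ at $\epsilon=(a=-1)$: from Proposition~\ref{SvalueSE3} one has $Sv_{a-}(-1)=-(g-2)(b+c+g)(2b+2c+g)$, generically nonzero, which rules out case~2 and places us in case~1, giving $[12]$ for every $a\le-1$. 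There is nothing delicate to pin down --- the single flip and its location are exactly the content of the theorem.

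Second, your explanation for the loss of A0 is wrong. The denominators you cite, namely $x$, $x-1$, $x(x-1)$, vanish only at $0$ and $1$; multiplying or dividing by them cannot create singular points outside $\{0,1,\infty\}$. The apparent singularities that appear for $a\le-2$ or $a\ge1$ arise instead from the leading coefficients of the transported factors: when the factorization is pushed along the shift, the order-$2$ factor acquires a leading polynomial with zeros away from $\{0,1\}$ (necessarily apparent, by Proposition~\ref{apparentsing}). The paper does not spell out this mechanism either --- it simply asserts the table --- but the mechanism you propose is not the right one.
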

\subsubsection{$g\in2\mathbb{Z}+1$}
Writing $D\!F(-1)$ for $D\!F(g=-1)$, $P_{g-}(1)$ for  $P_{g-}(g=1)$, etc, we have shift relations
$$(1)\ D\!F(-1)\circ P_{g-}(1)=Q_{g-}(1)\circ D\!F(1),\quad (2)\ D\!F(1)\circ P_{g+}(-1)=Q_{g+}(-1)\circ D\!F(-1),$$
which factorize as
$$(1)\ [1,2]\ [1,1]=[1,1]\ [2,1],\qquad (2)\ [2,1]\ [2]=[2]\ [1,2].$$
The equations and the shift operators factor as
\[\begin{array}{ll}D\!F(-1)=Dn_1\circ Dn_2,\ &P_{g-}(1)=P_1\circ P_2,\\[1mm]
&Q_{g-}(1)=Q_1\circ Q_2,\ D\!F(1)=D1_1\circ D1_2,\end{array}\]
where
\[\begin{array}{ll}\def\arraystretch{1.4}\setlength\arraycolsep{2pt}
Dn_1&=x^2(x-1)^2\partial-x(x-1)(xa+xb+2xc-a-c-2x+1)
,\\[2mm]
Dn_2&=\partial^2-\displaystyle 2\frac{xa+xb+2xc-a-c}{x(x-1)}\partial+2c\frac{2a+1+2b+2c}{x(x-1)},
\\[2mm]
P_1&=R\partial-\displaystyle\frac{{\rm Poly}_3}{x(x-1)}
,\quad
P_2=\partial-\frac{xa+xb+2xc-a-c+2x-1}{x(x-1)}
,\\[2mm]
Q_1&=R\partial-\displaystyle\frac{{\rm Poly}_3}{x(x-1)},\quad
Q_2=\partial-\frac{{\rm Poly}_3}{x(x-1)R}
,\\[2mm]
D1_1&=x^2(x-1)^2\partial^2-2x(x-1)(xa+xb+2xc-a-c)\partial\\[2mm]
&+2((2c - 1)(a + b + c + 1)x^2+(-2ac - 2bc - 2c^2 + 2a - c + 1)x-1 - c - a)
,\\[2mm]
D1_2&=\partial-\displaystyle\frac{xa+xb+2xc-a-c+2x-1}{x(x-1)}
,\end{array}\]
where
$$R:=(2c + 1)(a + b + 1)x^2-(2c + 1)(2b + 1)x+(2b + 1)(1 + c + a),$$
and ${\rm Poly}_3$ stands symbolically for a polynomial of degree three of $x$.

Since
$$D1_1\sim Q_{g+}(-1),\quad D1_2\sim Dn_1,\quad P_{g+}(-1)\sim Dn_2,$$
the relation (2) turns out to be a trivial identity. We have 
$$ P_2=D1_2,$$
but $Dn_1\not\sim Q_1.$ Cancelling $P_2$ and $D1_2$ from both sides of (2), we get a mysterious relation of type $[1,2]\ [1]=[1,1]\ [2]:$
$$Dn_1\circ Dn_2\circ P_1= Q_1\circ Q_2\circ D1_1.$$
\begin{prp}    The reducible types of $D\!F$ when $g\in 2\mathbb{Z}+1$ are 
    $$\begin{array}{ccccccc}
      g=\  &\ -2k-1&\ -3&\ -1&\ 1&\ 3&\ 2k+1\\
  &\ [12]&\ [12]&\ [12]A0&\ [21]A0&\ [21]&\ [21],
    \end{array}$$
  \end{prp}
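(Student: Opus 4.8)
The plan is to reduce the whole table to the two ``seed'' values $g=-1$ and $g=1$ together with a propagation argument driven by the shift operators $P_{g\pm},Q_{g\pm}$. Reducibility itself is not the issue: for every odd $g$ the quantity $(g+1)/2$ is an integer, so Theorem \ref{redcondSE3} already guarantees that $S\!E_3$ (equivalently $D\!F$) is reducible throughout $g\in2\mathbb{Z}+1$. The genuine content of the proposition is the \emph{type} of the factorization --- order $1$ then $2$, written $[12]$, versus order $2$ then $1$, written $[21]$ --- together with the $A0$ refinement, and this is exactly what the displayed explicit factorizations are meant to supply.

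First I would settle the two base cases directly. From $D\!F(-1)=Dn_1\circ Dn_2$ with $\mathrm{ord}(Dn_1)=1$ and $\mathrm{ord}(Dn_2)=2$ one reads off type $[12]$ at $g=-1$; similarly $D\!F(1)=D1_1\circ D1_2$ with $\mathrm{ord}(D1_1)=2$, $\mathrm{ord}(D1_2)=1$ gives type $[21]$ at $g=1$. To obtain the $A0$ labels I would inspect the leading coefficients of the individual factors: each of $Dn_1,Dn_2,D1_1,D1_2$ has leading coefficient a monomial in $x(x-1)$ only, so the factors acquire no apparent singularity outside $\{0,1,\infty\}$, giving $[12]A0$ at $g=-1$ and $[21]A0$ at $g=1$.

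Next I would propagate these two seeds outward along the $g$-lattice using the shift relations (1) and (2) and Theorem \ref{onetwo_to_twoone}. The gauge equivalences $D1_1\sim Q_{g+}(-1)$, $D1_2\sim Dn_1$ and $P_{g+}(-1)\sim Dn_2$ collapse relation (2) to a trivial identity, so the decisive input is relation (1). After cancelling the common pair $P_2=D1_2$ it reduces to the ``mysterious'' relation
$$Dn_1\circ Dn_2\circ P_1=Q_1\circ Q_2\circ D1_1,$$
of type $[1,2]\,[1]=[1,1]\,[2]$. Feeding this, together with the order bookkeeping of the shift operators, into Theorem \ref{onetwo_to_twoone} should show that applying $g-$ to $D\!F(-1)$ preserves the $[12]$ type (yielding $g=-3,-5,\dots$) and that applying $g+$ to $D\!F(1)$ preserves the $[21]$ type (yielding $g=3,5,\dots$), while the appearance of an extra apparent singularity in the factors after the first shift (forced by the denominators of the shift operators) accounts for the loss of the $A0$ label once $|g|\ge 3$.

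I expect the main obstacle to be precisely this last propagation step. In the $a$-case the analogous computation simplified because $Dn_1\sim Q_1$, which permitted cancelling a further pair and landing on a plain Gauss shift relation $[2]\,[1]=[1]\,[2]$; here the text explicitly records that $Dn_1\not\sim Q_1$, so the bridge between the $[12]$ tower and the $[21]$ tower is the genuinely non-degenerate relation displayed above and cannot be collapsed to a second-order shift identity. Making Theorem \ref{onetwo_to_twoone} applicable therefore requires carefully tracking which of the two factors inherits each shift and verifying that no cancellation of the order-$2$ block against an order-$1$ operator occurs along the way. Confirming that the strange head $R$ of $P_{g-}$ does not degenerate in $x$ for the relevant $g$-values --- i.e.\ that its leading coefficient $(2c+g)(g+a+b)$ stays nonzero --- is the one place where a short residual computation seems unavoidable.
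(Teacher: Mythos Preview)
Your overall plan coincides with the paper's: the explicit factorizations at $g=\pm1$ supply the seeds $[12]A0$ and $[21]A0$, and Theorem~\ref{onetwo_to_twoone} handles the propagation. But you misidentify what feeds into that theorem. Relations (1), (2) and the ``mysterious relation'' $Dn_1\circ Dn_2\circ P_1=Q_1\circ Q_2\circ D1_1$ concern only the single transition between $g=1$ and $g=-1$; they are observations, not the engine that carries the types out to $g=\pm3,\pm5,\ldots$. Theorem~\ref{onetwo_to_twoone} requires only the reducibility type at two consecutive lattice points: starting from $D\!F(-1)$ of type $[12]$ and shifting by $g{+}$ to $D\!F(1)$ of type $[21]$ puts you in case~2, which immediately yields $[21]$ for all $g\ge1$.

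For the negative side your argument has a genuine gap: you never explain how you know the type of $D\!F(-3)$, and the mysterious relation says nothing about it. The missing ingredient is the S-value. In $Sv_{g-}=(g-1)(2a+g)(2b+g)(2c+g)(a{+}b{+}c{+}g{+}1)(a{+}b{+}c{+}g)(2a{+}2b{+}2c{+}g{+}2)$ the only factor that can vanish at odd $g$ with $a,b,c$ generic is $g-1$, so $Sv_{g-}(-1)\ne0$; Proposition~\ref{FactorType} then forces $D\!F(-3)$ to share the type $[12]$ with $D\!F(-1)$, and case~1 of Theorem~\ref{onetwo_to_twoone} (with $sh_+=g{-}$) gives $[12]$ for all $g\le-1$. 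Your concern about the head $(2c+g)(a+b+g)$ of $P_{g-}$ degenerating is a red herring for generic $a,b,c$.
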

  \section{Equation $Z_3$}\label{E3Z3}\secttoc
In this section, we introduce another codimension-2 specialization of $E_3$. We find for this equation  shift operators for four independent shifts,  S-values, and we study the reducible cases.  
\subsection{Definition of $Z_3$}
The equation $Z_3$ is defined as a specialization of $E_3$
by the condition (a system of two equations)
  $$e_1+e_2+e_5=e_3+e_4+e_5=1.$$
Parameterize the 4 ($=6-2$) exponents by $A_0,A_1,A_2,A_3$ as
$$\left(\begin{array}{lccc}x=0:&0&e_1&e_2\\x=1:&0&e_3&e_4\\x=\infty:&e_5&e_6&e_7\end{array}\right)=
  \left(\begin{array}{ccc}0& - A_0 - A_2& A_0 - A_2\\0& -A_1 - A_2& A_1 - A_2\\
    2A_2 + 1& A_3 + A_2 + 1& -A_3 + A_2 + 1\end{array}\right).
$$
    \begin{remark}
      In \cite{Z12}, we encountered the Fuchsian equation $Z(A)=Z(A_0,\dots,A_3)$ of order 4. Define the equation $Z_4(A,k)$ by $\partial^{-k}\circ Z(A)\circ \partial^k$. Then  $Z_4(A,k=A_2+1/2)$ is reducible of type [1,3], and the second factor is our equation $Z_3$.
      \end{remark}
    The equation $$Z_3=x^2(x-1)^2\partial^3+p_2\partial^2+p_1\partial+p_0$$
    is written  as 
\[\def\arraystretch{1.2}\setlength\arraycolsep{2pt} \begin{array}{rcl}
  p_2&=&x(2x - 1)(x - 1)(3 + 2A_2),\\
  p_1&=&(5A_2^2 - A_3^2 + 12A_2 + 7)x^2+(A_0^2 - A_1^2 - 5A_2^2 + A_3^2 - 12A_2 - 7)x\\
  &&\quad -(A_0 + 1 + A_2)(A_0 - 1 - A_2),\\
  p_0&=&(2A_2 + 1)(-A_3 + A_2 + 1)(A_3 + A_2 + 1)x\\
  &&\quad +(2A_2 + 1)(A_0^2 - A_1^2 - A_2^2 + A_3^2 - 2A_2 - 1)/2.\end{array}
\setlength\arraycolsep{3pt}\]
The accessory parameter $a_{00}$ defined in \eqref{accpara00}
is equal to the constant term of $p_0$:
$$a_{00}=(2A_2 + 1)(A_0^2 - A_1^2 - A_2^2 + A_3^2 - 2A_2 - 1)/2.$$

\subsection{Shift operators of $Z_3$}

\begin{thm}\label{Z3shiftop}
  The equation $Z_3$ admits a shift operator for every even   shift
$$(A_0,\dots,A_3)\to (A_0+n_0,\dots,A_3+n_3)\quad n_0+n_1+n_2+n_3\in 2\mathbb{Z}.$$
\end{thm}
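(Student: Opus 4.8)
The plan is to reduce the theorem to the construction of shift operators for a finite generating set, by exploiting the group structure of shifts. The even shifts $n=(n_0,\dots,n_3)$ with $n_0+\cdots+n_3\in 2\mathbb{Z}$ form a sublattice $L\subset\mathbb{Z}^4$ of index $2$, namely the $D_4$ root lattice; it is generated by $\alpha_1=(1,-1,0,0)$, $\alpha_2=(0,1,-1,0)$, $\alpha_3=(0,0,1,-1)$ and $\alpha_4=(0,0,1,1)$. Shift operators compose: if $P_\sigma$ intertwines $Z_3(A)$ and $Z_3(\sigma A)$ through $Z_3(\sigma A)\circ P_\sigma=Q_\sigma\circ Z_3(A)$, and $P_\tau$ does the same for $\tau$, then $P_\sigma(\tau A)\circ P_\tau(A)$ intertwines $Z_3(A)$ and $Z_3((\sigma+\tau)A)$, with companion operator $Q_\sigma(\tau A)\circ Q_\tau(A)$. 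Hence it suffices to produce shift operators for each generator $\alpha_i$ and for each $-\alpha_i$; every element of $L$ is then obtained by composition.

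For each generator I would build the operator from an ansatz tied to the intertwining relation, as was done for $S\!E_3$. Write the sought operator as a second-order operator $P=r_2\partial^2+r_1\partial+r_0$, whose coefficients $r_k$ are polynomials, or rational functions with denominators confined to powers of $x$ and $x-1$ (the remarks on $S\!E_3$ warn that such denominators must be permitted). Imposing $Z_3(\sigma A)\circ P=Q_\sigma\circ Z_3(A)$ for a second-order companion $Q_\sigma$ amounts to requiring that $Z_3(A)$ be a right divisor of the order-five operator $Z_3(\sigma A)\circ P$; performing the operator multiplication and right division (in practice with the {\sl DEtools} package) turns the vanishing of the remainder into a finite linear system in the undetermined coefficients of $P$ and $Q_\sigma$. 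Exhibiting a solution is the substance of this step, and it is here that the special form of $Z_3$ — the coefficients $p_0,p_1,p_2$ forced by the codimension-$2$ condition $e_1+e_2+e_5=e_3+e_4+e_5=1$ — enters.

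Inverses can be obtained structurally rather than by fresh computation. Taking the formal adjoint of an intertwining relation and applying the adjoint symmetry of Proposition~\ref{symmE3} converts a shift operator for $\sigma$ into one for a reflected shift, which yields the backward operators $P_{-\alpha_i}$; the $(x\to 1-x)$ and $(x\to 1/x)$ symmetries similarly permute the generators and cut down the number of independent constructions. As a consistency check, one forms the S-values $P_{\alpha_i}(A-\alpha_i)\circ P_{-\alpha_i}(A)$ in the manner of Proposition~\ref{SvalueSE3}; since $Z_3$ is generically irreducible, each such composition collapses to a scalar polynomial in $A$, confirming that both directions exist and are mutually inverse up to that scalar.

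The step I expect to be the main obstacle is the consistency of the linear systems for the generator operators — proving that the second-order ansatz really admits a solution for each $\pm\alpha_i$ — since a priori nothing guarantees this, and the argument must also explain why only even shifts succeed. A cleaner structural route, which would simultaneously account for the even-sum condition, is to descend shift operators from the order-four equation $Z(A)$ of \cite{Z12}: because $Z_4(A,\,k=A_2+\tfrac12)$ factors as type $[1,3]$ with $Z_3$ as its second factor, shift operators of $Z(A)$ should push through the factorization to $Z_3$, and the half-integer value $k=A_2+\tfrac12$ is precisely what forces $n_0+\cdots+n_3$ to be even.
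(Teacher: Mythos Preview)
Your approach is essentially the paper's: reduce to a generating set of the $D_4$ lattice by composition, use the sign-change and $x\mapsto 1-x$ symmetries to cut the number of cases, and then exhibit explicit second-order operators for the remaining shifts by solving the intertwining relation. The paper's ``proof'' is exactly the explicit list in Proposition~\ref{shiftopZ3} (six shifts, extended by the stated symmetries to all of $D_4$), so what you flag as the main obstacle --- actually solving the linear systems and writing down the operators --- is the entire content; the paper simply performs this computation and records the answers, and your alternative descent from $Z_4$ is not pursued.
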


The equation $Z_3$  does not have the full symmetry
relative to the parameters $\{A_0,\dots,A_3\}$ that the equation $Z_4$ has.  
Let $P_{(n_0,n_1,n_2,n_3)}$ denote the shift operator for the even shift above.
For $j=0,1,3$, the shift operator for $A_j\to A_j+1$
and that for $A_j\to A_j-1$ are related by changing the sign of $A_j$:
$$P_{(\epsilon_0n_0,\epsilon_1n_1,n_2,\epsilon_3n_3)}=P_{(n_0,n_1,n_2,n_3)}(\epsilon_0A_0,\epsilon_1A_1,A_2,\epsilon_3A_3),\quad n_j=\pm1\ (j=0,1,3).
$$
  Moreover,
  \[\def\arraystretch{1.1}\begin{array}{ll}
    P_{(0101)}(x,A)&=-P_{(1001)}(x',A'), \\
    P_{(01\bar10)}(x,A)&=P_{(10\bar10)}(x',A'),\\
    P_{(0110)}(x,A)&=-P_{(1010)}(x',A')
  \end{array}
  \]
  hold,  where $(x,A)=(x,A_0,A_1,A_2,A_3)$, $(x',A')=(1-x,A_1,A_0,A_2,A_3)$,
  $\bar1=-1$, and $(00\bar11)$
  stands for $(A_0,A_1,A_2,A_3)\to(A_0,A_1,A_2-1,A_3+1)$, etc.
  
Up to this symmetry, we give shift operators for the six shifts
$$(1100),\ (1001),\ (1010),\ (10\bar10),\ (0011),\ (00\bar11).$$

\begin{prp}\label{shiftopZ3}
Let the shift operators $(P_\sigma,Q_\sigma)$, for a shift $\sigma$, are given as
  $$P_\sigma =p_2\partial^2+p_1\partial +p_0,\quad
  Q_\sigma =q_2\partial^2+q_1\partial +q_0,\quad p_2=q_2.$$
The coefficients
  $$\sigma: p_2,\ p_1,\ p_0;\quad p_1-q_1,\ p_0-q_0$$
are given as follows:
\[\def\arraystretch{1.1}\begin{array}{ll}    
    (1100):& p_2=x(x-1),\\
    &p_1=-(A_0+A_1-2A_2-2)x+A_0-A_2-1,\\
    &p_0=(A_0^2+A_1^2+A_2^2-A_3^2+1)/2+A_0A_1-A_2(A_0+A_1-1),\qquad \\
    &p_1-q_1=2x-1,\\& p_0-q_0=2A_2-A_0-A_1.\\
  (1001):&p_2=x(x-1)^2,\\
    &p_1=(x-1)((3A_2+A_3+3)x+A_0-A_2-1),\\
  &p_0 =(2A_2+1)(A_2+A_3+1)x +(A_0^2-A_1^2-3A_2^2\\
  &\quad +A_3^2+2A_0A_2-2A_2A_3+2A_0A_3+2A_0-4A_2-1)/2, \\
  &p_1-q_1=-(x-1)(x+1),\\& p_0-q_0=-(2A_2+1)x-A_0-A_3-1.\\
(1010):&p_2=x-1,\\
  &p_1=((-A_0^2+A_1^2+5A_2^2-A_3^2+4A_0A_2+4A_0+10A_2+5)x\\
  & \quad -2(A_0+A_2+1)(A_2-A_0+1))/(2x(A_0+1+A_2)),\\
&p_0=-((A_0^2-A_1^2-A_2^2+A_3^2-2A_2-1)(2A_2+1))/(2x(A_0+1+A_2)),\\
&p_1-q_1=(x-2)/x,\\& p_0-q_0=
(2A_2x+A_0-A_2+x+1)/x^2.\\
(10\bar10):&p_2= x^2(x-1)^2,\\
&p_1=x(x-1)(2A_2x+3x-2),\\
& p_0= -(A_2+1+A_3)(A_3-1-A_2)x^2 +(A_0^2-A_1^2+A_2^2+A_3^2\\
& \quad -2A_2A_0+A_0-3A_2-1)x -(A_2-A_0)(A_2-A_0-1), \\
&p_1-q_1=0,\\& p_0-q_0= -2(x-1)(A_0+1-A_2).\\
\end{array}\]  

\[\def\arraystretch{1.1}\begin{array}{ll}    
(0011):&p_2=x(A_3+A_2+1)(x-1),\\
&p_1= (A_3+3A_2+3)(A_2+1+A_3)x \\
& \quad - (A_0^2-A_1^2+3A_2^2+A_3^2+4A_2A_3+6A_2+4A_3+3)/2, \\
&p_0=((2A_2+1)(A_3+A_2+1))/2;\\
&p_1-q_1=-(2x-1)(A_3+A_2+1),\\& p_0-q_0=
-(3A_2)/2-A_3/2-3/2.\\
%
(00\bar11):&
p_2=x^2(x-1)^2,\\
&p_1=x(2x-1)(x-1)(2A_2+1),\\
&p_0=-(A_2+1+A_3)(A_3-3A_2)x^2+(A_0^2-A_1^2-3A_2^2+A_3^2\\
&\quad -2A_2A_3-3A_2+A_3)x+(A_2-A_0)(A_2+A_0), \\
&p_1-q_1=0,\\& p_0-q_0=
-2x(x-1)(A_2-A_3-1).
\end{array} \]
\end{prp}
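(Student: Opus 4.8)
The plan is to verify, for each of the six listed shifts $\sigma$, that the displayed pair $(P_\sigma,Q_\sigma)$ satisfies the defining intertwining relation of a shift operator for $Z_3$, namely
\[ Z_3(A+\sigma)\circ P_\sigma \;=\; Q_\sigma\circ Z_3(A), \]
where $Z_3(A+\sigma)$ is obtained from the explicit form $x^2(x-1)^2\partial^3+p_2\partial^2+p_1\partial+p_0$ by replacing $(A_0,\dots,A_3)$ with the shifted parameters. Since all coefficients of $Z_3$ are explicit polynomials in $x$ and in $A$, I would check that the order-$5$ operator $Z_3(A+\sigma)\circ P_\sigma$ is right-divisible by the order-$3$ operator $Z_3(A)$, that the remainder (of order $\le 2$) vanishes, and that the left quotient coincides with the printed $Q_\sigma$. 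Comparing heads already confirms the built-in constraint $q_2=p_2$: the leading coefficient of the product is $x^2(x-1)^2\,p_2$, and dividing by the leading coefficient $x^2(x-1)^2$ of $Z_3(A)$ returns $p_2$ as the head of the quotient.

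Concretely, for each $\sigma$ I would substitute the two defining relations of $Z_3$ (in the $A$-parameterization, the relations $e_1+e_2+e_5=e_3+e_4+e_5=1$), expand $Z_3(A+\sigma)\circ P_\sigma-Q_\sigma\circ Z_3(A)$ with the non-commutative rule $\partial\circ f=f\partial+f'$, and collect coefficients of $\partial^0,\dots,\partial^5$. Each coefficient is a polynomial in $x$ whose vanishing is a finite list of scalar identities in $(A_0,\dots,A_3)$, so the whole verification is a mechanical computation, exactly the kind carried out with the {\sl DEtools} package mentioned in the Introduction. I would stress that consistency here is genuinely special: as recorded earlier, $E_3$ admits no shift operator for generic $e$, so it is only after imposing the two codimension-$2$ relations that the over-determined system becomes solvable and pins down $(P_\sigma,Q_\sigma)$ up to scale.

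To keep the number of independent computations to six I would use the symmetry relations recorded just before the statement. The sign-change rule for $A_0,A_1,A_3$ turns every $A_j\to A_j-1$ operator into the corresponding $A_j\to A_j+1$ one, while the $x\to1-x$ relations identify $(0101)$ with $(1001)$, $(0110)$ with $(1010)$, and $(01\bar10)$ with $(10\bar10)$; because $A_2$ carries no sign symmetry, its two directions are covered by listing both $(1010),(10\bar10)$ and $(0011),(00\bar11)$. After the six representatives are verified, all remaining even shifts follow by composition, which is what Theorem \ref{Z3shiftop} then records. The shift $(1010)$ needs extra care: its $p_1,p_0$ carry the denominators $x$ and $A_0+1+A_2$, so I would verify the intertwining identity as an equality of operators with rational-function coefficients, equivalently a polynomial identity after clearing those denominators, noting that the factor $A_0+1+A_2$ is precisely the exponent combination whose integrality forces reducibility, which is why (cf.\ the remark comparing denominators that points to $P_{(1010)}$) a genuine denominator cannot be avoided.

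The main obstacle I expect is not a single conceptual hurdle but the control of the rational cases together with the bookkeeping: one must confirm that, after imposing the $Z_3$-relations, the full order-$5$ remainder vanishes identically in both $x$ and $(A_0,\dots,A_3)$, that the denominators $x$ and $A_0+1+A_2$ in $P_{(1010)}$ cancel cleanly so that the intertwiner is an honest operator on the solution space, and that the quotient is the printed $Q_\sigma$ rather than merely some admissible $Q$. At the conceptual level the subtle point is fixing in advance the minimal degrees of the ansatz for $p_1,p_0,q_1,q_0$, since choosing these correctly is what makes the linear system the right one; once the shape is settled the remaining work is routine. A natural cross-check, rather than an independent proof, is to derive these operators from the shift operators of the order-$4$ equation $Z(A)$ of \cite{Z12} through the $[1,3]$ factorization of $Z_4(A,k=A_2+1/2)$ noted in the remark, matching its second factor with $Z_3$.
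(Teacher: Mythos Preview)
Your proposal is correct and matches the paper's approach: the proposition is stated without proof because the verification is a direct computer check of the shift relation $Z_3(A+\sigma)\circ P_\sigma=Q_\sigma\circ Z_3(A)$ using the {\sl DEtools} package, exactly as you describe, and the reduction to six representative shifts via the sign and $x\mapsto1-x$ symmetries is precisely the argument given just before the proposition. One small aside: your explanation that the denominator $A_0+A_2+1$ in $P_{(1010)}$ is a reducibility condition is not quite right (compare Theorem~\ref{Z3redcond}); it is rather the exponent difference $1-e_1$ at $x=0$, but this does not affect the validity of the verification.
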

Note that the shift operator for the shift $(1010)$ has poles.

\subsection{S-values and reducibility conditions of $Z_3$}
\begin{prp}For the shift operator $P_\sigma$, write the S-value
  $P_{-\sigma}(\sigma A)\circ P_\sigma$ mod $Z_3$ by $Sv_\sigma$.
  The S-values for the six shifts above are given as follows:
\[\setlength{\arraycolsep}{1pt}
  \def\arraystretch{1.1}\begin{array}{ll} 
    Sv_{(1100)}&=(A_{01\bar2\bar3}+1)(A_{01\bar23}+1)(A_{012\bar3}+1)(A_{0123}+1),\\
    
    Sv_{(1001)}&=(A_{01\bar23}+1)(A_{0\bar123}+1)(A_{0123}+1)(A_{0\bar1\bar23}+1),\\
   
    Sv_{(1010)}&=(2A_2+1)(A_{0123}+1)(A_{0\bar12\bar3}+1)(A_{0\bar123}+1)(A_{012\bar3}+1)
       /(A_0+A_2+1),\\

    Sv_{(10\bar10)}&=(2A_2-1)(A_{0\bar1\bar2\bar3}+1)(A_{01\bar2\bar3}+1)(A_{0\bar1\bar23}+1)(A_{01\bar23}+1)
     /(A_0-A_2+1),\\    
    Sv_{(0011)}&=(2A_2+1)(A_{0\bar1\bar2\bar3}-1)(A_{01\bar2\bar3}-1)(A_{0\bar123}+1)(A_{0123}+1)
     /(A_3+A_2+1),\\
    Sv_{(00\bar11)}&=(2A_2-1)(A_{01\bar23}+1)(A_{012\bar3}-1)(A_{0\bar1\bar23}+1)(A_{0\bar12\bar3}-1)
     /(A_2-A_3-1),
  \end{array} \setlength{\arraycolsep}{3pt}
  \]
where $A_{01\bar2\bar3}=A_0+A_1-A_2-A_3$, and so on.
\end{prp}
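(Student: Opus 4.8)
The plan is to exploit the defining intertwining property of the shift operators together with the rigidity of $Z_3$, and to pin down the resulting scalar through its zero locus rather than by grinding the full noncommutative division.

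First I would record the structural fact that the reduction is a scalar at all. By construction (Theorem~\ref{Z3shiftop} and the intertwining relation inherited from \cite{HOSY1}), $P_\sigma$ carries the solution space of $Z_3(A)$ into that of $Z_3(\sigma A)$, while $P_{-\sigma}(\sigma A)$—which is available explicitly via the sign-flip symmetry $P_{(\epsilon_0n_0,\dots)}=P_{(n_0,\dots)}(\epsilon_0A_0,\dots)$—carries it back. Hence the composite $P_{-\sigma}(\sigma A)\circ P_\sigma$ is an endomorphism of the three-dimensional local solution space of $Z_3(A)$ commuting with its monodromy. For generic $A$ this monodromy is irreducible (the equation being rigid), so by Schur's lemma the endomorphism is multiplication by a scalar $Sv_\sigma(A)$. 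Now the remainder $R_\sigma$ under right division by $Z_3(A)$ has order at most two and acts on $\ker Z_3(A)$ in exactly this way, so $R_\sigma-Sv_\sigma(A)$ is an operator of order $\le 2$ annihilating a three-dimensional space; it must therefore vanish, and the remainder is exactly the scalar. This is the content of ``mod $Z_3$'' in the statement, and it reduces the problem to identifying one rational function of $A$ for each of the six shifts.

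Next I would locate the zeros of $Sv_\sigma$. Since $P_\sigma$ is second order, the composite is fourth order and $Sv_\sigma$ has a numerator of degree at most four in $(A_0,\dots,A_3)$; the denominator is forced by the explicit normalizations in Proposition~\ref{shiftopZ3}, e.g.\ the factor $A_0+A_2+1$ for the pole-carrying shift $(1010)$ and $A_0-A_2+1$ for $(10\bar10)$. The scalar must vanish precisely where the round trip fails to be invertible, i.e.\ where $Z_3(A)$ or $Z_3(\sigma A)$ becomes reducible through the appearance of a one-dimensional sub- or quotient-module. Reading these degeneracies off the Riemann scheme produces linear forms in the exponents, and these are exactly the factors $A_{0123}+1$, $A_{01\bar23}+1$, and so on. Because the number of such hyperplanes matches the degree bound, I conclude $Sv_\sigma=c_\sigma\,\prod(\text{linear forms})/(\text{denominator})$ with $c_\sigma$ a constant, which I fix by comparing the top-degree coefficient against the leading terms of the explicit operators, or by one numerical specialization of $A$.

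The main obstacle is the bookkeeping in this last step: matching, for each of the six shifts, the correct sign pattern of the four linear forms, checking that each factor really occurs to first order, and verifying the clean cancellation of the normalization denominators for $(1010)$, $(10\bar10)$ and $(00\bar11)$. If the geometric identification of the reducibility hyperplanes is not transparent enough to be fully rigorous, the honest fallback—and the route actually underlying the data files—is the direct one: substitute the operators of Proposition~\ref{shiftopZ3} (using the sign-flip symmetry to obtain $P_{-\sigma}$), form the product in $\mathbb{Q}(A)(x)[\partial]$, perform the right division by $Z_3(A)$, and factor the degree-four numerator of the resulting constant remainder. This is a finite computation that I would carry out in Maple and then recognize as the stated products.
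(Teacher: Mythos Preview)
Your fallback route---forming the composite in $\mathbb{Q}(A)(x)[\partial]$, dividing on the right by $Z_3(A)$, and factoring the constant remainder---is exactly what the paper does; the proposition is stated without proof and the introduction acknowledges that such identities are obtained via Maple's DEtools package. So at that level your proposal matches the paper.

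Your conceptual layer has two soft spots. First, the parenthetical ``the equation being rigid'' is not correct: $H_3$, and hence $E_3$ and $Z_3$, has spectral type $(111,111,111)$ with index of rigidity $0$ and one accessory parameter; $Z_3$ is not Katz-rigid, the accessory parameter is merely pinned down as a polynomial in the exponents. Irreducibility of the monodromy for generic $A$ is still true and is what your Schur argument actually needs, but it requires independent justification, not rigidity. Second, and more seriously, your plan to read off the linear factors from the reducibility locus is circular within the paper's logic: Theorem~\ref{Z3redcond} \emph{derives} the reducibility conditions \emph{from} the S-values computed in this very proposition, not the other way around. To make your approach self-contained you would need an independent source for the reducibility hyperplanes (explicit factorizations of $Z_3$ along each candidate hyperplane, say), and even then you face the bookkeeping of distinguishing the four-factor numerators for $(1100)$ and $(1001)$ from the five-factor numerators with linear denominators for the other four shifts. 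In practice the direct computation is both shorter and what the paper actually relies on.
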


\begin{thm}\label{Z3redcond} If one of
$$2A_2+1,\quad \epsilon_0 A_0+\epsilon_1A_1+\epsilon_2A_2+A_3+1\quad (\epsilon_0,\epsilon_1,\epsilon_2=\pm1)$$
is an even integer, then the equation $Z_3$ is reducible.
\end{thm}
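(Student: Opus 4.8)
Here is how I would approach Theorem~\ref{Z3redcond}.

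The plan is to deduce the reducibility entirely from the explicit S-values listed above, via the principle recorded at the start of Section~\ref{SE3Red}: $Z_3(A)$ is reducible whenever one of its S-values vanishes. The reason I would record first is that each $Sv_\sigma=P_{-\sigma}(\sigma A)\circ P_\sigma$ acts on the three-dimensional solution space of $Z_3(A)$ as the scalar $Sv_\sigma(A)$; since the normalized operators $P_\sigma,P_{-\sigma}$ are nonzero and commute with the monodromy, a zero of $Sv_\sigma(A)$ forces the image of $P_\sigma(A)$ (equivalently $\ker P_{-\sigma}(\sigma A)$) to be a nonzero proper monodromy-invariant subspace, so $Z_3(A)$ is reducible at a generic point of the zero hyperplane, hence on all of it since the reducibility locus is Zariski-closed.

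Next I would read off the linear factors of the six S-values above. Writing each four-term factor $\pm A_0\pm A_1\pm A_2\pm A_3$ with the coefficient of $A_3$ normalized to $+1$ (permissible, since negating a linear form and shifting its constant by $2$ leaves the parity condition unchanged), one verifies the finite fact that all eight sign patterns $\epsilon_0A_0+\epsilon_1A_1+\epsilon_2A_2+A_3$ with $\epsilon_0,\epsilon_1,\epsilon_2=\pm1$ actually occur, each with constant $\pm1$, and that the only remaining numerator factors are $2A_2+1$ and $2A_2-1$; the denominators $A_0\pm A_2+1$, $A_2\pm A_3+1$ are poles, irrelevant to the zero locus. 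Thus the six S-values already yield reducibility on each hyperplane
$$\epsilon_0A_0+\epsilon_1A_1+\epsilon_2A_2+A_3+1=0,\qquad 2A_2+1=0,\qquad 2A_2-1=0,$$
which is exactly the value-$0$ (and, for $2A_2$, also the value-$2$) instance of the conditions asserted in Theorem~\ref{Z3redcond}.

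Finally I would promote these single hyperplanes to all even-integer translates, where the decisive input is Theorem~\ref{Z3shiftop}: $Z_3$ carries a shift operator for \emph{every} even shift, and for a composite shift the round-trip operator factors along the path, giving (away from the pole denominators) $Sv_{\sigma+\tau}(A)=Sv_{\tau}(A)\,Sv_{\sigma}(A+\tau)$. Fixing a linear form $L=\epsilon_0A_0+\epsilon_1A_1+\epsilon_2A_2+A_3$, I take the even shift $\sigma_L=(\epsilon_0,\epsilon_1,0,0)$ with $L(\sigma_L)=2$; iterating it, the one-step factor $L+1$ reproduces itself translated, so $Sv_{n\sigma_L}(A)$ contains the factor $L(A)+1+2j$ for each $0\le j<n$, and running $\pm\sigma_L$ in both directions makes the union of zero loci cover every $L(A)+1\in2\mathbb{Z}$. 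The shift $(0,0,1,\pm1)$, which moves $2A_2$ by $2$, handles $2A_2+1\in2\mathbb{Z}$ in the same way, and combining these gives all conditions of the theorem.

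The step I expect to be the main obstacle is precisely this last multiplicativity-along-a-path argument together with the claim that the iterated product genuinely realizes the factor $L+1+2j$ for every $j$, rather than having these factors cancel against the pole denominators or the composite shift operator degenerate. This rests on the essential uniqueness of the minimal-order shift operator for a given shift and on compatibility of the normalizations, for which I would appeal to the operator-theoretic tools collected in the final section (carried over from \cite{HOSY1}); a careful bookkeeping of the denominators $A_0\pm A_2+1$ and $A_2\pm A_3+1$ under iteration is the one place where the reasoning is not purely formal.
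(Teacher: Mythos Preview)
Your approach is correct and matches the paper's (implicit) argument: reducibility follows from vanishing of the listed S-values via Proposition~\ref{Sred}, and the shift operators of Theorem~\ref{Z3shiftop} extend each zero hyperplane to all its even-integer translates. Your ``main obstacle'' can in fact be bypassed: rather than tracking a composite S-value and worrying about cancellation against the denominators, propagate reducibility one step at a time using Proposition~\ref{Sred} directly --- at each step either the one-step S-value is nonzero (so the shift is an isomorphism and reducibility transfers) or it is zero (so both endpoints are already reducible by that same proposition) --- which eliminates any need for the multiplicativity formula or denominator bookkeeping.
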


\subsection{Reducible cases of $Z_3$} 

\begin{prp}\label{Z3redcase}
  When the equation $Z_3$ is reducible, it factors as
\[\def\arraystretch{1.1} \begin{array}{rcccccl}
A_2=\ &\cdots &-3/2 &-1/2   &1/2    &3/2  &\cdots\\
    &\cdots &[21] &[21]A0 &[12]A0 &[12] &\cdots\\ 
\epsilon_0A_0+\epsilon_1A_1+A_2+A_3=\ &\cdots &-3  &-1  &1   &3    &\cdots\\
                                  &\cdots &[12] &[12]A0 &[21]A0 &[21] &\cdots\\
\epsilon_0A_0+\epsilon_1A_1-A_2+A_3=\ &\cdots&-3&-1&1&3&\cdots\\
                                  &\cdots &[21] &[21]A0 &[12]A0 &[12] &\cdots
\end{array}\]
Here $\epsilon_0, \epsilon_1=\pm1$.
When it appears no apparent singular point, 
the factor $[2]$ is equivalent to $E_2$.
\end{prp}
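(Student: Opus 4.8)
The plan is to follow the template already executed for $S\!E_3$ in Subsection~\ref{SE3Red}: reducibility itself is supplied by the vanishing of an S-value (the list of $Sv_\sigma$ together with Theorem~\ref{Z3redcond}), so the only genuinely new content is the determination of the factorization \emph{type} and of the two ``$A0$'' columns. I would organize the three rows around three shifts. For the row governed by $2A_2+1$ I use $(1010)$ and $(10\bar10)$, whose S-values carry the factors $(2A_2+1)$ and $(2A_2-1)$ vanishing at $A_2=-\tfrac12$ and $A_2=+\tfrac12$. For \emph{both} linear rows I use the single shift $(1100)$, since $Sv_{(1100)}$ contains the factors $(A_{0123}+1)$ and $(A_{01\bar23}+1)$, whose vanishing is exactly the conditions $\epsilon_0A_0+\epsilon_1A_1+A_2+A_3=-1$ and $\epsilon_0A_0+\epsilon_1A_1-A_2+A_3=-1$. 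The sign choices $\epsilon_0,\epsilon_1$ (and $\epsilon_3$) are disposed of first by the symmetries recorded after Theorem~\ref{Z3shiftop}: the $x\to1-x$ relation exchanging $A_0\leftrightarrow A_1$ and the sign-change relation $P_{(\epsilon_0n_0,\epsilon_1n_1,n_2,\epsilon_3n_3)}=P_{(n_0,n_1,n_2,n_3)}(\epsilon_0A_0,\dots)$, so that it suffices to treat the representative $\epsilon_0=\epsilon_1=+1$.

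The heart of the argument is the explicit factorization at the two \emph{central} values of each row, where the relevant $Sv_\sigma$ vanishes and no apparent singular point is present: $A_2=\mp\tfrac12$ for the first row, and value $-1$ together with $+1$ (reached by one application of $(1100)$) for the two linear rows. Here I would factor $Z_3$ by hand exactly as $D\!F(-1),D\!F(0)$ and $D\!F(g=\pm1)$ were factored in Subsection~\ref{SE3Red}. In the $[21]A0$ case a single sub-solution of the shape $x^{\alpha}(x-1)^{\beta}$ with no zero or pole off $\{0,1,\infty\}$ is forced by a resonance in the Riemann scheme of $Z_3$ and furnishes the order-one right factor; in the $[12]A0$ case it is instead a two-dimensional monodromy-invariant subspace, the solution space of an order-two right factor, that appears. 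In either case the order-two factor has only the three singular points $0,1,\infty$ and no apparent singularity, hence is projectively a Gauss operator, i.e.\ equivalent to $E_2$; this proves the last sentence of the statement. At the two central values of a row the order-one factor comes out on opposite sides (giving $[21]A0$ versus $[12]A0$), which is precisely the source of the type swap as the value crosses $0$, and is the analogue of the ``trivial identity''/``mysterious relation'' dichotomy found for $S\!E_3$.

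To pass from the central values to the whole arithmetic progression I would invoke Theorem~\ref{onetwo_to_twoone}. The propagating shift stays inside the reducibility locus: $(1100)$ changes each linear form by $2$ (preserving its odd parity) while leaving $A_2$ fixed, and $(1010),(10\bar10)$ change $2A_2+1$ by $\pm2$ (preserving its even parity), so the defining condition of each row is untouched. Feeding the central factorization into Theorem~\ref{onetwo_to_twoone} propagates the type outward, the theorem simultaneously producing an apparent singular point once one leaves the centre---this is exactly what removes the $A0$ label for $|A_2|\ge\tfrac32$ and for $|\text{linear form}|\ge3$. Finally, the adjoint symmetry of Proposition~\ref{symmE3} acts on $Z_3$ as $A\to-A$ (one checks $e_5=2A_2+1\mapsto 2-e_5=2(-A_2)+1$, and similarly $A_0,A_1,A_3\mapsto-A_0,-A_1,-A_3$); it negates the value of each row while reversing the factorization type, so it carries value $-1$ to value $+1$ with $[12]A0\leftrightarrow[21]A0$ (and $A_2=-\tfrac12$ to $A_2=+\tfrac12$). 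This explains the reflection symmetry of every row about its centre and halves the required computation.

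The main obstacle I expect is the second step: producing the explicit central factorizations and, above all, verifying at each central value \emph{which} side carries the order-one factor, since this fixes the $[12]$-versus-$[21]$ entry and the location of the swap. The computations are of the same nature as those displayed for $S\!E_3$, but one must identify the pole-free sub-solution correctly among the three local exponents and \emph{verify}, rather than assume, the absence of apparent singularities; an error here would flip an entire half of a row. A secondary difficulty is checking the precise hypotheses under which Theorem~\ref{onetwo_to_twoone} guarantees both the propagation of type and the creation of an apparent singular point the moment one steps off the centre, so that the $A0$ columns land exactly at the two innermost values and nowhere else.
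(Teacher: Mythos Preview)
The paper omits the proof of this proposition entirely (the sentence following the statement is ``We omit the proof.''), so there is no proof in the paper to compare against. Your proposal follows precisely the template the paper executes in detail for $S\!E_3$ in Subsection~\ref{SE3Red} and alludes to again in Proposition~\ref{factorE3a}: explicit factorization at the two central values of each arithmetic progression, propagation of the type along the progression via Theorem~\ref{onetwo_to_twoone}, and reduction of the $\epsilon_0,\epsilon_1$ sign choices by the symmetries listed after Theorem~\ref{Z3shiftop} together with the adjoint involution $A\mapsto -A$ coming from Proposition~\ref{symmE3}. This is almost certainly what the authors had in mind, and your identification of the relevant shifts---$(1010),(10\bar10)$ for the $A_2$ row and $(1100)$ for both linear rows---is correct, since $Sv_{(1100)}$ indeed contains the factors $A_{0123}+1$ and $A_{01\bar23}+1$ covering both $\pm A_2$ cases at once.

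The two caveats you flag are the right ones. First, at each central value you must actually exhibit the order-one factor (a monomial solution $x^\alpha(x-1)^\beta$ on the right for $[21]A0$, or on the left via the adjoint for $[12]A0$) and check that the complementary order-two factor has no singular point off $\{0,1,\infty\}$; this is a direct computation with the explicit $Z_3$ of Subsection~3.1 and is of the same size as the $D\!F(-1),D\!F(0)$ computations displayed for $S\!E_3$. Second, Theorem~\ref{onetwo_to_twoone} as stated guarantees propagation of the type and the vanishing pattern of the S-values, but the appearance of an apparent singularity away from the centre (the dropping of the ``$A0$'' label) is not part of its statement and must be read off the factorization of the shift relation, as in the $S\!E_3$ analysis; you should not cite the theorem for that part.
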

We omit the proof.

\section{Symmetry of the cubic polynomial $A_{00}(e)$}
\secttoc
In this section, we study the cubic polynomial $A_{00}(e)$  given in \eqref{accpara00} with $\epsilon$ replaced by $e$. It is invariant under permutations of exponents at $0,1$ and at $\infty$:
  $$\{e_1,e_2\},\quad \{e_3,e_4\},\quad{\rm and}\quad \{e_5,e_6,e_7\};$$
  which we call {\it obvious symmetry}.
We show that $A_{00}$ is invariant under an action of the symmetry group of degree eight; much bigger symmetry than the obvious one.
The following theorem is suggested by  Eiichi Sato, to whom the authors are grateful:
\begin{thm}Put
$$F_8:=y_1^3+\cdots+y_8^3, \quad{\rm where}\quad y_1+\cdots+y_8=0,$$
and define $hA_{00}(e_0,\dots,e_6)$, the homogenized $A_{00}(e_1,\dots,e_6)$, by
$$hA_{00}:=e_0^3\cdot A_{00}(e_1/e_0,\dots,e_6/e_0).$$
Put 
    $${ytoe}:\quad\begin{array}{ll}
y_1&=-3e_0 - 2e_1 + 4e_2 + 2e_3 + 2e_4,\\
y_2&=-3e_0 - 2e_2 + 4e_1  + 2e_3 + 2e_4,\\
y_3&=\ \ 3e_0 + 2e_4 - 4e_3 - 2e_1 - 2e_2,\\
y_4&=\ \ 3e_0 + 2e_3 - 4e_4 - 2e_1 - 2e_2,\\
y_5&=-9e_0  + 4e_1 + 4e_2  + 2e_3 + 2e_4 + 6e_5,\\
y_6&=-9e_0  + 4e_1 + 4e_2  + 2e_3 + 2e_4 + 6e_6 ,\\
y_7&=\ \ 9e_0  - 4e_3 - 4e_4- 2e_1 - 2e_2 - 6e_5 - 6e_6 ,\\
y_8&=\ \ 9e_0  - 4e_1 - 4e_2 - 2e_3 - 2e_4,\quad y_1+\cdots +y_8=0,
    \end{array}$$
then we have
$$F_8(ytoe)=-2^4\cdot 3^4\cdot hA_{00}.$$
Conversely, put
    $$etoy:\quad\begin{array}{ll}
  e_1&=-2y_1-y_2-3y_3-3y_4-y_5-y_6-y_7,\\
  e_2&=-2y_2-y_1-3y_3-3y_4-y_5-y_6-y_7,\\
  e_3&=-2y_3-y_4-y_5-y_6-y_7,\\
  e_4&=-2y_4-y_3-y_5-y_6-y_7,\\
  e_5&=-y_1-y_2-y_3-y_4-y_6-y_7,\\
  e_6&=-y_1-y_2-y_3-y_4-y_7-y_5,\\
  e_7&=-y_1-y_2-y_3-y_4-y_5-y_6,\\
  e_0&=-2y_1-2y_2-4y_3-4y_4-2y_5-2y_6-2y_7,\quad e_1+\cdots+e_7=3e_0,
\end{array}$$
then we have $$hA_{00}(etoy)=-F_8/6.$$
The obvious symmetry above is translated in terms of the $y$-coordinates as the permutations of
  $$\{y_1,y_2\},\quad\{y_3,y_4\},\quad{\rm and }\quad \{y_5,y_6,y_7\}.$$
\end{thm}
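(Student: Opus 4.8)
The plan is to read the theorem as a package of explicit linear and cubic identities whose single nontrivial ingredient is the relation $F_8(ytoe)=-2^4\cdot3^4\,hA_{00}$; everything else then follows from this by linear algebra together with the fact that $F_8$ and $hA_{00}$ are homogeneous of degree three. Write $L$ for the linear substitution $ytoe$, sending $(e_0,\dots,e_6)$ to $(y_1,\dots,y_8)$, and $M$ for $etoy$, sending $(y_1,\dots,y_7)$ to $(e_0,\dots,e_7)$. First I would make $hA_{00}$ into an honest polynomial: starting from \eqref{accpara00} I eliminate $\epsilon_7$ by $\epsilon_1+\cdots+\epsilon_7=3$, so that $A_{00}$ becomes an inhomogeneous cubic in $\epsilon_1,\dots,\epsilon_6$, and then set $hA_{00}=e_0^3A_{00}(e_1/e_0,\dots,e_6/e_0)$. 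In parallel I substitute the eight linear forms of $L$ into $\sum_{i=1}^8 y_i^3$. As a preliminary check one verifies $y_1+\cdots+y_8\equiv0$ under $L$ (column by column), so that $F_8\circ L$ really is $\sum_i y_i(e)^3$ with no constraint to carry along. Both sides are now homogeneous cubics in the seven variables $e_0,\dots,e_6$, and the identity amounts to the agreement of their $\binom{9}{3}=84$ coefficients. This coefficient match is the computational core and is where the cited Maple computation enters; it is entirely mechanical.

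The converse identity then needs no further expansion. I would verify by direct substitution that $L$ and $M$ compose to a scalar, namely $L\circ M=6\cdot\mathrm{id}$ on the hyperplane $\sum y_i=0$; computing one coordinate, e.g.\ $y_1\circ M=6y_1$, already exhibits the pattern and the other six follow in exactly the same way (so that equivalently $M\circ L=6\cdot\mathrm{id}$ on $e$-space). Granting this and the relation $hA_{00}=-F_8\circ L/(2^4\cdot3^4)$ just established, one computes
\[
hA_{00}\circ M=-\frac{F_8\circ(L\circ M)}{2^4\cdot3^4}=-\frac{F_8(6\,\cdot)}{2^4\cdot3^4}=-\frac{6^3}{2^4\cdot3^4}\,F_8=-\frac{F_8}{6},
\]
where $F_8(6\,\cdot)=6^3F_8$ by degree-three homogeneity and $6^3/(2^4\cdot3^4)=216/1296=1/6$. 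This is precisely the asserted $hA_{00}(etoy)=-F_8/6$.

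The translation of the obvious symmetry is a direct check on the entries of $L$. Interchanging $e_1\leftrightarrow e_2$ sends $y_1\leftrightarrow y_2$ and fixes the remaining forms, because $y_3,\dots,y_8$ are symmetric in the pair $\{e_1,e_2\}$; likewise $e_3\leftrightarrow e_4$ gives $y_3\leftrightarrow y_4$ and $e_5\leftrightarrow e_6$ gives $y_5\leftrightarrow y_6$. The only transpositions requiring the constraint are those moving $e_7$: to read off $e_6\leftrightarrow e_7$ I substitute $e_7=3e_0-e_1-\cdots-e_6$ and find that $L$ carries this to the clean interchange $y_6\leftrightarrow y_7$ (fixing the rest), and symmetrically for $e_5\leftrightarrow e_7$. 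These transpositions generate the obvious group $S_2\times S_2\times S_3$ on $\{e_1,e_2\},\{e_3,e_4\},\{e_5,e_6,e_7\}$ and map it onto the permutations of $\{y_1,y_2\},\{y_3,y_4\},\{y_5,y_6,y_7\}$, as claimed. The main obstacle is organizational rather than conceptual: the degree-three expansion matching $\sum y_i^3$ against $hA_{00}$ is bulky, and one must keep the elimination of $\epsilon_7$ and the homogenization mutually consistent so that the $84$ coefficients genuinely coincide; among the symmetry verifications only the constraint-using ones, $y_6\leftrightarrow y_7$ and $y_5\leftrightarrow y_7$, are mildly delicate.
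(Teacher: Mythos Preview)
Your proposal is correct and follows essentially the same route as the paper: the main cubic identity is a direct coefficient computation, and the symmetry statement is checked by inspecting how the transpositions act on the linear forms, with the $e_6\leftrightarrow e_7$ case handled via the constraint exactly as the paper does. The one small addition is your derivation of the converse identity $hA_{00}(etoy)=-F_8/6$ from the first via $L\circ M=6\cdot\mathrm{id}$ and degree-three homogeneity, which saves a second brute-force expansion; the paper simply folds both identities into ``just a computation.''
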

Once the transformation was found then proof is just a computation. The last statement is checked as follows:
If we substitute $e_6$ by $3-(e_1+\cdots+e_6)$ $(=e_7)$ in $y_6=-9e_0 + 4e_1 + 4e_2 + 2e_3 + 2e_4+ 6e_6 ,$ then it changes into $y_7=9e_0 - 2e_1 - 2e_2  - 4e_3 - 4e_4- 6e_5 - 6e_6$.
\par\medskip
$F_8$ is invariant under the permutations of $\{y_1,\dots,y_8\}$. Thus $A_{00}$ admits the action of the symmetry group of degree eight, which we denote by $S_8$.

\begin{dfn}Let $X_5$ be the 5-dimensional subvariety defined by
$$F_8=0,\quad y_1+\cdots+y_8=0$$
in the seven dimensional projective space coordinatized by  $y_1:\cdots:y_8.$
\end{dfn}
\begin{lemma}The singular points of $X_5$ are isolated; they are
$$\varepsilon_1:\cdots:\varepsilon_8,\quad \varepsilon_i=\pm1,\quad \varepsilon_1+\cdots+\varepsilon_8=0.$$
\end{lemma}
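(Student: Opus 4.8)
The plan is to treat $X_5$ as the complete intersection in $\mathbb{P}^7$ (coordinates $y_1:\cdots:y_8$) cut out by the linear form $L:=y_1+\cdots+y_8$ and the cubic $F_8=y_1^3+\cdots+y_8^3$, and then to apply the Jacobian criterion. Since $L$ is linear, the hyperplane $H=\{L=0\}$ is a $\mathbb{P}^6$, and $F_8|_H$ is a nonzero cubic, so $X_5=V(L)\cap V(F_8)$ is a genuine hypersurface of dimension $5=7-2$ in $H$; in particular it is a (global) complete intersection of the expected codimension $2$, so that the Jacobian criterion locates its singular points exactly.

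Next I would write down the $2\times 8$ Jacobian of $(L,F_8)$,
$$J=\begin{pmatrix}1&1&\cdots&1\\ 3y_1^2&3y_2^2&\cdots&3y_8^2\end{pmatrix},$$
and recall that a point $p\in X_5$ is singular precisely when $\operatorname{rank}J(p)<2$, i.e. when the two rows are linearly dependent. Because the top row $(1,\dots,1)$ never vanishes, dependence forces the bottom row to be a scalar multiple of the top one, whence $y_1^2=y_2^2=\cdots=y_8^2$ at $p$. The common value cannot be $0$ (that would force $p=0$, which is not a point of $\mathbb{P}^7$), so after rescaling the homogeneous coordinates we may normalize it to $1$; hence $y_i=\varepsilon_i$ with $\varepsilon_i=\pm1$ for every $i$.

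It then remains to intersect this locus with $X_5$ itself. Imposing $L(p)=0$ gives the constraint $\varepsilon_1+\cdots+\varepsilon_8=0$, and for such sign vectors one has $F_8(p)=\sum_i\varepsilon_i^3=\sum_i\varepsilon_i=0$ automatically, since $\varepsilon_i^3=\varepsilon_i$ for $\varepsilon_i=\pm1$. Thus every candidate with $\sum\varepsilon_i=0$ does lie on $X_5$ and is singular there, while those with $\sum\varepsilon_i\neq0$ lie off $X_5$ and are irrelevant. This shows the singular locus is exactly $\{\varepsilon_1:\cdots:\varepsilon_8 : \varepsilon_i=\pm1,\ \sum\varepsilon_i=0\}$, a finite set (of $\binom{8}{4}/2=35$ points, identifying $\varepsilon$ with $-\varepsilon$), so the singular points are isolated.

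There is no genuine obstacle here: once $X_5$ is viewed as a complete intersection, the argument reduces to a short application of the Jacobian criterion together with the elementary identity $\varepsilon^3=\varepsilon$ for $\varepsilon=\pm1$. The only points deserving care are the verification that $X_5$ really is a complete intersection of codimension $2$ (so that the rank-$<2$ condition is the correct singularity criterion) and the bookkeeping of the projective rescaling that normalizes the common value of $y_i^2$ to $1$.
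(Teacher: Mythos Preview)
Your proof is correct. The paper states this lemma without proof, so there is no authorial argument to compare against; your approach via the Jacobian criterion for the complete intersection $V(L,F_8)\subset\mathbb{P}^7$ is the natural one, and all the checks (that $X_5$ has the expected codimension, that rank drop forces $y_i^2$ constant, that $\varepsilon_i^3=\varepsilon_i$ makes the cubic equation automatic once $\sum\varepsilon_i=0$, and the count $\binom{8}{4}/2=35$) are accurate.
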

The 3-dimensional linear subspaces
$$y_a+y_b=y_c+y_d=y_e+y_f=0,\quad \{a,\dots,f\}\subset\{1,\dots,8\}$$
live in $X_5$. 
\begin{remark}[Private communication with E.~Sato] They are the maximal dimensional linear subspaces in $X_5$. Conjecture:  The maximal dimensional linear subspaces in $$X_n: y_1^3+\cdots+y_{n+3}^3=y_1+\cdots+y_{n+3}=0,\quad n:{\rm odd}$$
in $\mathbf{P}^{n+2}$ is the $S_{n+3}$-orbit of the linear subspace 
$$y_1+y_2=\cdots=y_{n}+y_{n+1}=0.$$
It is proved only when $n=1,3,5,7$.
\end{remark}
The following is easy to show.
\begin{prp}
The four dimensional linear subspaces spanned by one of the 3-dimensional subspaces in $X_5$ and one of the singular points are of two types
$$y_a+y_b=y_c+y_d=0\quad{\rm and }\quad y_a+y_b+y_c+y_d=y_a+y_b+y_e+y_f=0.$$
\end{prp}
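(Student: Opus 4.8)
The plan is to translate the two geometric objects into combinatorial data and then reduce the claim to a short two-case computation carried out in the dual space. I would work throughout in the $7$-dimensional vector space $V=\{y\in\mathbb{C}^8:y_1+\cdots+y_8=0\}$, so that $X_5=\mathbf{P}\{y\in V:F_8(y)=0\}$. First I would record the encoding: each $3$-dimensional subspace $L$ of the listed form corresponds to a partition of $\{1,\dots,8\}$ into four pairs $\{a,b\},\{c,d\},\{e,f\},\{g,h\}$, with its cone $\tilde L\subseteq V$ cut out by $y_a+y_b=y_c+y_d=y_e+y_f=0$ (the fourth equation $y_g+y_h=0$ being forced by $y_1+\cdots+y_8=0$); and each singular point is the class of a sign vector $\varepsilon\in\{\pm1\}^8$ with exactly four entries $+1$ and four entries $-1$. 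Since $L$ is $3$-dimensional and $\varepsilon\in L$ holds exactly when every pair of the partition receives opposite signs, the span is genuinely $4$-dimensional precisely when $\varepsilon\notin L$, which I henceforth assume (this is implicit in the statement).

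The key move is to compute the span $M=\mathbf{P}(\tilde L+\mathbb{C}\varepsilon)$ through its space of vanishing linear forms $M^\perp=\{\phi\in\tilde L^\perp:\phi(\varepsilon)=0\}\subseteq V^*$, since $M$ is the common zero locus of $M^\perp$. I would introduce the four pair-sum functionals $\ell_1=y_a+y_b,\ \ell_2=y_c+y_d,\ \ell_3=y_e+y_f,\ \ell_4=y_g+y_h$; their classes in $V^*$ span the $3$-dimensional space $\tilde L^\perp$ subject to the single relation $[\ell_1]+[\ell_2]+[\ell_3]+[\ell_4]=0$, which is the only place the ambient condition $\sum y_i=0$ enters. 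Writing a general element of $\tilde L^\perp$ as $[\sum_i c_i\ell_i]$ and setting $\delta_i:=\ell_i(\varepsilon)\in\{2,0,-2\}$, the condition to annihilate $\varepsilon$ becomes the single linear equation $\sum_i c_i\delta_i=0$ on $(c_i)$, taken modulo $(1,1,1,1)$. Because $\sum_i\delta_i=\sum_j\varepsilon_j=0$, the vector $(1,1,1,1)$ solves this equation, so $M^\perp$ is $2$-dimensional and $M$ has codimension $2$, as required.

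It then remains to solve the equation in the only two possible cases. Since $\delta_i\in\{2,0,-2\}$, $\sum_i\delta_i=0$, and $\delta\neq0$ (as $\varepsilon\notin L$), up to reindexing the four pairs one has either $\tfrac12\delta=(1,-1,0,0)$ or $\tfrac12\delta=(1,1,-1,-1)$, according to whether $\varepsilon$ has one pair of each of the signs $(+,+)$ and $(-,-)$ with the other two pairs mixed, or two pairs of each. In the first case the equation reads $c_1=c_2$, whence $M^\perp=\langle[\ell_3],[\ell_4]\rangle$ and $M$ is defined by $y_e+y_f=y_g+y_h=0$: the first type, the surviving equations being exactly those of the two mixed pairs. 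In the second case the equation reads $c_1+c_2=c_3+c_4$, and one checks that $[\ell_1+\ell_3]$ and $[\ell_1+\ell_4]$ span $M^\perp$, giving $y_a+y_b+y_e+y_f=y_a+y_b+y_g+y_h=0$: the second type. This exhausts all possibilities and proves the proposition. I expect no genuine difficulty here; the only point requiring care is the dimension bookkeeping forced by the relation $[\ell_1]+\cdots+[\ell_4]=0$ in $V^*$, and once that is respected both cases are immediate.
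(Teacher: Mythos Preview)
Your argument is correct: the reduction to the dual space and the classification of the sign-pattern $\delta$ into the two cases $(1,-1,0,0)$ and $(1,1,-1,-1)$ is exactly the right way to see this, and your dimension bookkeeping with the relation $[\ell_1]+[\ell_2]+[\ell_3]+[\ell_4]=0$ is handled carefully. The paper itself gives no proof beyond declaring the proposition ``easy to show,'' so there is nothing to compare against; your write-up would serve as a complete justification.
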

\section{Other four specializations: $E_{3a},E_{3b},E_{3c},E_{3d}$}\secttoc
In this section, we find and study the four equations $E_{3a}\dots,E_{3d}$; find shift operators, S-values, and reducible cases.
\subsection{$S_8$-orbits of $S\!E_3$ and $Z_3$}In the previous subsections, we studied the two restrictions of the exponents
$$e_1-e_2-e_6+1-e_3=e_4+e_1+e_6-1-e_3=0\quad{\rm and}\quad 
e_1+e_5+e_2-1=e_3+e_4+e_5-1=0,$$
defining $S\!E_3$ and $Z_3$. In  terms of the $y$-coordinates, these restrictions turn out to be
$$y_1+y_4=y_3+y_6=0\quad{\rm and}\quad y_1+y_2+y_3+y_4=y_3+y_4+y_6+y_7=0.$$
respectively. 
We consider restrictions of $E_3(e)$ corresponding to the two types of the 4-dimensional subspaces above (the $S_8$-orbits of the two subspaces above), 
and look for equations admitting four independent shift operators $(P,Q)$ of type
$$P,Q=\frac{R_{k-2}\cdot dx^2}{x^m(x-1)^m}+\frac{R_{k-1}\cdot dx}{x^{m+1}(x-1)^{m+1}}+\frac{R_{k}}{x^{m+2}(x-1)^{m+2}},\quad k=18,\ m=4,$$
where $R_k$ is used symbolically for a polynomial of degree $k$ in $x$.
Up to the obvious symmetry, we find four codimension-2 restrictions 
$$E_{3a},\quad E_{3b},\quad E_{3c},\quad E_{3d}$$
of $E_3(e)$, defined by the following four subspaces
$$\begin{array}{l} E_{3a}: y_1+y_4=y_2+y_3=0,\\E_{3b}:y_3+y_5=y_4+y_6=0,\\
E_{3c}:y_1+y_5+y_2+y_8=y_1+y_5+y_3+y_4=0,\\ E_{3d}:y_1+y_5+y_3+y_4=y_1+y_5+y_6+y_8=0,\end{array}$$
in terms of the $e$-coordinates, they are
$$\begin{array}{l} E_{3a}: e_3=e_1,\quad e_4=e_2,\\
E_{3b}:e_2 = -e_3 - e_5 - 2e_6 + 3 - e_1,\quad e_4 = e_3 - e_5 + e_6,\\
E_{3c}:e_2 = 2e_1 + e_3 + e_4,\quad e_5 = 1 - e_1 - e_3 - e_4,\\
E_{3d}:e_2 = -e_3/2 - e_4/2 + 3/2 - e_1 - (3e_6)/2,\quad e_5 = e_1 + e_6.\end{array}$$

\subsection{Equation $E_{3a}$}
In this subsection we study a specialization
$E_{3a}=E_{3a}(e_1,e_2,e_5,e_6)$ of $E_3$
with the condition $$e_3=e_1,\ e_4=e_2.$$ The accessory parameter takes the value
$$A_{00}=-e_5e_6e_7. \qquad e_7=s=-(2e_1+2e_2+e_5+e_6-3).$$

\subsubsection{Shift operators of $E_{3a}$}\label{E3E3aShift}

\begin{thm}\label{shiftopE3a}The equation $E_{3a}$ admits a shift operator for every shift in $e_i\to e_i\pm1,\ e_j\to e_j\pm2 \quad (i=1,2,\ j=5,6)$.
\end{thm}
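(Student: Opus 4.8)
The plan is to prove the existence of shift operators for $E_{3a}$ by direct construction, following the template for shift operators of type $P,Q=p_2\partial^2+p_1\partial+p_0$ (order $2$) that was used successfully for $S\!E_3$ and $Z_3$. Since $E_{3a}$ is the specialization $e_3=e_1$, $e_4=e_2$ of $E_3$, with accessory parameter $A_{00}=-e_5e_6e_7$ and $e_7=s=3-2e_1-2e_2-e_5-e_6$, I would first write down $E_{3a}$ explicitly in $(x,\partial)$-form by substituting these relations into the coefficients $a_3,a_2,a_1,a_0$ of $H_3$ given in Section~1, using $a_{00}=A_{00}=-e_5e_6e_7$. The defining property of a shift operator $P$ for a given shift $\sigma$ of the exponents is the intertwining relation $E_{3a}(\sigma e)\circ P=Q\circ E_{3a}(e)$ for some operator $Q$; so my task is to produce, for each of the four generating shifts, a pair $(P,Q)$ of second-order operators whose coefficients are polynomials in $x$ with coefficients rational in the exponents.

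Concretely, for each generating shift among $e_1\to e_1\pm1$, $e_2\to e_2\pm1$, $e_5\to e_5\pm2$, $e_6\to e_6\pm2$, I would set up the ansatz $P=p_2\partial^2+p_1\partial+p_0$ with $p_i$ polynomials in $x$ of the degrees dictated by the Riemann scheme (reading off the local exponent shifts at $0,1,\infty$ to fix the leading and trailing behaviour, exactly as the normalizations $P_{a+}=x^2(x-1)^2\partial^2+\cdots$, $P_{c-}=\partial^2+\cdots$ in Theorem~\ref{S13shiftopSE3} were fixed), and likewise for $Q$. Substituting into $E_{3a}(\sigma e)\circ P-Q\circ E_{3a}(e)=0$ and collecting coefficients of $x^i\partial^j$ yields a linear system over the field of rational functions in the exponents; solving it determines the unknown coefficients of $p_i$ and $q_i$. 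It is enough to treat the four shifts $e_1+$, $e_2+$, $e_5+$, $e_6+$ together with their inverses, since all shifts in the statement are generated by these; and in practice, as in the previous sections, the solution is obtained by computer algebra (Maple \textsl{DEtools}), so the proof amounts to exhibiting the resulting operators and verifying the division $E_{3a}(\sigma e)\circ P\equiv 0\pmod{Q\circ E_{3a}(e)}$.

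Two simplifications are worth exploiting. First, the obvious symmetry of $E_3$ (permutation of $\{e_1,e_2\}$, of $\{e_3,e_4\}$, and of $\{e_5,e_6,e_7\}$) survives in $E_{3a}$, so once a shift operator is found for $e_5\to e_5+2$ the one for $e_6\to e_6+2$ follows by the transposition $\{e_5,e_6\}$, and similarly the $e_1$-shift and $e_2$-shift are interchanged; this halves the number of independent computations to essentially two. Second, the adjoint and coordinate-change symmetries in Proposition~\ref{symmE3} relate the $\pm$ shifts, so the minus-shifts need not be constructed from scratch. The hardest point is not conceptual but is ensuring that the linear system is actually solvable, i.e.\ that the chosen degrees of $p_i,q_i$ are large enough to admit a nonzero solution while the intertwining equations are consistent. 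The main obstacle is therefore the possible appearance of denominators and strange heads: as the remark after Theorem~\ref{S13shiftopSE3} warns, some shift operators (for the minus shifts, and the $g-$ shift of $S\!E_3$) have denominators such as $x$, $x-1$, $x(x-1)$ and non-monomial leading coefficients. I expect the analogous phenomenon for the $e_5-,e_6-$ shifts of $E_{3a}$, where the head of $P$ will be a genuine quadratic in $x$ rather than a monomial, and verifying that the resulting rational $p_i$ give a bona fide intertwiner — with all spurious poles cancelling modulo $E_{3a}$ — is where the delicate bookkeeping lies.
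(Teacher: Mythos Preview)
Your proposal is correct and matches the paper's approach exactly: the paper proves the theorem by explicitly exhibiting the shift operators $P_{1p},Q_{1p},P_{1n},Q_{1n}$ for $e_1\to e_1\pm1$ and $P_{5pp},Q_{5pp},P_{5nn},Q_{5nn}$ for $e_5\to e_5\pm2$, with the $e_2$- and $e_6$-shifts obtained by the obvious symmetry you identify. One minor point: your anticipated complication of ``strange heads'' for the $e_5-,e_6-$ shifts does not in fact arise here---all four $e_5$-shift operators have the standard head $x^2(x-1)^2\partial^2$, though parameter-denominators such as $(2-e_7+e_5)$ do appear in the lower coefficients.
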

\par\noindent
{\bf Shift operators for $e_1\to e_1\pm1$:} 
The shift relation $E_{3as}\circ P_{1p} = Q_{1p}\circ E_{3a},\ E_{3as}=E_{3a}(e_1=e_1+1)$ is solved by
\[\def\arraystretch{1.2}\setlength\arraycolsep{2pt}
\begin{array}{lcl}
P_{1p}&=&x^2(x-1)^2\partial^2+1/2x(2x-1)(x-1)(e_5+e_6+1)\partial + e_5e_6x(x-1)\\
&&\quad   -e_1(e_5+e_6+2e_1-1)/2, \\
Q_{1p}&=&x^2(x-1)^2\partial^2+1/2x(2x-1)(x-1)(e_5+e_6+3)\partial \\
&&\quad + (e_5+1)(e_6+1)x(x-1) -e_1(e_5+e_6+2e_1+1)/2,
\end{array}
\]
and the shift relation
$E_{3as}\circ P_{1n} = Q_{1n}\circ E_{3a},\ E_{3as}=E_{3a}(e_1=e_1-1)$ by
\[\def\arraystretch{1.1}\setlength\arraycolsep{2pt}
\begin{array}{lcl}
  P_{1n}&=&x(x-1)\partial^2-(2x-1)(e_2-1)\partial-(2e_2+e_7-1)e_7,\\
Q_{1n}&=&x(x-1)\partial^2-(2x-1)(e_2-1)\partial-(2e_2+e_7)(e_7+1).
\end{array}
\]
\par\medskip\noindent
{\bf Shift operators for $e_5\to e_5\pm2$:} 
The shift relation
$E_{3as}\circ P_{5pp}=Q_{5pp}\circ E_{3a},\ E_{3as}=E_{3a}(e_5=e_5+2)$
for the shift $e_5\rightarrow e_5+2$ is solved by
\[\def\arraystretch{1.2}\setlength\arraycolsep{2pt}
\begin{array}{rcl}
P_{5pp}&=& x^2(x-1)^2\partial^2 + (1/2)x(2x-1)(x-1)(1+e_5+e_6)\partial \\
&&     +e_5e_6x(x-1)+(1/4)e_5(2-e_7-2e_1)(2-e_7-2 e_2)/(2-e_7+e_5),\\
Q_{5pp}&=&x^2(x-1)^2\partial^2+(1/2)x(2x-1)(x-1)(e_5+e_6+5)\partial \\
&& + (e_5+3)(e_6+1)x(x-1)+ (1/16)\left\{(e_5+2)e_7^2+2(e_5+2)(e_1+e_2-3)e_7
\right. \\
&&\left.  +4(4+e_1e_2e_5+3e_5-2(e_1+e_2)(e_5+1))\right\}/(2-e_7+e_5),
\end{array}
\]
and the shift relation
$E_{3as}\circ P_{5nn} = Q_{5nn}\circ E_{3a},\  E_{3as}=E_{3a}(e_5=e_5-2)$
for the shift $e_5\rightarrow e_5-2$ by
\[\def\arraystretch{1.2}\setlength\arraycolsep{2pt}
\begin{array}{lcl}
  P_{5nn}&=& x^2(x-1)^2\partial^2 -1/2x(2x-1)(x-1)(-4+2e_1+2e_2+e_5)\partial \\
 &&   +e_7e_6x(x-1)+(1/4)e_7(2e_2+e_5-2)(2e_1+e_5-2)/(2+e_7-e_5), \\
 Q_{5nn}&=&x^2(x-1)^2\partial^2 -1/2x(2x-1)(x-1)(-4+2e_1+2e_2+e_5)\partial\\
 & &   +(e_7+3)(e_6+1)x(x-1)-(1/4)\left\{(2(4-e_5)(e_1+e_2)-e_5^2+6e_5 \right.\\
&& \left. -4e_1e_2-12)e_7+2(e_5-2)(e_7+e_6+1)\right\}/(2+e_7-e_5),
\end{array} \setlength\arraycolsep{3pt}
\]

\subsubsection{S-values and reducibility conditions  of $E_{3a}$}\label{E3E3aS}

\begin{prp}\label{SvalueE3a}The S-values for the shifts above:
\[\def\arraystretch{1.1} \begin{array}{lcl}
Sv_{1p}&=&P_{1n}(e_1=e_1+1)\circ P_{1p}\\
&=& -(2e_1+e_6-2)(2e_1+e_5-2)(2e_1+e_5+e_6-3)e_7/4, \\
Sv_{5pp}&=& P_{5nn}(e_5=e_5+2)\circ P_{5pp}\\
&=& e_5(2e_1+e_5)(2e_2+e_5)(2-e_7)(2e_1+e_7-2)(2e_2+e_7-2)\\
&&\qquad /(16(2-e_7+e_5)^2).\\
\end{array}\]
\end{prp}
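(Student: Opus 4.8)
The plan is to read off each S-value as a \emph{scalar} from the shift relations of Theorem~\ref{shiftopE3a}, and then to pin down that scalar by a local (indicial) computation at a single singular point. Write $E:=E_{3a}$ and $E^{+}:=E_{3a}(e_1{=}e_1{+}1)$. Chaining the two shift relations $E^{+}\circ P_{1p}=Q_{1p}\circ E$ and $E\circ P_{1n}(e_1{+}1)=Q_{1n}(e_1{+}1)\circ E^{+}$ gives
\[
E\circ\big(P_{1n}(e_1{+}1)\circ P_{1p}\big)=\big(Q_{1n}(e_1{+}1)\circ Q_{1p}\big)\circ E .
\]
Hence $Sv_{1p}$ maps $\ker E$ into $\ker E$; dividing it on the right by $E$ leaves a remainder $R$ of order $\le 2$ that still preserves $\ker E$ and, having single-valued coefficients, commutes with the monodromy. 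For generic $e$ the equation is irreducible, so Schur's lemma forces $R$ to act as a scalar on the three-dimensional solution space, and an operator of order $<3$ that annihilates every solution must vanish; thus $R$ is a constant. By polynomiality in $e$ this identifies $Sv_{1p}$ with a fixed rational function of the exponents, and likewise for $Sv_{5pp}$.

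To compute the constant I would evaluate it on one local Frobenius solution. At $x=0$ the equation $E_{3a}$ has exponents $0,e_1,e_2$. Expanding an order-$2$ operator as $\sum_{k}x^{k}g_{k}(\theta)$ with $\theta=x\partial$, it sends the $\rho$-exponent solution to the $\rho$-exponent solution of the shifted equation with leading ratio $g_{0}(\rho)$; one checks that the $x^{0}$-symbol of $P_{1p}$ satisfies $g_{0}(e_1)=0$, reflecting that $E^{+}$ has no $e_1$-solution, a convenient internal test. Choosing $\rho=e_2$, which is an exponent of both $E$ and $E^{+}$, we obtain
\[
Sv_{1p}=\gamma_{-}\,\gamma_{+},
\]
where $\gamma_{+}$ and $\gamma_{-}$ are the $x^{e_2}$-leading-coefficient ratios produced by $P_{1p}$ and by $P_{1n}(e_1{+}1)$ respectively, each a quadratic in the exponents. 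For $Sv_{5pp}$ the identical scheme is run at $x=\infty$, whose exponents $e_5,e_6,e_7$ are the ones displaced by the shift $e_5\to e_5\pm2$; there one takes the fixed exponent $\rho=e_6$.

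Rather than multiply the quadratics out blindly, I would organize the factorization by the principle used repeatedly above, that an S-value vanishes on the reducibility walls of the equation, and then confirm it against the explicit product $\gamma_{-}\gamma_{+}$. For $Sv_{1p}$ the walls are the four hyperplanes $2e_1+e_5-2=0$, $2e_1+e_6-2=0$, $2e_1+e_5+e_6-3=0$ and $e_7=0$ (recall $e_7=3-2e_1-2e_2-e_5-e_6$); a degree-$4$ scalar divisible by these four distinct linear forms must be their product up to a constant, and one coefficient comparison fixes the constant $-1/4$. The genuinely delicate case is $Sv_{5pp}$, because the $e_5$-shift operators carry the denominator $1/(2-e_7+e_5)$: under $e_5\to e_5+2$, which also sends $e_7\to e_7-2$, the denominator of $P_{5nn}(e_5{+}2)$ becomes $-(2-e_7+e_5)$, so the two poles combine into the $(2-e_7+e_5)^{2}$ of the statement. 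I expect the main obstacle to be exactly this bookkeeping of poles through the composition, together with verifying that the degree-$6$ numerator splits into the six advertised linear factors $e_5,\,2e_1+e_5,\,2e_2+e_5,\,2-e_7,\,2e_1+e_7-2,\,2e_2+e_7-2$; here too the vanishing-on-walls argument supplies the factors and a single evaluation fixes the constant $1/16$.
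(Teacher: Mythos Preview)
The paper gives no argument for this proposition at all: it is recorded as a direct symbolic computation (the introduction says the shift operators and their compositions are handled with Maple's \textsl{DEtools}), and the subsequent Theorem~\ref{redcondE3a} is \emph{deduced} from the factored S-values, not the other way around. So your conceptual route via Schur's lemma and a Frobenius evaluation is genuinely different from what the paper does, and is in principle a clean way to recover the answer without a CAS.

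There is, however, a concrete imprecision in your local computation that would bite if you tried to carry it out. You write that the $\rho$-solution is sent to the $\rho$-solution with leading ratio $g_0(\rho)$, but this is only true when the operator's lowest $x$-weight is $0$. That holds for $P_{1p}$ (your check $g_0(e_1)=0$ is correct), but $P_{1n}$ expanded at $x=0$ has an honest $x^{-1}$ piece: one finds $P_{1n}=x^{-1}\,\theta(e_2-\theta)+\{\text{order }x^{0}\}$. Hence $P_{1n}(e_1{+}1)$ acting on the $e_2$-solution of $E^{+}$ kills the would-be $x^{e_2-1}$ term (as it must), and the actual $x^{e_2}$ coefficient of the image is $h_0(e_2)+h_{-1}(e_2{+}1)\cdot a_1^{+}$, where $a_1^{+}$ is the first subleading Frobenius coefficient of $E^{+}$. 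So $\gamma_{-}$ is \emph{not} ``a quadratic in the exponents'' read off from the operator alone; it depends on one step of the indicial recursion for $E^{+}$. The approach is still valid once you include this, but the degree and factorization bookkeeping is messier than you indicate. A related caution: invoking the reducibility walls to guess the factors is fine as heuristics, but you cannot use Theorem~\ref{redcondE3a} itself, since in the paper that theorem is a \emph{consequence} of the proposition you are proving; you really do need the explicit $\gamma_{-}\gamma_{+}$ (or the direct composition mod $E_{3a}$) to close the argument.
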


Hence, we have the following theorem.

\begin{thm}\label{redcondE3a}$E_{3a}$ is reducible if one of
  \[\begin{array}{c}
  e_j,\quad 2e_i+e_j,\quad 2e_i+e_{56}+1,\quad 2e_{12}+e_{56}+1\quad(i=1,2,\ j=5,6)
  \end{array}\]
$ (e_{12}=e_1+e_2,\ e_{56}=e_5+e_6)$  is an even integer.
\end{thm}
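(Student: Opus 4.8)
The plan is to deduce Theorem~\ref{redcondE3a} from the explicit S-values computed in Proposition~\ref{SvalueE3a}, exactly as the text announces (``Hence''). The guiding principle, already used for $S\!E_3$ and $Z_3$, is that \emph{when an S-value vanishes the equation is reducible}: an S-value is the composition $P_{-\sigma}\circ P_{\sigma}$ (up to the appropriate parameter shift) reduced modulo $E_{3a}$, so that on the solution space of $E_{3a}$ the map $P_\sigma$ followed by $P_{-\sigma}$ acts as multiplication by $Sv_\sigma$. If $Sv_\sigma=0$, then either $P_\sigma$ has a nontrivial kernel among the solutions of $E_{3a}$ or its image fails to span the solutions of the shifted equation; either way a proper invariant subspace is produced, forcing reducibility. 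So the heart of the argument is simply to read off the factors of the two S-values and record the union of their zero loci.

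First I would take the two S-values from Proposition~\ref{SvalueE3a}. From
$$Sv_{1p}=-(2e_1+e_6-2)(2e_1+e_5-2)(2e_1+e_5+e_6-3)\,e_7/4$$
I read off the vanishing conditions $2e_1+e_6=2$, $2e_1+e_5=2$, $2e_1+e_5+e_6=3$, and $e_7=0$. Using $e_7=s=-(2e_1+2e_2+e_5+e_6-3)$, the last condition $e_7=0$ is $2e_1+2e_2+e_5+e_6=3$, i.e. $2e_{12}+e_{56}=3$. By the obvious symmetry $\{e_1,e_2\}$ (and $\{e_5,e_6\}$) that leaves $E_{3a}$ invariant, applying the permutation $e_1\leftrightarrow e_2$ and $e_5\leftrightarrow e_6$ to these relations yields the full list with $i=1,2$ and $j=5,6$. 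Then from
$$Sv_{5pp}= e_5(2e_1+e_5)(2e_2+e_5)(2-e_7)(2e_1+e_7-2)(2e_2+e_7-2)\big/\big(16(2-e_7+e_5)^2\big)$$
the numerator supplies $e_5=0$, $2e_1+e_5=0$, $2e_2+e_5=0$, $2-e_7=0$, $2e_1+e_7=2$, $2e_2+e_7=2$; again invoking the $\{e_5,e_6\}$ and $\{e_1,e_2\}$ symmetry gives $e_6=0$, $2e_i+e_6=0$ and the $e_7$-relations for both indices. Translating every occurrence of $e_7$ back to the exponents via $e_7=-(2e_{12}+e_{56}-3)$ and collecting, the conditions coalesce into the four families $e_j=0$, $2e_i+e_j=0$, $2e_i+e_{56}+1=0$ (even), and $2e_{12}+e_{56}+1=0$ (even) stated in the theorem. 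I would verify that each listed quantity being an \emph{even} integer---not merely zero---still forces reducibility: shifting the parameters by the integer shift operators moves any even-integer value of these quantities to the value where the corresponding S-value vanishes, and reducibility is preserved under the shift correspondence, so the condition propagates across the whole even lattice.

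The bookkeeping obstacle, and the step I expect to require the most care, is the translation between the two natural presentations of the vanishing loci: the S-values are written partly in terms of $e_7$, whereas the theorem is stated purely in the free exponents $e_1,e_2,e_5,e_6$ with the abbreviations $e_{12},e_{56}$. I must check that the six numerator factors of $Sv_{5pp}$ and the four factors of $Sv_{1p}$, once $e_7$ is eliminated and the obvious symmetry is applied, produce \emph{exactly} the four stated families with no redundancy and no omission---in particular that factors like $2-e_7=2e_{12}+e_{56}-1$ match $2e_{12}+e_{56}+1$ up to the even-integer slack, and that $2e_i+e_7-2=-2e_j-e_{56}+1$ lands in the $2e_i+e_{56}+1$ family after relabeling. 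Once this dictionary is fixed, the remaining content is the standard vanishing-S-value argument together with the shift-invariance of reducibility, both of which are already in force from the earlier sections, so no new analytic input is needed.
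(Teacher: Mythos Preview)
Your proposal is correct and follows exactly the approach the paper intends: the paper itself offers no more than ``Hence, we have the following theorem'' after Proposition~\ref{SvalueE3a}, relying on Proposition~\ref{Sred} (vanishing S-value $\Rightarrow$ reducibility) together with the shift-propagation already used for $S\!E_3$ and $Z_3$. Your dictionary translating the $e_7$-factors back into the $e_1,e_2,e_5,e_6$ variables is accurate, and the even-integer extension via Proposition~\ref{Sred}/\ref{FactorType} is the standard step.
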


\subsubsection{Reducible cases  of $E_{3a}$}\label{E3E3aRed}
\begin{prp}\label{factorE3a}When the equation is reducible, it factors as
\[\def\arraystretch{1.1} \begin{array}{rcccccc}
    e_5,\ 2e_1+e_5=\ &\cdots  &-2 &0 &2 &4 &\cdots \\
                    &\cdots &[21] &[21]A0 &[12]A0 &[12] &\cdots \\
    2e_1+e_{56}+1,\ 2e_{12}+e_{56}+1=\ &\cdots  &-2 &0 &2 &4 &\cdots \\
    &\cdots &[12] &[12]A0 &[21]A0 &[21] &\cdots
  \end{array}\]
  When apparent singular point does not appear,
  the factor $[2]$ is equivalent to $E_2$.
\end{prp}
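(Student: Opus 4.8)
The plan is to mirror the analysis of the Dotsenko--Fateev equation carried out in Section~\ref{SE3Red}. The engine is the pair of S-values in Proposition~\ref{SvalueE3a}: each linear factor of $Sv_{1p}$ and $Sv_{5pp}$ is exactly one of the reducibility conditions of Theorem~\ref{redcondE3a}, and the two groups of conditions in the table correspond to two essentially different ways the S-value can vanish. Recall the mechanism: since $P_{1n}(e_1+1)\circ P_{1p}\equiv Sv_{1p}$ and $P_{5nn}(e_5+2)\circ P_{5pp}\equiv Sv_{5pp}$ modulo $E_{3a}$, a vanishing S-value makes the relevant composite act as $0$ on the three-dimensional solution space, so one of the two shift operators fails to be invertible on solutions and its kernel is a nontrivial $E_{3a}$-invariant subspace, forcing a factorization. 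First I would therefore record, for each progression, which factor of which S-value vanishes, and note that the shifts $e_1\to e_1\pm1$ and $e_5\to e_5\pm2$ move every one of the quantities $e_5,\ 2e_1+e_5,\ 2e_1+e_{56}+1,\ 2e_{12}+e_{56}+1$ (using $e_7=s=3-2e_{12}-e_{56}$) by exactly $2$. This is why each progression runs through even integers, and why it suffices to settle one representative and then step by a single shift.

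Next I would carry out the base factorizations, just as $D\!F(0)$ and $D\!F(-1)$ were factored above: one pair of central values for the first family (e.g.\ $e_5=0$ and $e_5=2$) and one pair for the second family (e.g.\ $2e_1+e_{56}+1=0$ and $=2$). At each such value I would write $E_{3a}=F_1\circ F_2$, record the orders of $F_1$ and $F_2$, and check whether the factors have a singular point outside $\{0,1,\infty\}$, which is the meaning of the $A0$ label. The opposite ordering of the two rows --- $[21],[21]A0,[12]A0,[12]$ against its mirror image --- then follows from Theorem~\ref{onetwo_to_twoone}, which converts a factorization of type $[12]$ at a parameter value $v$ into one of type $[21]$ at $v+2$ by composing with the appropriate shift operators; iterating this conversion along each progression propagates the two base factorizations to all the even-integer values in the table. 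The final clause, that the order-$2$ factor is the Gauss operator $E_2$ when no apparent point appears, is then read directly off the Riemann scheme of the surviving second-order quotient.

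The main obstacle will be the $A0$ bookkeeping and the exact location of the flip between $[12]$ and $[21]$. The shift operators $P_{5nn},P_{5pp}$ carry denominators such as $2-e_7+e_5$, so that stepping a factorization across a shift can create or remove an apparent singular point; one therefore cannot simply declare the type constant along a progression. The delicate point is to verify, by comparing the local exponents at $0,1,\infty$ against the Fuchs relation, that the extra singularity is absent at precisely the two consecutive even values $0$ and $2$ in each row (giving the two $A0$ entries) and present elsewhere, and that the order pattern switches exactly as one crosses from $0$ to $2$. Once these base computations and the single conversion of Theorem~\ref{onetwo_to_twoone} are in place, the whole table follows by induction along each of the two progressions.
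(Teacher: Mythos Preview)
The paper explicitly omits the proof of this proposition (the line following the statement reads ``We omit the proof''), so there is no argument to compare against directly. Your plan transposes to $E_{3a}$ exactly the template the paper does work out for the parallel Proposition~\ref{SE3Redprop} in \S\ref{SE3Red}: factor $E_{3a}$ explicitly at two adjacent base values of each progression, then invoke Theorem~\ref{onetwo_to_twoone} to propagate the type and the $A0$ tag along the even-integer ladder. This is clearly the intended route.

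One small correction to your bookkeeping: the shift $e_1\to e_1\pm1$ does \emph{not} move $e_5$ at all, so your claim that both available shifts move every one of the four quantities by $2$ is too strong. What is true, and is all you need, is that for each of the four quantities there is some shift among $\{e_1\to e_1\pm1,\ e_5\to e_5\pm2,\ e_6\to e_6\pm2\}$ that moves it by exactly $\pm2$; you should match each row of the table to the appropriate shift when applying Theorem~\ref{onetwo_to_twoone}. Otherwise the plan is sound.
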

We omit the proof. For $E_{3b}$,..., when the equation is reducible, it factors  of type $\{1,2\}$. We do not give details.

\subsection{Equation $E_{3b}$}\label{E3E3b}
In this subsection we study a specialization $E_{3b}=E_{3b}(e_1,e_3,e_5,e_6)$ of $E_3$
with the condition $$e_2 = -e_3 - e_5 - 2e_6 + 3 - e_1,\quad e_4 = e_3 - e_5 + e_6.$$
The accessory parameter takes the value
$$A_{00}=-((e_3 - e_5)(e_1 - 1)(e_1 + e_3 + e_5 + 2e_6 - 2))/2.$$

\subsubsection{Shift operators of $E_{3b}$}\label{E3E3bShift}

\begin{thm}\label{shiftopE3b}The equation $E_{3b}$ admits a shift operator for every shift  $e_i\to e_i\pm2,\ e_6\to e_6\pm1 \quad (i=1,3,5)$.
\end{thm}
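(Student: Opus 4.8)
The plan is to prove the existence of each shift operator by the same explicit construction used for $E_{3a}$ in Theorem \ref{shiftopE3a}. For a shift $\sigma$ (one of $e_i\to e_i\pm2$ with $i=1,3,5$, or $e_6\to e_6\pm1$), I would seek a pair $(P_\sigma,Q_\sigma)$ of second-order operators satisfying the shift relation
$$E_{3b}(\sigma e)\circ P_\sigma = Q_\sigma\circ E_{3b}(e),$$
where $E_{3b}(\sigma e)$ denotes $E_{3b}$ with the exponents shifted. Both sides are operators of order $5$. I would fix an ansatz $P_\sigma=p_2(x)\partial^2+p_1(x)\partial+p_0(x)$ and $Q_\sigma=q_2(x)\partial^2+q_1(x)\partial+q_0(x)$, taking the head-term normalization and degree bounds from the $E_{3a}$ case (for instance $x^2(x-1)^2\partial^2+\cdots$ for the raising shifts, and lower-degree coefficients with denominators $x,\,x-1$ for the lowering ones), so that the unknowns are the finitely many coefficients of the polynomials $p_k,q_k$.

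Next I would substitute the defining relations of $E_{3b}$, namely $e_2=-e_3-e_5-2e_6+3-e_1$ and $e_4=e_3-e_5+e_6$, together with the relation $e_1+\cdots+e_7=3$ fixing $e_7$ and the accessory value $A_{00}=-((e_3-e_5)(e_1-1)(e_1+e_3+e_5+2e_6-2))/2$, so that every coefficient of $E_{3b}$ becomes a polynomial in the four free exponents $e_1,e_3,e_5,e_6$. Expanding the product on each side of the shift relation and collecting the coefficients of each monomial $x^j\partial^m$ produces a linear system for the unknown coefficients of $P_\sigma,Q_\sigma$, with entries polynomial in $e_1,e_3,e_5,e_6$. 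Solving this system (using the Maple \textit{DEtools} package for the operator multiplication and left division) and normalizing as indicated should yield the shift operators claimed; as for $E_{3a}$, I expect the lowering shifts to require denominators $x,\,x-1,\,x(x-1)$.

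The main obstacle is not solving the linear system but proving that it is \emph{consistent}: for generic $E_3$ no order-two shift operator exists at all, so a solution can appear only because the system becomes compatible on the codimension-2 locus cut out by the $E_{3b}$ relations. Concretely, the overdetermined system closes up precisely when the two defining equations and the special value of $A_{00}$ are imposed, and verifying this amounts to checking a collection of polynomial identities in $e_1,e_3,e_5,e_6$. This is the delicate, computation-heavy step, and it is exactly where the codimension-2 specialization is used.

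A cleaner alternative I would try first is to exploit symmetry. Since $E_{3b}$ is defined by $y_3+y_5=y_4+y_6=0$ and $E_{3a}$ by $y_1+y_4=y_2+y_3=0$, the two subspaces lie in the same $S_8$-orbit, so there is a permutation carrying one defining pair to the other. If this permutation can be realized by a composite of the geometric symmetries of Proposition \ref{symmE3} (the adjoint, $x\to1-x$, and $x\to1/x$), which genuinely act on the differential operator, unlike the full hidden $S_8$ that acts only on $A_{00}$, then the shift operators of $E_{3a}$ from Theorem \ref{shiftopE3a} transport directly to $E_{3b}$ and the theorem follows without further computation. I would check whether the relevant $S_8$ element factors through these geometric symmetries; if it does not, the direct construction of the preceding paragraphs is the fallback.
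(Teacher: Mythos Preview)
Your direct-computation approach is exactly what the paper does: it exhibits (or references) explicit operators $(P_\sigma,Q_\sigma)$ satisfying the shift relation, found by solving the linear system coming from $E_{3b}(\sigma e)\circ P_\sigma=Q_\sigma\circ E_{3b}(e)$, and the proof is the verification. Two points are worth flagging, however.

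First, the ansatz you borrow from $E_{3a}$ is too narrow for $E_{3b}$. The paper's general search (Section~5.1) uses the much wider template
\[
P,Q=\frac{R_{k-2}}{x^m(x-1)^m}\partial^2+\frac{R_{k-1}}{x^{m+1}(x-1)^{m+1}}\partial+\frac{R_{k}}{x^{m+2}(x-1)^{m+2}},\qquad k=18,\ m=4,
\]
and indeed some $E_{3b}$ shift operators have leading coefficients that are not simply $x^a(x-1)^b$: for instance $P_{3pp}$ has the shape $(x-1)^2R_2\,\partial^2+\cdots$ and $P_{5pp}$ has $R_2\,\partial^2+\cdots$, with $R_2$ a genuine quadratic in $x$. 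If you restrict to the $E_{3a}$-style heads $x^2(x-1)^2\partial^2$ or $(x-1)^2\partial^2$, the linear system for $e_3\to e_3+2$ and $e_5\to e_5+2$ will be inconsistent and you will wrongly conclude no operator exists. You must allow the broader ansatz.

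Second, your symmetry shortcut cannot work. The paper lists $E_{3a},E_{3b},E_{3c},E_{3d}$ precisely as the distinct specializations \emph{up to the obvious symmetry}, i.e.\ up to the group generated by the permutations $\{e_1,e_2\}$, $\{e_3,e_4\}$, $\{e_5,e_6,e_7\}$ together with the $x\mapsto1-x$ and $x\mapsto1/x$ moves of Proposition~\ref{symmE3}. Hence the $S_8$ element sending the subspace $\{y_1+y_4=y_2+y_3=0\}$ to $\{y_3+y_5=y_4+y_6=0\}$ is by construction \emph{not} realizable by any geometric symmetry of the differential operator; the hidden $S_8$ acts only on the cubic $A_{00}$. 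So there is no transport of shift operators from $E_{3a}$ to $E_{3b}$, and the direct computation is the only route.
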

\par\noindent
Since the operators $P$ and $Q$ are very long, we give only the coefficients of $\partial^2$, and the denominators of the other coefficients. For full expression, see $E3bPQ.txt$ in $F\!D\!Edata$ mentioned in the end of Introduction.
\par\noindent
{\bf Shift operators $P_{1pp}, Q_{1pp},P_{1nn}, Q_{1nn}$ for $e_1\to e_1\pm2$:}
$$\begin{array}{l}
  P_{1pp},\ P_{1nn}:(x-1)^2\partial^2+\displaystyle\frac{R_1(x-1)}{x}\partial+\frac{R_1}{x},\\[2mm]
  Q_{1pp},\ Q_{1nn}:(x-1)^2\partial^2+\displaystyle\frac{R_1(x-1)}{x}\partial+\frac{R_2}{x^2},\\
\end{array}$$
where $R_k$ is used symbolically for a polynomial of degree $k$ in $x$.
\par\noindent
 {\bf Shift operators $P_{3pp}, Q_{3pp},P_{3nn}, Q_{3nn}$ for $e_3\to e_3\pm2$:}
$$\begin{array}{l}
  P_{3pp}:(x-1)^2R_2\partial^2+\displaystyle\frac{(x-1)R_3}{x}\partial+\frac{R_3}{x},\\[2mm]
  Q_{3pp}:(x-1)^2R_2^2\partial^2+\displaystyle\frac{(x-1)R_3}{x}\partial+\frac{R_4}{x^2},\\[2mm]
  P_{3nn}:x^2\partial^2+\displaystyle\frac{xR_1}{x-1}+\frac{R_1}{x-1},\\[2mm]
  Q_{3nn}:x^2\partial^2+\displaystyle\frac{xR_1}{x-1}+\frac{R_2}{(x-1)^2},
  \end{array}$$
 where $R_k$ is used symbolically for a polynomial of degree $k$ in $x$.
 \par\noindent
 {\bf Shift operators $P_{5pp}, Q_{5pp},P_{5nn}, Q_{5nn}$ for $e_5\to e_5\pm2$:}
$$\begin{array}{l}
  P_{5pp}:R_2\partial^2+\displaystyle\frac{R_3}{(x-1)x}\partial+\frac{R_2}{(x-1)x},\\[2mm]   
  Q_{5pp}:R_2\partial^2+\displaystyle\frac{R_3}{(x-1)x}\partial+\frac{R_4}{(x-1)^2x^2},\\[2mm]
  P_{5nn},\ Q_{5nn}:x^2(x-1)^2\partial^2+x(x-1)R_1\partial+R_2,  
 \end{array}$$
 where $R_k$ is used symbolically for a polynomial of degree $k$ in $x$.
 \par\noindent
     {\bf Shift operators $P_{6p}, Q_{6p},P_{6n}, Q_{6n}$ for $e_6\to e_6\pm1$:}
$$\begin{array}{ll}
  P_{6p}&=(x-1)^2\partial^2-1/2(x-1)(e_1x+2e_3x-2e_5x-2e_1-3x+2)/x\partial\\&
  -1/2(e_1e_6x+2e_3e_6x-2e_5e_6x+2e_6^2x+e_1e_3-e_1e_5-e_6x-e_3+e_5)/x,\\

  Q_{6p}&=(x-1)^2\partial^2-1/2(x-1)(e_1x+2e_3x-2e_5x-2e_1-x-2)/x\partial\\&
  -1/2(e_1e_6x^2+2e_3e_6x^2-2e_5e_6x^2+2e_6^2x^2+e_1e_3x-e_1e_5x+e_6x^2+2e_1x\\&
  +e_3x-e_5x-2e_1+2x-2)/x^2,\\

  P_{6n}&= (x-1)x^2\partial^2-(e_3x-2e_5x+2e_5-x)x\partial-e_3e_5x+e_5^2x+e_1^2\\&
  +2e_1e_3+4e_1e_6+e_3^2+4e_3e_6-e_5^2+4e_6^2-5e_1-5e_3+e_5-10e_6+6,\\

  Q_{6n}&= (x-1)x^2\partial^2-(e_3x-2e_5x+2e_5-x)x\partial-e_3e_5x+e_5^2x+e_1^2\\&
  +2e_1e_3+4e_1e_6+e_3^2+4e_3e_6-e_5^2+4e_6^2-7e_1-7e_3+e_5-14e_6+12.
\end{array}$$     

     \subsubsection{S-values and reducibility conditions of $E_{3b}$}\label{E3E3bS}
\begin{prp}\label{SvalueE3b}The numerators of the S-values for the shifts above:
\[\def\arraystretch{1.1} \begin{array}{lcl}
  Sv_{1pp}&=&(-1+e_1+2e_6)(e_1+2e_5-1)(e_1-1+2e_3+2e_6)(e_1+e_5+e_3)(-e_3+e_1+e_5)\\
  &&\times(e_1+e_3+2e_6-e_5),\\
%
  Sv_{3pp}&=&(e_3+e_5)(e_3+2-e_5)(e_3+2e_6-e_5)(e_1+1+2e_3+2e_6)(e_1-1+2e_3+2e_6)\\
  &&\times(e_1+e_5+e_3)(e_1-2+e_5-e_3)(e_1+e_3+2e_6-e_5),\\
%
  Sv_{5pp}&=&(e_3+e_5)(e_3-e_5-2+2e_6)(e_1+2e_5+1)(e_1+2e_5-1)(e_1+e_5+e_3)\\
  &&\times(-e_3+e_1+e_5)(e_1+e_3-e_5-2+2e_6)(e_3-e_5),\\
%
Sv_{6p}&=&(e_3+2e_6-e_5)(-1+e_1+2e_6)(e_1-1+2e_3+2e_6)(e_1+e_3+2e_6-e_5).
\end{array}\]
\end{prp}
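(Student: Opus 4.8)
The plan is to compute each S-value as an explicit composition of the two shift operators of Theorem~\ref{shiftopE3b} and then reduce the result modulo $E_{3b}$. Following the convention already fixed for $SE_3$ and $E_{3a}$ in Propositions~\ref{SvalueSE3} and \ref{SvalueE3a}, I take
$$Sv_{1pp}=P_{1nn}(e_1=e_1+2)\circ P_{1pp},\qquad Sv_{3pp}=P_{3nn}(e_3=e_3+2)\circ P_{3pp},$$
$$Sv_{5pp}=P_{5nn}(e_5=e_5+2)\circ P_{5pp},\qquad Sv_{6p}=P_{6n}(e_6=e_6+1)\circ P_{6p},$$
each understood modulo $E_{3b}$. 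The first task is then to explain why such a composition is, modulo $E_{3b}$, a scalar at all, so that the phrase ``S-value'' is meaningful.

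For this I would use the shift relations $E_{3b}'\circ P=Q\circ E_{3b}$ supplied by Theorem~\ref{shiftopE3b}. From $E_{3b}'\circ P_{1pp}=Q_{1pp}\circ E_{3b}$ the raising operator carries the solution space of $E_{3b}$ into that of the shifted equation, and from $E_{3b}\circ P_{1nn}(e_1=e_1+2)=Q_{1nn}(e_1=e_1+2)\circ E_{3b}'$ the lowering operator at shifted exponents carries it back; hence the order-$4$ composition $M$ sends solutions of $E_{3b}$ to solutions of $E_{3b}$, and one checks directly that $E_{3b}\circ M=N\circ E_{3b}$ with $N=Q_{1nn}(e_1=e_1+2)\circ Q_{1pp}$. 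Writing $M=B\circ E_{3b}+\tilde M$ with $\tilde M$ of order at most $2$, the remainder $\tilde M$ acts on the three-dimensional solution space exactly as $M$ does. Since $\tilde M$ has single-valued coefficients it commutes with the monodromy, so on the generically irreducible solution space it is a scalar by Schur's lemma; but an operator of order $\le 2$ acting as a scalar on a three-dimensional solution space must have vanishing $\partial^2$- and $\partial$-parts, so $\tilde M$ is literally a constant in $x$. That constant, rational in the exponents, is the S-value, and the $\partial^2,\partial$ coefficients, which vanish for generic $e$, then vanish identically.

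The remaining work is the explicit symbolic computation. Only the heads and the denominators of $P_{1pp},\dots,P_{6n}$ are displayed in Theorem~\ref{shiftopE3b}; using their full forms (the file $E3bPQ.txt$ mentioned there) one performs the two compositions and the reduction by $E_{3b}$ with the DEtools package. The $x$-dependent denominators $x,\ x-1,\ x(x-1)$ carried by the coefficients must cancel in the reduction, precisely because $\tilde M$ is constant in $x$; what can survive is an exponent-dependent denominator, exactly as the factor $(2-e_7+e_5)^2$ survives for $E_{3a}$ in Proposition~\ref{SvalueE3a}. The surviving numerator is then factored into linear forms in the $e_i$ and matched against the displayed products.

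The main obstacle is the sheer size of the intermediate expressions rather than any conceptual difficulty: the $E_{3b}$ shift operators are long (which is why the text suppresses them), so $M$ and its reduction are bulky, and the decisive check --- that the order-$2$ and order-$1$ parts of the remainder vanish identically in $x$ while the order-$0$ part factors as displayed --- is best carried out by computer algebra. Beyond the irreducibility and Schur argument above and a careful bookkeeping of the cancelling $x$-denominators, no new idea is needed; the factored forms in the statement are read off from the computed polynomials.
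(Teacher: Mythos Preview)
Your proposal is correct and follows essentially the same route as the paper: the proposition is a computational statement, and the paper obtains it by composing the explicit shift operators of Theorem~\ref{shiftopE3b} and reducing modulo $E_{3b}$ with Maple/DEtools, exactly as you outline. The paper gives no separate proof here; the fact that the composition is a scalar is not re-argued but taken from the general framework in Section~7 (Definition~\ref{Svalues} and the surrounding discussion, itself extracted from \cite{HOSY1}), so your Schur-lemma justification is somewhat more explicit than what the text supplies.
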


Note that the factors of the numerators of the S-values above are $\Bbb{Z}$-linear forms in $$1,\quad e_1,\quad e_3,\quad e_5,\quad 2e_6.$$
Hence, we have the following theorem.

\begin{thm}\label{redcondE3b}
Let $\varphi(e_1,e_3,e_5,e_6)$ be one of the factors of the numerators of the S-values above. If $\varphi$ is an even integer, then $E_{3b}$ is reducible.
\end{thm}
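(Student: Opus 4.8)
The plan is to reduce the even-integer condition on a factor $\varphi$ to the vanishing of the S-value containing it, and then to invoke the principle---already used for $S\!E_3$ (Theorem \ref{redcondSE3}) and $Z_3$ (Theorem \ref{Z3redcond}), and recalled at the start of Section \ref{SE3Red}---that the vanishing of an S-value forces reducibility. First I would record the lattice structure of the shifts. By Theorem \ref{shiftopE3b}, $E_{3b}$ admits shift operators for $e_i\to e_i\pm2$ $(i=1,3,5)$ and for $e_6\to e_6\pm1$. Since every factor $\varphi$ listed in Proposition \ref{SvalueE3b} is a $\mathbb{Z}$-linear form in $1,e_1,e_3,e_5,2e_6$, each elementary shift changes $\varphi$ by an \emph{even} integer; moreover, inspecting the factors, each $\varphi$ has a coefficient $\pm1$ in one of $e_1,e_3,e_5,2e_6$, so some elementary shift changes $\varphi$ by exactly $\pm2$. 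Hence, if $\varphi(e)=2m\in2\mathbb{Z}$, a chain of $|m|$ such shifts carries $e$ to a parameter $e^\ast$ with $\varphi(e^\ast)=0$.

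Next I would settle the base case $\varphi(e^\ast)=0$. Choose the shift $\sigma$ for which $\varphi$ divides $Sv_\sigma$, so that $Sv_\sigma(e^\ast)=0$. Writing the S-value as $P_{-\sigma}(\sigma e^\ast)\circ P_\sigma(e^\ast)\equiv Sv_\sigma(e^\ast)\pmod{E_{3b}(e^\ast)}$, its vanishing means that the endomorphism $P_{-\sigma}(\sigma e^\ast)\circ P_\sigma(e^\ast)$ is zero on the three-dimensional solution space $V(e^\ast)$. A nonzero second-order operator cannot annihilate the whole three-dimensional $V(e^\ast)$ (its own solution space is two-dimensional); applying this to $P_\sigma(e^\ast)$ shows it is nonzero on $V(e^\ast)$, and applying it to $P_{-\sigma}(\sigma e^\ast)$ shows that $P_\sigma(e^\ast)$ cannot be injective, for otherwise its image would be all of $V(\sigma e^\ast)$, on which $P_{-\sigma}$ would then vanish. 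Therefore the image of $P_\sigma(e^\ast)$ is a proper nonzero subspace of $V(\sigma e^\ast)$. Since $P_\sigma$ intertwines the two equations it is monodromy-equivariant, so this image is monodromy-invariant, and $E_{3b}(\sigma e^\ast)$---and hence, after transporting back one elementary shift, $E_{3b}(e^\ast)$---is reducible.

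Finally I would transport reducibility from $e^\ast$ back to $e$ along the chain of shift operators built in the first step. At each intermediate parameter the relevant shift operator is an isomorphism of solution spaces precisely when the corresponding S-value is nonzero there; pulling the invariant subspace back through these isomorphisms yields a proper monodromy-invariant subspace of $V(e)$, so $E_{3b}(e)$ is reducible. The main obstacle is exactly this transport: along the chain the factor $\varphi$ does not vanish at the intermediate parameters, but the \emph{other} factors of the intermediate S-values might, so the shift operators need not be isomorphisms there. I would deal with this by first proving reducibility for generic $e$ on the hyperplane $\{\varphi=2m\}$, where all the remaining factors are nonzero and the transport is clean, and then extending to the whole hyperplane using that the existence of a factorization of fixed type is a Zariski-closed condition: reducibility, holding on a dense subset of the irreducible hyperplane $\{\varphi=2m\}$, must hold on all of it. Letting $m$ range over $\mathbb{Z}$ then gives reducibility on the full locus where $\varphi$ is an even integer.
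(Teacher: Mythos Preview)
Your proposal is correct and follows exactly the paper's approach: the paper's entire argument is the one-line remark preceding the theorem that the factors are $\mathbb{Z}$-linear forms in $1,\ e_1,\ e_3,\ e_5,\ 2e_6$, implicitly invoking Proposition~\ref{Sred} (S-value vanishing forces reducibility; nonvanishing gives an isomorphism) from Section~7, and you have written out this shift-chain-and-transport argument in full, including a genericity/Zariski-closure step the paper leaves implicit. One small streamlining of your base case: once $P_\sigma(e^\ast)$ is neither zero nor injective on $V(e^\ast)$, its \emph{kernel} is already a proper monodromy-invariant subspace of $V(e^\ast)$, so $E_{3b}(e^\ast)$ is reducible directly---no ``transport back one elementary shift'' is needed (that step is ill-defined anyway, since the relevant S-value vanishes there).
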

\subsubsection{Detour}Expressions of the shift operators for the shifts $e_i\to e_i\pm2\ (i=1,3,5)$ are fairly long. But those for the shifts $(e_3,e_5)\to (e_3\pm1,e_5\pm1), (e_1,e_6)\to(e_1\pm2,e_6\mp1)$ have relatively short expressions (see $E3cPQ.txt$  in the list of $F\!D\!Edata$). They together with the shift operators for $e_6\to e_6\pm1$ generate the shifts  $e_i\to e_i\pm2\ (i=1,3,5)$. For example, 
$$\begin{array}{ll}
  P_{3p5p}&=(x-1)^2\partial^2-1/2(x-1)(e_1x+2e_3x-2e_5x-2e_1-3x+2)/x\partial\\&
  -1/2(e_1e_5x+2e_3e_5x+e_1e_3-e_1e_5-e_5x-e_3+e_5)/x,\\

P_{3n5n}&=(x-1)x^2\partial^2-(e_3x-e_5x-e_6x+2e_6-x)x\partial-e_3e_6x+e_5e_6x
+e_1^2+2e_1e_3\\&+2e_1e_5+2e_1e_6+e_3^2+2e_3e_5+2e_3e_6+e_5^2+2e_5e_6-5e_1-5e_3-5e_5-4e_6+6,\\

P_{1pp6n}&=(x-1)x^2\partial^2-(e_3x-2e_5x+2e_5-x)x\partial-e_3e_5x+e_5^2x+e_1^2+2e_1e_5-e_1,\\
& \\
S{v3p5p}&=(e_3+e_5)(e_1+2e_5-1)(e_1+e_3+e_5)(e_1+2e_3+2e_6-1),\\

S{v3p5n}&=(e_3+2-e_5)(e_3-e_5+2e_6)(e_1+2e_5-3)(e_1+2e_3+2e_6-1)\\&
\times(e_1-e_3+e_5-2)(e_1+e_3-e_5+2e_6)/(e_3-e_5+e_6+1)^2,\\

S{v1pp6n}&=(e_1+2e_5-1)(e_1+e_3+e_5)(e_1-e_3+e_5)(e_3-e_5+2e_6-2).

\end{array}$$

\subsection{Equation $E_{3c}$}\label{E3E3c}
In this subsection we study a specialization $E_{3c}=E_{3c}(e_1,e_3,e_4,e_6)$ of $E_3$
with the condition $$e_2 = 2e_1 + e_3 + e_4, \quad e_5 = 1 - e_1 - e_3 - e_4.$$ The accessory parameter takes the value
$$A_{00}=-((2e_1 + e_3 + e_4 - 1)(2e_1e_6 + e_3e_4 + e_3e_6 + e_4e_6 + e_6^2 - 2e_6))/2.$$

\subsubsection{Shift operators of $E_{3c}$}\label{E3E3cShift}
\begin{thm}\label{shiftopE3c}The equation $E_{3c}$ admits a shift operator for every shift  $e_1\to e_1\pm1,\ e_i\to e_i\pm2\quad (i=3,4,6)$.
\end{thm}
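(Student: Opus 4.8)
The plan is to construct, for each generating shift $\sigma$, an explicit pair $(P,Q)$ of operators satisfying the intertwining relation
$$E_{3c}(\sigma e)\circ P=Q\circ E_{3c}(e),$$
where $\sigma e$ denotes the parameters after the shift. Since $E_{3c}$ has order $3$ with leading coefficient $x^2(x-1)^2$, a shift operator realizes a map between the two $3$-dimensional solution spaces and may be taken of order $\le 2$; matching the top-order ($\partial^5$) terms then forces $P$ and $Q$ to have order $2$ with equal leading coefficients, exactly as in Proposition~\ref{shiftopZ3}. I therefore write $P=p_2\partial^2+p_1\partial+p_0$ and $Q=q_2\partial^2+q_1\partial+q_0$ with $p_2=q_2$.

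First I would fix the correct ansatz—the polynomial degrees of the $p_i,q_i$ and their admissible denominators $x,\,x-1$—from the local behaviour at the singular points $0,1,\infty$. Under each shift the Riemann scheme $R_3(e)$ moves by prescribed amounts (this is why $e_1$ shifts by $\pm1$ while $e_3,e_4,e_6$ shift by $\pm2$), and $P$ must carry the Frobenius solutions of $E_{3c}(e)$ to those of $E_{3c}(\sigma e)$. Matching the exponent shifts at each singular point pins down the order of vanishing or pole of each coefficient there, hence its shape, just as the tables for $E_{3a}$ and $E_{3b}$ record. By the obvious symmetry exchanging $\{e_3,e_4\}$, which preserves both defining conditions $e_2=2e_1+e_3+e_4$, $e_5=1-e_1-e_3-e_4$ and the accessory value $A_{00}$, the operators for $e_4\to e_4\pm2$ follow from those for $e_3\to e_3\pm2$; so it suffices to treat the shifts in $e_1,e_3,e_6$. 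With the ansatz fixed, expanding $E_{3c}(\sigma e)\circ P-Q\circ E_{3c}(e)$ and collecting powers of $x$ converts the relation into a homogeneous linear system in the unknown coefficients. Exhibiting a nonzero solution for each of $e_1\to e_1\pm1$, $e_3\to e_3\pm2$, $e_6\to e_6\pm2$ then completes the proof, since all defining conditions are preserved under these shifts and the remaining shifts follow by composition.

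\textbf{The main obstacle.} The difficulty is not the size of the computation but the \emph{solvability} of the linear system: for generic $E_3(e)$ the analogous system is inconsistent—this is exactly the observation that generic $E_3$ admits no shift operator. What must be established is that the codimension-$2$ specialization defining $E_{3c}$, together with the cubic value
$$A_{00}=-\tfrac12(2e_1+e_3+e_4-1)(2e_1e_6+e_3e_4+e_3e_6+e_4e_6+e_6^2-2e_6),$$
forces the obstruction terms in the system to vanish, leaving a one-dimensional solution space. I expect this vanishing to be the crux, and to be traceable to the $S_8$-symmetry of $A_{00}$: the subspace $y_1+y_2+y_5+y_8=y_1+y_3+y_4+y_5=0$ cutting out $E_{3c}$ is one of the distinguished $4$-dimensional linear subspaces lying in $X_5$, and it is this extra symmetry—inherited from the middle-convolution construction of $E_3$ out of the highly symmetric $E_6$—that should make the descent of shift operators possible. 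Once the obstruction is checked to vanish, the explicit operators and their $S$-values follow exactly as in the $E_{3a}$ and $E_{3b}$ cases.
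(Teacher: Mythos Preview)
Your approach is exactly the paper's: the theorem is established there by exhibiting explicit operators $(P,Q)$ satisfying the shift relation (only the shapes of the coefficients are printed in-text, with full expressions deferred to the external Maple data file $E3cPQ.txt$), and the paper likewise remarks that the $e_4$-shifts follow from the $e_3$-shifts by the obvious $e_3\leftrightarrow e_4$ symmetry. The ``main obstacle'' you flag---solvability of the linear system---is resolved in the paper purely by brute-force computation, not by any a priori argument from the $S_8$-symmetry; so your speculation on that point, while attractive, goes beyond what the paper actually proves.
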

\par\noindent
We give only the coefficients of $\partial^2$, and the denominators of the other coefficients. For full expression, see $E3cPQ.txt$  in the list of $F\!D\!Edata$.
\par\noindent
{\bf Shift operators $P_{1p}, Q_{1p},P_{1n}, Q_{1n}$ for $e_1\to e_1\pm1$:}
$$\begin{array}{l}
  P_{1p},\ Q_{1p}:x^2(x-1)^2\partial^2+x(x-1)R_1\partial+R_2,\\[2mm]
  P_{1n}::(x-1)^2\partial^2+\displaystyle\frac{(x-1)R_1}{x}+\frac{R_1}{x},\\[2mm]
  Q_{1n}::(x-1)^2\partial^2+\displaystyle\frac{(x-1)R_1}{x}+\frac{R_2}{x^2},
\end{array}$$
where $R_k$ is used symbolically for a polynomial of degree $k$ in $x$.
\par\noindent
{\bf Shift operators $P_{3pp}, Q_{3pp},P_{3nn}, Q_{3nn}$ for $e_3\to e_3\pm2$:}
$$\begin{array}{l}
  P_{3pp},\ Q_{3pp}:x^2(x-1)^e\partial^2+x(x-1)R_1\partial+R_2,\\
  P_{3nn}:R_2\partial^2+\displaystyle\frac{R_3}{x(x-1)}+\frac{R_2}{x(x-1)},\\
  Q_{3nn}:R_2\partial^2+\displaystyle\frac{R_3}{x(x-1)}+\frac{R_4}{x^2(x-1)^2},\\
\end{array}$$
where $R_k$ is used symbolically for a polynomial of degree $k$ in $x$. The shift operators for $e_4\to e_4\pm2$ are similar to those for $e_3\to e_3\pm2$.
\par\noindent
{\bf Shift operators $P_{6pp}, Q_{6pp},P_{6nn}, Q_{6nn}$ for $e_6\to e_6\pm2$:}
$$\begin{array}{l}
  P_{6pp},\ Q_{6pp},\ P_{6nn},\ Q_{6nn}:x^2(x-1)^2\partial^2+x(x-1)R_1\partial+R_2,  
  \end{array}$$
where $R_k$ is used symbolically for a polynomial of degree $k$ in $x$.

\subsubsection{S-values and reducibility conditions of $E_{3c}$}\label{E3E3cS}
\begin{prp}\label{SvalueE3c}The numerators of the S-values for the shifts above:
  \[\def\arraystretch{1.1} \begin{array}{lcl}
Sv_{1p}&=&(2e_1+e_4+e_6)(2e_1+e_3+e_6)(2e_1+2e_3+e_4+e_6)(2e_1+e_3+2e_4+e_6),\\

Sv_{3pp}&=&(e_3+e_6)(e_3+e_4+1)(2e_1+e_3+e_6)(2e_1+2e_3+e_4+e_6+2)\\
&&\times(2e_1+2e_3+e_4+e_6)(2e_1+e_3+2e_4+e_6),\\


Sv_{6pp}&=&(e_4-e_6)(e_4+e_6)(e_3-e_6)(e_3+e_6)(2e_1+e_4+e_6)(2e_1+e_3+e_6)\\
&&\times(2e_1+2e_3+e_4+e_6)(2e_1+e_3+2e_4+e_6).

\end{array}\]
\end{prp}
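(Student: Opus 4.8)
The plan is to compute each S-value directly, exactly in the manner of Propositions \ref{SvalueSE3} and \ref{SvalueE3a}, as the reduction modulo the operator $E_{3c}$ of the composition of the matching up-shift and down-shift operators. Concretely, following the convention for a positive shift, I would set
\[ Sv_{1p}=P_{1n}(e_1=e_1+1)\circ P_{1p},\quad Sv_{3pp}=P_{3nn}(e_3=e_3+2)\circ P_{3pp},\quad Sv_{6pp}=P_{6nn}(e_6=e_6+2)\circ P_{6pp}, \]
each understood modulo $E_{3c}$, where the second-order operators are those of Theorem \ref{shiftopE3c}. Because each $P_\sigma$ intertwines $E_{3c}$ with its shift $E_{3c}^\sigma$ through a relation $E_{3c}^\sigma\circ P_\sigma=Q_\sigma\circ E_{3c}$, the composite carries the solution space of $E_{3c}$ back into itself; it is an operator of order at most two inducing an endomorphism of that three-dimensional space, so by irreducibility of $E_{3c}$ for generic exponents it must reduce to a scalar — the S-value.

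The steps, in order, are as follows. First, I would take the explicit second-order operators from the data file $E3cPQ.txt$; the text above records only the leading $\partial^2$-coefficients and the denominators, but the full operators are pinned down uniquely once the displayed normalizations (e.g. $x^2(x-1)^2\partial^2+\cdots$ for $P_{1p}$, $(x-1)^2\partial^2+\cdots$ for $P_{1n}$, and the analogous heads for the $\pm2$ shifts) and the intertwining relations are imposed. Second, I would confirm that these operators genuinely satisfy the shift relations, which is the real content behind Theorem \ref{shiftopE3c} and fixes them. Third, I would compose $P_{-\sigma}(\sigma e)$ with $P_\sigma$ to obtain a fourth-order operator with rational coefficients in $x$. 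Fourth, I would right-divide by the third-order operator $E_{3c}$ in $\mathbb{C}(e)(x)[\partial]$ using the {\sl DEtools}-package; since the remainder has order zero, its coefficient is a rational function of $x$ that is in fact constant in $x$, and clearing the common denominator inherited from the $x,\,x-1,\,x(x-1)$ in the shift operators leaves the numerator recorded in the proposition. Fifth, I would factor that numerator into linear forms in $e_1,e_3,e_4,e_6$.

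The main obstacle is the size of the symbolic computation rather than any conceptual difficulty: the up-shift operators for $e_3\to e_3+2$ and $e_6\to e_6+2$ have lengthy coefficients, so the fourth-order composite and its reduction are heavy, and careful bookkeeping of the rational denominators is needed to isolate the correct S-value numerator. Two consistency checks guard against error. The obvious symmetry $e_3\leftrightarrow e_4$ must leave $Sv_{1p}$ and $Sv_{6pp}$ invariant and must send $Sv_{3pp}$ to the unlisted $Sv_{4pp}$, which is manifest in the stated products; and, as with Propositions \ref{SvalueE3a} and \ref{SvalueE3b}, each S-value must factor into $\mathbb{Z}$-linear forms whose vanishing loci are exactly the reducibility hyperplanes of $E_{3c}$, giving a strong verification that the division and factorization are correct.
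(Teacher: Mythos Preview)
Your proposal is correct and matches the paper's approach: the proposition is a purely computational statement, and the paper offers no separate proof beyond the general framework of Definition \ref{Svalues} and the computer-algebra verification alluded to in the Introduction, which is exactly what you describe. One small point of phrasing: the composite $P_{-\sigma}(\sigma e)\circ P_\sigma$ has order four, not at most two; it is only after right-division by $E_{3c}$ that the remainder has order at most two, and your Schur-type argument then forces that remainder to be the constant S-value.
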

Note that the factors of the numerators of the S-values above are $\Bbb{Z}$-linear forms in $$1,\quad 2e_1,\quad e_3,\quad e_4,\quad e_6.$$
Hence, we have the following theorem.

\begin{thm}\label{redcondE3c}
Let $\varphi(e_1,e_3,e_4,e_6)$ be one of the factors of the numerators of the S-values above. If $\varphi$ is an even integer, then $E_{3c}$ is reducible.
\end{thm}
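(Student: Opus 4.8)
The plan is to derive Theorem~\ref{redcondE3c} from the S-value computation of Proposition~\ref{SvalueE3c} together with the general principle, already used in the scheme of Theorem~\ref{redcondSE3}, that the vanishing of an S-value forces reducibility. Let me first recall the mechanism in the present setting. For a single shift $\sigma\in\{1p,3pp,4pp,6pp\}$ the operators $(P_\sigma,Q_\sigma)$ of Theorem~\ref{shiftopE3c} have order $2$, strictly below the order $3$ of $E_{3c}$, and they satisfy $E_{3c}(\sigma e)\circ P_\sigma=Q_\sigma\circ E_{3c}(e)$, while the round trip $P_{-\sigma}(\sigma e)\circ P_\sigma$ reduces, modulo $E_{3c}(e)$, to a scalar whose numerator is the polynomial $Sv_\sigma$ of Proposition~\ref{SvalueE3c}. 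Hence if a factor $\varphi$ of $Sv_\sigma$ vanishes, this scalar is $0$ and $P_{-\sigma}(\sigma e)\circ P_\sigma$ is right-divisible by $E_{3c}(e)$.

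First I would treat the base case $\varphi=0$, where $Sv_\sigma(e)=0$. I claim $E_{3c}(e)$ and $P_\sigma$ then share a nontrivial common right divisor. Were they right-coprime, $P_\sigma$ would be injective on the three-dimensional solution space of $E_{3c}(e)$, hence an isomorphism onto that of $E_{3c}(\sigma e)$; the identity $P_{-\sigma}(\sigma e)\circ P_\sigma\equiv0$ would then force $P_{-\sigma}(\sigma e)$, an operator of order $2$, to annihilate a three-dimensional space, which is impossible. Thus $G:=\gcd_r(E_{3c}(e),P_\sigma)$ has order $1$ or $2$, yielding a proper factorization $E_{3c}(e)=E_1\circ G$ and reducibility. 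This settles every reducibility condition on the locus $\varphi=0$.

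Next I would upgrade ``$\varphi=0$'' to ``$\varphi$ an even integer''. The point, visible in Proposition~\ref{SvalueE3c}, is that each factor $\varphi$ is a $\mathbb{Z}$-linear form in $1,2e_1,e_3,e_4,e_6$, and that for each $\varphi$ there is an available shift under which $\varphi$ changes by exactly $\pm2$: $e_1\to e_1\pm1$ moves every factor of $Sv_{1p}$ by $\pm2$ through its $2e_1$-term, and $e_3\to e_3\pm2$, $e_6\to e_6\pm2$ move the factors of $Sv_{3pp}$, $Sv_{6pp}$ accordingly. Writing $\varphi(e)=2m$ and iterating this shift $|m|$ times in the sign that drives $\varphi$ toward $0$, I reach a parameter $e'=e+k_0\sigma$ with $\varphi(e')=0$, where the base case already gives reducibility of $E_{3c}(e')$. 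It then remains to transport this reducibility back along the chain from $e'$ to $e$ through the shift relations, which is where the factorization-shifting results enter: each $[1,2]\leftrightarrow[2,1]$ transition is governed by Theorem~\ref{onetwo_to_twoone}, exactly as in the explicit analysis carried out for $S\!E_3$ and summarized for $E_{3a}$ in Proposition~\ref{factorE3a}.

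The main obstacle is precisely this last transport step. At an intermediate parameter where the S-value is nonzero the shift operator is an isomorphism of solution spaces and carries the common right factor across the shift without loss; but when an intermediate S-value also vanishes one must argue that reducibility is produced directly there rather than merely propagated, and one must check that the order of the transported factor is preserved, so that one genuinely lands in the $[1,2]$ or $[2,1]$ pattern and not in a degenerate configuration. Since $E_{3c}$ is not rigid, one cannot simply invoke invariance of the monodromy under integer shifts; the bookkeeping has to be carried out through the operator identities themselves, applying the $\gcd_r$ right-divisor description of the second paragraph at each link of the chain. Once this is organized the theorem follows, and the same bookkeeping simultaneously yields the precise factorization types, in the style of Proposition~\ref{factorE3a}.
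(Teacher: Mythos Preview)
Your proposal is correct and follows the paper's (entirely implicit) argument: reducibility at $\varphi=0$ via Proposition~\ref{Sred}, then transport along the shift chain back to the original parameter. One simplification worth noting: for the transport step you only need Proposition~\ref{FactorType}, which says that when the S-value is nonzero the two equations have the same factorization type, so reducibility propagates step by step without invoking Theorem~\ref{onetwo_to_twoone} or tracking factor orders---that finer machinery is only relevant for pinning down $[12]$ versus $[21]$, a separate statement the paper omits for $E_{3c}$.
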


\subsection{Equation $E_{3d}$}\label{E3E3d}
In this subsection we study a specialization $E_{3d}=E_{3d}(e_1,e_3,e_4,e_6)$ of $E_3$ 
with the condition $$e_2 = -e_3/2 - e_4/2 + 3/2 - e_1 - (3e_6)/2,\quad e_5 = e_1 + e_6.$$
  The accessory parameter takes the value
$$A_{00}=e_6(2e_1^2 + e_1e_3 + e_1e_4 + 3e_1e_6 + e_3e_4 + e_3e_6 + e_4e_6 + e_6^2 - 3e_1 - e_3 - e_4 - 3e_6 + 1)/2.$$

  \subsubsection{Shift operators of $E_{3d}$}\label{E3E3dShift}
\begin{thm}\label{shiftopE3c}The equation $E_{3d}$ admits a shift operator for every shift  $e_1\to e_1\pm1,\ e_i\to e_i\pm2\quad (i=3,4,6)$.
\end{thm}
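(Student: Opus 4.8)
The plan is to prove existence by explicit construction, following the scheme already used for $S\!E_3$, $Z_3$ and the companion specializations $E_{3a},\dots,E_{3c}$. Recall that a \emph{shift operator} for a shift $\sigma\colon e\mapsto e+\delta$ is a pair $(P_\sigma,Q_\sigma)$ of second-order operators satisfying the intertwining relation
$$E_{3d}(e+\delta)\circ P_\sigma = Q_\sigma\circ E_{3d}(e),$$
so that $P_\sigma$ maps solutions of $E_{3d}(e)$ to solutions of $E_{3d}(e+\delta)$. First I would observe that the prescribed shift amounts ($\pm1$ on $e_1$ and $\pm2$ on $e_3,e_4,e_6$) are exactly the minimal ones for which, after propagating the defining constraints $e_2=-e_3/2-e_4/2+3/2-e_1-3e_6/2$ and $e_5=e_1+e_6$, the full exponent vector $(e_1,\dots,e_7)$ is translated by integers: the half-integer coefficients of $e_3,e_4,e_6$ in $e_2$ force the even shifts there, while $e_1\to e_1\pm1$ keeps $e_2$ and $e_5$ integer-shifted. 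This integrality is a necessary condition, since a shift operator can only connect equations whose local exponents differ by integers.

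The construction itself reduces to linear algebra. For each of the eight elementary shifts I would posit an ansatz
$$P_\sigma=p_2\partial^2+p_1\partial+p_0,\qquad Q_\sigma=q_2\partial^2+q_1\partial+q_0,\qquad p_2=q_2,$$
with the $p_i,q_i$ rational in $x$ whose denominators are powers of $x$ and $x-1$ and whose numerators are polynomials of bounded degree, the permissible pole orders and degrees being those of the general pole-and-degree template recorded at the beginning of the section ($k=18$, $m=4$). Substituting into the intertwining relation and expanding both sides — each a fifth-order operator — with the Maple {\sl DEtools} package, I would collect the coefficient of each $\partial^j$, clear denominators, and equate coefficients of the resulting powers of $x$. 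This produces a finite, in general over-determined, \emph{linear} system for the unknown numerator coefficients of $P_\sigma$ and $Q_\sigma$; exhibiting a nonzero solution (unique up to scalar) establishes the theorem for that shift.

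The main obstacle — and the whole content of the statement — is the \emph{solvability} of this over-determined system. For a generic member of the three-parameter family $E_3(e)$ the system is inconsistent and no shift operator exists; what rescues $E_{3d}$ is that the codimension-2 specialization pins the accessory parameter to the value $A_{00}=e_6(2e_1^2+\cdots)/2$ recorded above, and it is precisely this value that makes the compatibility conditions of the linear system vanish identically in the free parameters $e_1,e_3,e_4,e_6$. I would therefore arrange the verification so that, after imposing the two defining relations, the rank deficiency of the coefficient matrix is displayed symbolically for each shift. A conceptual sanity check is available from Section 4: the defining subspace $y_1+y_5+y_3+y_4=y_1+y_5+y_6+y_8=0$ lies in the same $S_8$-orbit (the second type of $4$-dimensional subspace) as the one defining $Z_3$, which does admit shift operators by Theorem \ref{Z3shiftop}. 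This orbit coincidence strongly predicts the existence of shift operators for $E_{3d}$; however, since a general element of $S_8$ is a symmetry only of the polynomial $A_{00}$ and not of the Riemann scheme or of the operator $E_3$ itself, it does not transport the $Z_3$ operators to $E_{3d}$ directly, so the explicit construction above cannot be bypassed.
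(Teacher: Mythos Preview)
Your proposal is correct and follows essentially the same approach as the paper: existence is established by explicit construction of second-order operators $(P_\sigma,Q_\sigma)$ satisfying the intertwining relation, obtained by solving the linear system that results from an ansatz with rational coefficients. The paper simply records (in abbreviated form, with full expressions deferred to the data file \texttt{E3dPQ.txt}) the shapes of the resulting operators, while you spell out the method and add the useful observations about integrality of the induced shifts and the $S_8$-orbit heuristic; but the underlying argument is the same computer-algebra verification.
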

\par\noindent
We give only the coefficients of $\partial^2$, and the denominators of the other coefficients. For full expression, see $E3dPQ.txt$ in the list of $F\!D\!Edata$.
\par\noindent
{\bf Shift operators $P_{1p}, Q_{1p},P_{1n}, Q_{1n}$ for $e_1\to e_1\pm1$:}
$$\begin{array}{l}
  P_{1p},\ Q_{1p},\ P_{1n},\ Q_{1n}: x(x-1)^2\partial^2+(x-1)R_1\partial+R_1,
\end{array}$$
where $R_k$ is used symbolically for a polynomial of degree $k$ in $x$.
\par\noindent
{\bf Shift operators $P3p, Q3p,P3n, Q3n$ for $e_3\to e_3\pm2$:}
$$\begin{array}{l}
  P_{3pp},\ Q_{3pp}: x(x-1)^2\partial^2+(x-1)R_1\partial+R_1,\\[2mm]
  P_{3nn}:x^2\partial^2+\displaystyle\frac{xR_1}{x-1}\partial+\frac{R_1}{x-1},\\[2mm]
  Q_{3nn}:x^2\partial^2+\displaystyle\frac{xR_1}{x-1}\partial+\frac{R_2}{(x-1)^2},
\end{array}$$
where $R_k$ is used symbolically for a polynomial of degree $k$ in $x$. 
The shift operators for $e_4\to e_4\pm2$ are similar to those for $e_3\to e_3\pm2$.
\par\noindent
{\bf Shift operators $P_{6p}, Q_{6p},P_{6n}, Q_{6n}$ for $e_6\to e_6\pm2$:}
$$\begin{array}{l}
  P_{_{6p}p}:\displaystyle\frac{(x-1)^2R_2}{x}\partial^2+\frac{(x-1)R_3}{x^2}+\frac{R_3}{x^2},\\[2mm]
  Q_{_{6p}p}:\displaystyle\frac{(x-1)^2R_2}{x}\partial^2+\frac{(x-1)R_3}{x^2}+\frac{R_4}{x^3},\\[2mm]
  P_{6nn},\ Q_{6nn}:x^2(x-1)^2\partial^2+x(x-1)R_1\partial+R_2,
  \end{array}$$
where $R_k$ is used symbolically for a polynomial of degree $k$ in $x$.

\subsubsection{S-values and reducibility conditions of $E_{3d}$}\label{E3EdaS}
\begin{prp}\label{SvalueE3d}The numerators of the S-values for the shifts above:
  \[\def\arraystretch{1.1} \begin{array}{lcl}
Sv_{1p}&:&(2e_1+e_6)(2e_1+e_4+e_6)(2e_1-1+e_4+2e_6)(2e_1+e_3+e_6)(2e_1+2e_6+e_3-1)\\
&&\times(2e_1-1+2e_6+e_3+e_4),\\

Sv_{3pp}&:&(e_6+e_3)(2e_1+2e_6+e_3-1)(2e_1+e_3+e_6)(2e_1-1+2e_6+e_3+e_4),\\

Sv_{6pp}&:&e_6(e_4+e_6)(e_6+e_3)(2e_1+e_6)(2e_1+e_4+e_6)(2e_1+1+2e_6+e_4)\\
&&\times(2e_1-1+e_4+2e_6)(2e_1+e_3+e_6)(2e_1+1+e_3+2e_6)(2e_1+2e_6+e_3-1)\\
&&\times(2e_1+1+e_3+2e_6+e_4)(2e_1-1+2e_6+e_3+e_4),\\
\end{array}\]
\end{prp}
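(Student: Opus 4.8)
The S-value $Sv_\sigma$ is by definition the operator $P_{-\sigma}(\sigma e)\circ P_\sigma$ reduced modulo $E_{3d}(e)$, exactly as in Proposition~\ref{SvalueE3c}, and the content of the statement is that this reduction is a scalar whose numerator factors as displayed. The plan is first to explain why the reduction must be scalar, and then to compute and factor it. By the shift relations $E_{3d}(\sigma e)\circ P_\sigma=Q_\sigma\circ E_{3d}(e)$ and $E_{3d}(e)\circ P_{-\sigma}(\sigma e)=Q_{-\sigma}(\sigma e)\circ E_{3d}(\sigma e)$ established in the previous subsection, the composite $P_{-\sigma}(\sigma e)\circ P_\sigma$ carries every solution of $E_{3d}(e)$ to a solution of $E_{3d}(e)$; it therefore descends to an endomorphism of the three-dimensional local solution space. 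Since $E_{3d}(e)$ is irreducible for generic $e$ (the reducibility locus being a proper union of hyperplanes), this endomorphism commutes with the monodromy representation, hence is a scalar by Schur's lemma. Concretely, reducing the fourth-order operator $P_{-\sigma}(\sigma e)\circ P_\sigma$ modulo $E_{3d}(e)$---using $\partial^3=-(a_2\partial^2+a_1\partial+a_0)/a_3$ and its derivative to eliminate $\partial^3$ and $\partial^4$---yields an operator $c_2(x)\partial^2+c_1(x)\partial+c_0(x)$ in which necessarily $c_2\equiv c_1\equiv0$ (a nonzero second-order operator cannot be a left multiple of the third-order $E_{3d}$) and $c_0$ is a constant in $x$; this constant is $Sv_\sigma$.

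The first step of the actual verification is therefore the reduction itself. Using the explicit coefficients of $P_{1p},P_{1n},P_{3pp},P_{3nn},P_{6pp},P_{6nn}$ recorded in $E3dPQ.txt$, I would substitute the shifted parameters into $P_{-\sigma}$, form the composite $P_{-\sigma}(\sigma e)\circ P_\sigma$, and carry out the division by $E_{3d}(e)$ with the {\sl DEtools} package. Because several of the shift operators carry $x$-poles (for instance $P_{3nn}$ has a factor $1/(x-1)$ and $P_{6pp}$ a factor $1/x$), the resulting $c_0$ is a rational function of $e$; clearing its $e$-denominator produces the polynomial listed, which is the numerator reported in the statement. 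Confirming $c_2\equiv c_1\equiv0$ after reduction is the computational witness that the composite is genuinely scalar, and so also serves as an internal consistency check on the shift operators themselves.

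The remaining, conceptually sharper task is to confirm the \emph{factored} form rather than merely an expanded polynomial. Here I would avoid a blind expansion and instead locate the zero divisor of $Sv_\sigma$ directly. Each linear factor should mark a hyperplane on which $\sigma$ or $-\sigma$ fails to be invertible: at such a value the leading action of $P_\sigma$ or $P_{-\sigma}$ on a local solution $x^{0},x^{e_1},x^{e_2}$ at $x=0$ (respectively at $x=1,\infty$) degenerates, so the composite annihilates a solution and $Sv_\sigma$ vanishes. Computing these local leading coefficients at the three singular points yields precisely the $\mathbb{Z}$-linear forms in $1,2e_1,e_3,e_4,e_6$ appearing in the statement, and a degree count---the total degree of $Sv_\sigma$ being fixed by the orders of $P_\sigma$ and $P_{-\sigma}$---shows there are no further factors; the overall constant is then pinned down by evaluating the composite on the single distinguished solution $x^{e_1}(1+O(x))$ at $x=0$. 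The main obstacle is purely the size of the operators: $P_{6pp}$ already has head $(x-1)^2R_2/x$ with quartic lower coefficients, so the composite before reduction is a fourth-order operator with high-degree rational coefficients, and organizing the cancellations so that the $x$-dependence collapses to a constant is where the computation is heaviest. The local-analysis route above is what makes the factored answer checkable by hand rather than only machine-verifiable.
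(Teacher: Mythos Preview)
Your proposal is correct and matches the paper's approach, which is purely computational: the proposition is stated without proof, the shift operators being taken from the data file \texttt{E3dPQ.txt} and the composites reduced modulo $E_{3d}$ with Maple/\textsl{DEtools} as announced in the Introduction; the scalar nature of the S-value is already covered once and for all by Definition~\ref{Svalues} and the surrounding discussion in Section~7. Your Schur/monodromy explanation and the local-exponent degeneration argument for locating the linear factors are welcome additions that the paper does not spell out, but they are refinements of the same computation rather than a different route.
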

Note that the factors of the numerators of the S-values above are $\Bbb{Z}$-linear forms in $$1,\quad 2e_1,\quad e_3,\quad e_4,\quad e_6.$$
Hence, we have the following theorem.

\begin{thm}\label{redcondE3d}
Let $\varphi(e_1,e_3,e_4,e_6)$ be one of the factors of the numerators of the S-values above. If $\varphi$ is an even integer, then $E_{3d}$ is reducible.
\end{thm}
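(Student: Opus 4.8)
The plan is to establish this exactly as Theorems~\ref{redcondE3a}, \ref{redcondE3b} and \ref{redcondE3c} were obtained, by feeding the S-value computation of Proposition~\ref{SvalueE3d} into two structural facts: that a vanishing S-value forces reducibility, and that the shift operators identify the reducibility type of $E_{3d}$ at parameters differing by a lattice shift.

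I would begin from the observation already recorded after Proposition~\ref{SvalueE3d}: each factor $\varphi$ of each numerator is a $\mathbb{Z}$-linear form in $1, 2e_1, e_3, e_4, e_6$, and each such $\varphi$ carries a coefficient $\pm 1$ on at least one of $2e_1, e_3, e_4, e_6$. Because the admissible shifts are $e_1\to e_1\pm 1$ and $e_j\to e_j\pm 2$ ($j=3,4,6$), each elementary shift changes exactly one of $2e_1, e_3, e_4, e_6$ by $\pm 2$ and fixes the rest; it therefore changes $\varphi$ by an even integer, and choosing the shift in a variable on which $\varphi$ has a unit coefficient changes $\varphi$ by exactly $\pm 2$ (this is the source of the hypothesis that $\varphi$ be \emph{even} rather than merely integral). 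Thus, if $\varphi(e)=2m\in 2\mathbb{Z}$, repeatedly applying such a shift $\sigma$ produces a chain $e,\ \sigma e,\ \dots,\ \sigma^{|m|}e=\tau e$ along which $\varphi$ runs monotonically from $2m$ to $0$ in steps of size $2$; in particular $\varphi$ vanishes only at the final parameter $\tau e$.

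At $\tau e$ the factor $\varphi$ vanishes, so the corresponding S-value of Proposition~\ref{SvalueE3d} has vanishing numerator, and by the principle recalled at the start of Section~\ref{SE3Red} (``when an S-value vanishes the equation is reducible'', one of the tools collected in the last section) the equation $E_{3d}(\tau e)$ is reducible. To transfer this back to $E_{3d}(e)$ I would argue by contradiction: assume $E_{3d}(e)$ irreducible. Each shift operator induces a morphism between the cyclic modules $\mathcal{D}/\mathcal{D}E_{3d}(e)$ and $\mathcal{D}/\mathcal{D}E_{3d}(\sigma e)$, and the composition of the forward and backward operators reduces modulo the equation to the scalar $Sv_\sigma$; hence across any step whose S-value is nonzero the shift operator is an isomorphism of $\mathcal{D}$-modules, and the two equations are simultaneously (ir)reducible (they have the same lattice of factorizations). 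Along the chosen chain $\varphi$ is nonzero at every source parameter $\sigma^{k}e$ ($0\le k<|m|$), so for $e$ generic on the hyperplane $\{\varphi=2m\}$ the remaining factors of the relevant S-values are also nonzero at each step, every step is an isomorphism, and irreducibility would propagate all the way to $E_{3d}(\tau e)$, contradicting its reducibility. Hence $E_{3d}(e)$ is reducible for generic $e$ with $\varphi(e)=2m$.

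The main obstacle is the word \emph{generic} above: isomorphy of $\mathcal{D}$-modules transports irreducibility cleanly only across steps with non-vanishing S-value, so one cannot run the chain at parameters where an extraneous factor happens to vanish. The clean remedy, which is where I would spend the writing, is to note that for a fixed coset of the exponent lattice the reducibility of $E_{3d}$ is a Zariski-closed condition on the free parameters; the argument above yields reducibility on a dense open subset of each hyperplane $\{\varphi=2m\}$ (the complement of the finitely many hyperplanes where some intermediate S-value factor vanishes), and closedness then upgrades this to the whole hyperplane, hence to all of $\{\varphi\in 2\mathbb{Z}\}$. Everything else is the routine factorization of the numerators into the linear forms $\varphi$, already displayed in Proposition~\ref{SvalueE3d}.
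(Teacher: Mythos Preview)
Your approach is exactly the paper's (which reduces the theorem to the single word ``Hence'' after Proposition~\ref{SvalueE3d}, invoking Proposition~\ref{Sred} from the final section). The detour through genericity and Zariski closure is, however, unnecessary --- and the assertion that reducibility is Zariski-closed in the parameters would itself need justification, since monodromy depends transcendentally on the exponents. The obstacle you flag dissolves directly from Proposition~\ref{Sred}: at each step of the chain either $Sv_\sigma(\sigma^k e)\ne 0$ and the shift is an isomorphism, or $Sv_\sigma(\sigma^k e)=0$ and then $E_{3d}(\sigma^k e)$ is \emph{itself} reducible by that same proposition. Either way, reducibility of $E_{3d}(\sigma^{k+1}e)$ forces reducibility of $E_{3d}(\sigma^k e)$, so reducibility at $\tau e$ walks back to $e$ with no genericity hypothesis at all.
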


\section{Other two specializations: $E_{3e},E_{3f}$}\secttoc
We found two specializations admitting shift operators which are not in the $S_8$ orbits of
$$y_1+y_4=y_3+y_6=0\quad{\rm nor}\quad y_1+y_2+y_3+y_4=y_3+y_4+y_6+y_7=0.$$
The six specializations we treated in the previous sections, they have shift operators for four independent shifts. But the two equations in this section have less than four independent shifts. 
\subsection{Equation $E_{3e}: \{e_3=e_1-e_2,e_4=-e_2\}$}
In $y$-coordinates: $\{y_1=-y_4,y_6=-2y_4-2y_3-y_5-y_7-y_2\}$.\par\noindent
\begin{thm}The equation $E_{3e}$ admits a shift operator for every shift $e_1\to\pm1$, $e_j\to \pm2$ ($j=5,6$),\end{thm}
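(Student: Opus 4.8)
The plan is to construct the six operators explicitly---a raising and a lowering operator for each of the three directions $e_1$, $e_5$, $e_6$---and to verify the defining shift relations by direct computation, following the scheme already used for $E_{3a},\dots,E_{3d}$ (cf.\ Theorem~\ref{shiftopE3a}). First I would put $E_{3e}$ into $(x,\partial)$-form by substituting $e_3=e_1-e_2$, $e_4=-e_2$ into the coefficients $a_3,a_2,a_1,a_0$ of $H_3$ and fixing the accessory parameter $a_{00}=A_{00}(e)$ via \eqref{accpara00}. This yields a concrete operator $E_{3e}=x^2(x-1)^2\partial^3+\cdots$ whose coefficients are explicit polynomials in $x$ depending polynomially on $e_1,e_2,e_5,e_6$. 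Here each shift is understood to act within the $E_{3e}$ family, i.e.\ to preserve the two defining relations; thus $e_1\to e_1+1$ is accompanied by $e_3\to e_3+1$, while $e_5\to e_5\pm2$ and $e_6\to e_6\pm2$ leave $e_3,e_4$ fixed.

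For each such shift $\sigma$ I would posit a second-order ansatz
\[P_\sigma=p_2\partial^2+p_1\partial+p_0,\qquad Q_\sigma=q_2\partial^2+q_1\partial+q_0,\]
with $p_i,q_i$ rational in $x$ of the shapes suggested by the $E_{3a}$ tables (leading term a divisor of $x^2(x-1)^2$, poles only at $0,1$, an exponent-dependent normalizing scalar permitted), and with undetermined coefficients to be found as functions of the exponents. The shift relation $E_{3e}(\sigma e)\circ P_\sigma=Q_\sigma\circ E_{3e}(e)$ is then an identity of fifth-order operators; expanding both sides into $(x,\partial)$-form and matching the coefficient of each $\partial^k$ as a polynomial identity in $x$ yields a finite linear system. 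Its top term forces $p_2=q_2$, and the remaining equations are solved in practice with the DEtools package, producing the operators in closed form, from which the associated S-values follow as the compositions of matched raising and lowering operators.

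The step I expect to be the main obstacle is consistency of this linear system. For a generic member of the $E_3$ family it has only the trivial solution, so a nonzero $(P_\sigma,Q_\sigma)$ can exist only because the two relations $e_3=e_1-e_2$, $e_4=-e_2$ force the rank of the coefficient matrix to drop. Concretely, one must check that each shift carries the accessory value $A_{00}(e)$ to $A_{00}(\sigma e)$ \emph{simultaneously} with all the $\partial^k$-matching equations, and establishing this compatibility is where the computation becomes delicate. I expect it to succeed, as for $E_{3a}$, because $E_3$ arises as a middle convolution of $E_6$ (see Subsection~\ref{E3Def}): a shift of $E_6$ compatible with the fixed convolution parameter $t=s-1$ descends to a shift of the specialization, and the subspace $\{e_3=e_1-e_2,\ e_4=-e_2\}$---which, unlike those defining $E_{3a},\dots,E_{3d}$, is not in the $S_8$-orbit of the two symmetric subspaces---admits exactly three such surviving shifts, which explains the reduction from four to three independent directions. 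Once the six operators are exhibited, the shift relations hold by construction; the lowering operators may alternatively be recovered from the raising ones through the adjoint symmetry of Proposition~\ref{symmE3}.
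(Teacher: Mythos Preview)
Your proposal is correct and follows essentially the same approach as the paper: exhibit explicit second-order operators $(P_\sigma,Q_\sigma)$ and verify the shift relation $E_{3e}(\sigma e)\circ P_\sigma=Q_\sigma\circ E_{3e}(e)$ by direct computation. The paper simply writes down the raising operators for $e_1\to e_1+1$ and $e_5\to e_5+2$ together with their S-values, and obtains the $e_6$-direction from the $e_5$-direction via the obvious exchange $e_5\leftrightarrow e_6$; your description of the ansatz-and-linear-system procedure is precisely how such operators are found in practice, and your use of the adjoint symmetry (Proposition~\ref{symmE3}) to recover the lowering operators is a legitimate shortcut. The middle-convolution heuristic you offer for \emph{why} the system should be consistent is suggestive but not part of the proof proper---neither you nor the paper relies on it, since once the operators are in hand the verification is mechanical.
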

\noindent
{\bf Shift operator for $e_1\to e_1+1$}:
$$\begin{array}{ll}
P&=x^2(x-1)^2dx^2+x(x-1)((e_5+e_6+1)x-1/2+e_2-e_5/2-e_6/2)dx\\
&+e_5e_6x^2+(-1/2e_2+1/2e_5e_2+1/2e_6e_2-e_5e_6)x\\
&+e_1/2-e_1^2+e_1e_2-e_5e_1/2-e_6e_1/2,\\
Q&=x^2(x-1)^2dx^2+x(x-1)((e_5+e_6+3)x+e_2-e_5/2-e_6/2-3/2)dx\\
&+(e_5e_6+e_5+e_6+1)x^2+(1/2e_5e_2+1/2e_6e_2-e_5e_6+1/2e_2-e_5-e_6-1)x\\
&-e_1^2+e_1e_2-e_5e_1/2-e_6e_1/2-e_1/2
  \end{array}$$
S-value:
$$(2e_1+e_5+e_6-1)(2e_1-e_2+e_6)(2e_1-e_2+e_5)(2e_1-2e_2+e_5+e_6-1).$$
\noindent
{\bf Shift operator for $e_5\to e_5+2$}:
$$\begin{array}{ll}
P&=x^2(x-1)^2dx^2+x(x-1)((e_5+e_6+1)x-1/2+e_2-e_5/2-e_6/2)dx+e_5e_6x^2\\
&+(-1/2e_2+1/2e_5e_2+1/2e_6e_2-e_5e_6)x-(2e_1e_2e_5+2e_1e_2e_6-2e_1e_5^2\\
&-2e_1e_5e_6-2e_2^2e_5-2e_2^2e_6+3e_2e_5^2+4e_2e_5e_6+e_2e_6^2-e_5^3\\
&-2e_5^2e_6-e_5e_6^2-2e_1e_2+2e_1e_5+2e_2^2-4e_2e_5-2e_2e_6+2e_5^2+2e_5e_6+e_2-e_5)\\
&/(4(2e_1-e_2+2e_5+e_6-1)),\\
Q&=x^2(x-1)^2dx^2+x(x-1)((e_5+e_6+5)x+e_2-e_5/2-e_6/2-5/2)dx+(e_5e_6\\
&+e_5+3e_6+3)x^2+(-3+1/2e_5e_2-e_5e_6+1/2e_6e_2-3e_6-e_5+3/2e_2)x\\
&-(2e_1e_2e_5+2e_1e_2e_6-2e_1e_5^2-2e_1e_5e_6-2e_2^2e_5-2e_2^2e_6\\
&+3e_2e_5^2+4e_2e_5e_6+e_2e_6^2-e_5^3-2e_5^2e_6-e_5e_6^2+6e_1e_2-2e_1e_5\\
&-4e_1e_6-2e_2^2+6e_2e_5+4e_2e_6-2e_5^2-4e_5e_6-2e_6^2-4e_1-e_2-3e_5+2)\\
&/(4(2e_1-e_2+2e_5+e_6-1)),
\end{array}$$
S-value:
$$(e_2 - e_5)(e_2 + e_5)(e_5 + e_6 - 1)(2e_1 + e_5 + e_6 - 1)(2e_1 - e_2 + e_5)(2e_1 - 2e_2 + e_5 + e_6 - 1).$$
The shift operators for $e_6\to e_6+2$ is obtained from above by the change $e_5\leftrightarrow e_6$.

\subsection{Equation $E_{3f}:\{e_2=-e_1-e_3-e_5+1,e_4=e_3-e_5+1\}$}
In $y$-coordinates: $\{y_3=-y_5,y_6=-y_2-y_1+2y_5-2y_4-y_7\}$. \par\noindent
\begin{thm}The equation $E_{3f}$ admits a shift operator for every shift $(e_1,e_6)\to(e_1\pm1,e_6\pm1)$.\end{thm}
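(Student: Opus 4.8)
The plan is to prove the theorem in the same computational spirit as Theorems~\ref{shiftopE3a}--\ref{shiftopE3d}, namely by exhibiting for each shift a pair of order-two operators $(P,Q)$ satisfying the intertwining relation
$$E_{3f}(\sigma e)\circ P=Q\circ E_{3f}(e),$$
where $\sigma$ is the shift and $E_{3f}(\sigma e)$ denotes $E_{3f}$ with shifted exponents. First I would record that each shift is compatible with the subfamily: under the defining conditions $e_2=-e_1-e_3-e_5+1$ and $e_4=e_3-e_5+1$, the move $e_1\to e_1+1,\ e_6\to e_6+1$ forces $e_2\to e_2-1$ and, via the Fuchs relation $e_1+\cdots+e_7=3$, $e_7\to e_7-1$, while $e_3,e_4,e_5$ are fixed; hence $\sigma e$ again lies in $E_{3f}$ and the left-hand side is genuinely an equation of the same type. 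The remaining three sign choices pass the same check, so all four shifts preserve the family $E_{3f}(e_1,e_3,e_5,e_6)$.

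For the construction I would adopt the ansatz suggested by the neighbouring equation $E_{3e}$,
$$P=x^2(x-1)^2\partial^2+x(x-1)(\alpha x+\beta)\partial+(\gamma x^2+\delta x+\zeta),$$
with a like form for $Q$, the coefficients $\alpha,\dots,\zeta$ being unknown polynomials in $e_1,e_3,e_5,e_6$ (allowing denominators $x^m(x-1)^m$ should the division below demand them). The leading factor $x^2(x-1)^2$ and the degree bounds are dictated by requiring $P$ to be regular off $\{0,1,\infty\}$ and to carry the local exponents of $E_{3f}(e)$ to those of $E_{3f}(\sigma e)$ at each singular point; this indicial bookkeeping fixes the valuations of the coefficients at $0$, $1$ and $\infty$ and thereby pins down the admissible ansatz. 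Substituting $P$ into the shift relation and performing a right division of $E_{3f}(\sigma e)\circ P$ by the divisor $E_{3f}(e)$ in the ring of differential operators, the quotient is the sought $Q$ and the remainder is an operator of order $\le2$ whose coefficients are rational in $x$. Clearing denominators and expanding the numerators in powers of $x$ converts the vanishing of this remainder into a finite linear system in $\alpha,\dots,\zeta$ whose coefficients are polynomials in the exponents; solving it yields the operators, which I would then display and verify directly, exactly as was done in Sections~2--5.

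The step I expect to be decisive is the solvability of this linear system. It is overdetermined, and for a generic member of $E_3$ it is inconsistent---this is precisely the assertion of Section~1 that generic $E_3$ admits no shift operator---so the content of the theorem is that the two codimension-2 relations defining $E_{3f}$ (equivalently the subspace $\{y_3=-y_5,\ y_6=-y_2-y_1+2y_5-2y_4-y_7\}$ in $y$-coordinates), together with the special accessory value $A_{00}$ carried by $E_3(e)$, drop the rank of the system enough to admit a nonzero solution. Establishing that these relations are exactly what make the rank drop is the heart of the matter, and in practice it is confirmed by the symbolic computation underlying the data files. Finally, to avoid repeating the computation for every sign, I would invoke the adjoint symmetry of Proposition~\ref{symmE3}: it sends $E_{3f}$ to $E_{3f}$ (the defining relations are preserved under $e_i\mapsto-e_i$ for $i\le4$ and $e_5,e_6\mapsto 2-e_5,2-e_6$) and reverses the sign of each shift component, so it converts the operators for $(e_1+1,e_6+1)$ into those for $(e_1-1,e_6-1)$, and those for $(e_1+1,e_6-1)$ into those for $(e_1-1,e_6+1)$, leaving only two independent cases to treat.
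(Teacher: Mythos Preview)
Your approach is essentially the paper's: it simply exhibits, for two of the four sign choices, explicit second-order operators $(P,Q)$ satisfying $E_{3f}(\sigma e)\circ P=Q\circ E_{3f}(e)$ and records the corresponding S-values. Two small points are worth flagging. First, the ansatz you borrow from $E_{3e}$ is not the right one here: the paper's operators for $E_{3f}$ have leading coefficient $x(x-1)^2$, not $x^2(x-1)^2$, and the constant term is linear in $x$, not quadratic. Since multiplying a shift operator by $x$ does not produce another shift operator, your linear system with the $x^2(x-1)^2$ head would come out inconsistent; your own remark that the indicial bookkeeping ``pins down the admissible ansatz'' is exactly what saves you, and carrying it out at $x=0$ (where the shift moves $e_1\to e_1+1,\ e_2\to e_2-1$) and at $x=1$ (where nothing moves) would have led you to the correct $x(x-1)^2$ head before solving. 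Second, your reduction to two cases via the adjoint symmetry is valid, but the paper takes the shorter route implicit in Section~7: once $P_+$ exists for a shift $sh_+$, the inverse shift operator $P_-$ exists automatically, so displaying operators for $(e_1+1,e_6+1)$ and $(e_1-1,e_6+1)$ already covers all four sign combinations.
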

\noindent
{\bf Shift operator for $(e_1,e_6)\to(e_1+1,e_6+1)$}:
$$\begin{array}{ll}
P&=x(x-1)^2dx^2+(x-1)((e_5+e_6+1)x+e_1-1)dx+e_5e_6x-e_1/2-e_6/2+e_1^2/2\\
&+e_1e_3/2+e_5e_1/2+e_6e_1+e_6e_3/2-e_5e_6/2+e_6^2/2:\\
Q&=x(x-1)^2dx^2+(x-1)((e_5+e_6+2)x+e_1)dx+(e_5e_6+e_5)x+e_1^2/2+e_1e_3/2\\
&+e_5e_1/2+e_6e_1+e_6e_3/2-e_5e_6/2+e_6^2/2+e_1/2+e_6/2
  \end{array}$$
S-value:
$$(e_1+e_6)(e_1+2e_3+e_6)(e_1+e_3+e_5+e_6-1)(e_6+1-e_5+e_3+e_1).$$
{\bf Shift operator for $(e_1,e_6)\to(e_1-1,e_6+1)$}:
$$\begin{array}{ll}
P&=x(x-1)^2dx^2+(x-1)((e_5+e_6+1)x-e_1-e_3-e_5)dx+e_5e_6x\\
&-e_1/2+e_6/2+e_1^2/2+e_1e_3/2+e_5e_1/2-e_6e_1-e_6e_3/2-(3e_5e_6)/2+e_6^2/2,\\
Q&=x(x-1)^2dx^2+(x-1)((e_5+e_6+2)x-e_1-e_3-e_5+1)dx+(e_5e_6+e_5)x\\
&+e_1^2/2+e_1e_3/2+e_5e_1/2-e_6e_1-e_6e_3/2-(3e_5e_6)/2+e_6^2/2-(3e_1)/2\\
&-e_3-e_5+(3e_6)/2+1
  \end{array}$$
S-value:
$$(e_1-e_6)(e_1+2e_5-e_6-2)(e_1+e_3+e_5-e_6-1)(-e_6-1+e_5-e_3+e_1).$$

\section{Some generalities}\secttoc
In this section we extract some definitions and theorems from \cite{HOSY1} needed in this paper. Let $D:=\mathbb{C}(x)[\partial].$
\subsection{Shift operators and shift relations}
Let $E(e)$ be a Fuchsian differential equation of order 3, and $e=(e_1,e_2\dots)$ a system of local exponents. Assume $E(e)$ is irreducible for generic $e$.

\begin{dfn}\label{DefShift} Let ${\rm Sol}(E(e))$ be the solution space of $E(e)$. For a shift $sh_+:e\to e_+$:
  $$sh_+:(e_+)_i=e_i+n_i,\quad n_i\in\mathbb{Z},$$
  a non-zero operator $P\in D$ of order lower than $3$
  sending ${\rm Sol}(E(e))$ to ${\rm Sol}(E(e_+))$ is called
  a {\it shift operator} for the shift $sh_+$ 
  and is denoted by $P_{+}$. A shift operator for the shift 
 $ sh_-:(e_-)_i=e_i-n_i$
  is denoted by $P_{-}$.
\end{dfn}
Suppose a shift operator $P_{+}\in D$ for a shift $sh_+$ exists.
Since $E(e_+)\circ P_{+}$ is divisible from right by $E(e)$,
there is an operator $Q_{+}\in D$ satisfying the {\it shift relation}:
  $$(EPQE):\quad E(e_+)\circ P_{+}=Q_{+}\circ E(e).$$
Conversely, if there is a pair of non-zero operators $(P_{+},Q_{+})\in D^2$
of order smaller than $n$ satisfying this relation,
then $P_{+}$ is a shift operator for the shift $sh_+$.
We often call also the pair $(P_{+},Q_{+})$ the shift operator for $sh_+$.
\begin{prp}Notation as above, if $P_+$ exists, then $P_-$ exists, and vice versa. If $P_+$ exists, then it is unique up to multiplicative constant (independent of $(x,\partial)$). For every shift operator, we can assume that the  coefficients are polynomials of $e$ free of common factors.
\end{prp}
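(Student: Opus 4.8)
The plan is to pass from operators to the cyclic left $D$-modules $M(e):=D/D\,E(e)$ and $M(e_+):=D/D\,E(e_+)$, and to read all three assertions off the structure of $\mathrm{Hom}_D(M(e_+),M(e))$. First I would observe that a shift relation $E(e_+)\circ P_+=Q_+\circ E(e)$ is exactly the condition that right multiplication by $P_+$ descends to a left $D$-module homomorphism $\rho_+\colon M(e_+)\to M(e)$, $[u]\mapsto[uP_+]$: well-definedness requires $D\,E(e_+)\,P_+\subseteq D\,E(e)$, i.e.\ $E(e_+)P_+\in D\,E(e)$, which is the relation. Since $\mathrm{ord}\,P_+<3=\mathrm{ord}\,E(e)$ and $P_+\neq0$, the class $[P_+]$ is nonzero in $M(e)$, so $\rho_+\neq0$. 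For generic $e$ the equation $E(e)$ is irreducible, hence $M(e)$ and $M(e_+)$ are simple $D$-modules; by Schur's lemma a nonzero map between simple modules is an isomorphism, so $\rho_+$ is an isomorphism.

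For the first assertion I would invert $\rho_+$. Its inverse $\rho_+^{-1}\colon M(e)\to M(e_+)$ is a nonzero homomorphism, and every element of $\mathrm{Hom}_D(M(e),M(e_+))$ is given by right multiplication by the image $R$ of the generator, subject to $E(e)R\in D\,E(e_+)$; reducing $R$ modulo $E(e_+)$ by right division we may take $\mathrm{ord}\,R<3$. Setting $P_-:=R$ and reading off $E(e)P_-=Q_-\,E(e_+)$ gives a nonzero operator of order $<3$ sending $\mathrm{Sol}(E(e_+))$ into $\mathrm{Sol}(E(e))$; after relabelling the base point (legitimate because $e$ is generic iff $e_+$ is) this is precisely a shift operator $P_-$ for $sh_-$. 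Running the same argument backwards yields the converse.

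For uniqueness I would use that the eigenring $\mathrm{End}_D(M(e))$ of an irreducible operator is a finite-dimensional division algebra over the algebraically closed constant field $\mathbb{C}$, hence equals $\mathbb{C}$ (equivalently, the commutant of the irreducible monodromy representation is scalar). If $P_+$ and $P_+'$ are two shift operators for $sh_+$, then $\rho_+^{-1}\circ\rho_+'\in\mathrm{End}_D(M(e_+))=\mathbb{C}$, so $\rho_+'=\lambda\,\rho_+$ with $\lambda\in\mathbb{C}^{\times}$ (a constant in $x,\partial$, possibly depending on $e$); thus $P_+'\equiv\lambda P_+\pmod{D\,E(e_+)}$, and since both have order $<3$ the congruence is an equality $P_+'=\lambda P_+$.

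Finally, for the normalization I would regard the shift relation as a homogeneous linear system in the unknown $x$-coefficients of $P_+$ and $Q_+$ over the field $\mathbb{C}(e)(x)$; by uniqueness its solution space is one-dimensional over the constants $\mathbb{C}(e)$. Choosing a nonzero solution, clearing the common denominator in $\mathbb{C}[e]$ and then dividing by the $\gcd$ in $\mathbb{C}[e]$ of all the coefficient polynomials produces a shift operator whose coefficients are polynomials in $e$ free of common factor, determined up to a nonzero constant by Gauss's lemma. The main obstacle I anticipate is the triviality $\mathrm{End}_D(M(e))=\mathbb{C}$: it rests on genuine irreducibility of $E(e)$ for generic $e$ together with algebraic closedness of the constants, and one must handle "generic $e$" uniformly so that the conclusions, being polynomial identities in $e$, persist under specialization to the non-generic loci actually studied in the paper.
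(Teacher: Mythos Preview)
Your argument is correct and is the standard $D$-module proof: translate shift operators into $D$-linear maps between the cyclic modules $D/D\,E(e)$ and $D/D\,E(e_+)$, use simplicity (from generic irreducibility) plus Schur to obtain an inverse, and use that the eigenring of an irreducible operator over $\mathbb{C}(x)$ reduces to $\mathbb{C}$ for uniqueness. The normalization step is also fine.

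Note, however, that the paper does \emph{not} supply a proof of this proposition at all: Section~7 states explicitly that these generalities are ``extracted from \cite{HOSY1},'' and the proposition is listed without proof. So there is no ``paper's own proof'' to compare against here. Your approach is the natural one and is almost certainly what \cite{HOSY1} does as well; the one point worth making sharper is the justification that $\mathrm{End}_D(M(e))=\mathbb{C}$: the cleanest route is to observe that any $D$-endomorphism acts $\mathbb{C}$-linearly on the finite-dimensional local solution space and hence satisfies a polynomial over $\mathbb{C}$, so the division algebra $\mathrm{End}_D(M(e))$ is algebraic over the algebraically closed field $\mathbb{C}$ and therefore equals $\mathbb{C}$. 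You gesture at this via monodromy, which also works since the equations are Fuchsian.
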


\begin{remark}
   When a differential {\it equation} in question is $Eu=0$, by multiplying a non-zero polynomial to the {\it operator} $E$, we can assume that $E$ has no poles. However, shift operators may have poles as functions of $x$.
  \end{remark}
\begin{prp}\label{expofQ} If an operator $E(e)$ with the adjoint symmetry
  $E(e)^*=\\E (adj(e))$    
  admits a shift relation $E(\sigma(e))\circ P=Q\circ E(e)$, then 
  $$Q=(-)^\nu P(adj\circ \sigma(e))^*,\quad \nu={\rm order}(P).$$
\end{prp}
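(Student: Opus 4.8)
The plan is to apply the formal adjoint $*$ to the shift relation and recognize the adjoints of $P$ and $Q$ as themselves a shift‑operator pair, then pin down the resulting constant by uniqueness. Recall first that $*$ is an anti‑automorphism of $D=\mathbb{C}(x)[\partial]$, that is $(A\circ B)^*=B^*\circ A^*$ and $(A^*)^*=A$. Applying $*$ to $(EPQE):\ E(\sigma(e))\circ P=Q\circ E(e)$ reverses the two compositions and yields $P^*\circ E(\sigma(e))^*=E(e)^*\circ Q^*$. Now invoke the hypothesis $E(f)^*=E(adj(f))$ at $f=\sigma(e)$ and at $f=e$; this turns the identity into $E(adj(e))\circ Q^*=P^*\circ E(adj\circ\sigma(e))$. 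Read in the standard form $E(\mathrm{target})\circ(\cdot)=(\cdot)\circ E(\mathrm{source})$, this exhibits $(Q^*,P^*)$ as a shift‑operator pair for the shift carrying the exponents $adj\circ\sigma(e)$ to $adj(e)$, with $Q^*$ playing the role of the forward operator.

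The second step is to identify that shift with $\sigma$ itself, based at $adj\circ\sigma(e)$. By Proposition~\ref{symmE3} the map $adj$ is an affine involution whose linear part is $-\mathrm{id}$ on the exponents, so it conjugates any lattice shift to its inverse: $adj\circ\sigma=\sigma^{-1}\circ adj$, and hence $\sigma\circ adj\circ\sigma=adj$. In words, applying the shift $\sigma$ to the base point $adj\circ\sigma(e)$ lands exactly at $adj(e)$. Therefore the forward operator $P$ for the shift $\sigma$, evaluated at the base point $adj\circ\sigma(e)$—namely $P(adj\circ\sigma(e))$—satisfies the very same shift relation, in the very same slot, as $Q^*$. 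By the preceding uniqueness statement (a shift operator is unique up to a multiplicative constant independent of $(x,\partial)$), we conclude $Q^*=c\,P(adj\circ\sigma(e))$ for a scalar $c$, and applying $*$ once more gives $Q=c\,\bigl(P(adj\circ\sigma(e))\bigr)^*$.

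It remains to show $c=(-1)^\nu$, and this is the step I expect to be the real obstacle. Comparing the coefficients of $\partial^{3+\nu}$ in $(EPQE)$ forces $\mathrm{ord}(P)=\mathrm{ord}(Q)=\nu$ and, since $E(\sigma(e))$ and $E(e)$ share the leading coefficient $a_3=x^2(x-1)^2$ (independent of the exponents), forces the leading coefficients of $P$ and $Q$ to coincide, $p_\nu=q_\nu$. Taking the formal adjoint of an order‑$\nu$ operator multiplies its top coefficient by $(-1)^\nu$, so $\mathrm{lead}(Q^*)=(-1)^\nu p_\nu(e)$, whereas $\mathrm{lead}\bigl(P(adj\circ\sigma(e))\bigr)=p_\nu(adj\circ\sigma(e))$. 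Matching these through $Q^*=c\,P(adj\circ\sigma(e))$ shows $c=(-1)^\nu$ precisely when the leading coefficient $p_\nu$ is invariant under the substitution $e\mapsto adj\circ\sigma(e)$. Establishing this invariance is the delicate point: for shifts whose operators have constant head (such as $P_{a-},P_{c-},P_{a+}$) it is immediate, but for an operator like $P_{g-}$, whose head $cp_2=(2c+g)(g+a+b)x^2-(g+2b)(2c+g)x+(g+2b)(g+a+c)$ genuinely depends on the exponents, one must verify that the affine change $e\mapsto adj\circ\sigma(e)$ fixes this polynomial in $x$. Once that invariance is checked the constant is $(-1)^\nu$, and the asserted formula $Q=(-)^\nu P(adj\circ\sigma(e))^*$ follows.
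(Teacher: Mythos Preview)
The paper does not supply a proof of this proposition; Section~7 explicitly says these statements are ``extracted from \cite{HOSY1}''. So there is no argument in the present paper to compare your attempt against.

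Your approach---apply the anti-automorphism $*$ to the shift relation, use the adjoint symmetry $E(f)^*=E(adj(f))$, and invoke uniqueness of shift operators---is the natural one and correctly yields $Q^*=c\,P(adj\circ\sigma(e))$ for some scalar $c$. Your observation that $adj$ has linear part $-\mathrm{id}$, hence $adj\circ\sigma=\sigma^{-1}\circ adj$ and $adj\circ\sigma$ is an involution, is exactly what is needed to match the two shift relations.

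The gap you yourself flag is genuine, and it is worth being precise about why. The identity $Q=(-1)^\nu P(adj\circ\sigma(e))^*$ compares $Q$ at parameter value $e$ with $P$ at the \emph{different} parameter value $adj\circ\sigma(e)$; it is therefore not invariant under rescaling $P\mapsto f(e)P$, $Q\mapsto f(e)Q$ unless $f(e)=f(adj\circ\sigma(e))$. In other words, the exact constant $(-1)^\nu$ only makes sense relative to a normalization of $P$ that is invariant under the involution $adj\circ\sigma$. Your leading-coefficient comparison shows $c=(-1)^\nu\,p_\nu(e)/p_\nu(adj\circ\sigma(e))$, so the statement with constant $(-1)^\nu$ is equivalent to that invariance of $p_\nu$. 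For the normalizations actually used in this paper (e.g.\ those listed in \S\ref{SE3}.3), one can check this invariance directly; your example $P_{g-}$ with head $cp_2$ does satisfy it, since under $(a,b,c,g)\mapsto(-a{-}1,-b{-}1,-c{-}1,-g{+}2)$ each linear factor $2c{+}g$, $a{+}b{+}g$, $2b{+}g$, $a{+}c{+}g$ simply changes sign, and they occur in pairs. But this is a verification, not a proof in general, and you are right to mark it as the unfinished step.
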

\subsection{S-values and reducibility conditions}\label{GenShiftS}
Let a shift $sh_+:e\to e_+$ and its inverse $sh_-:e\to e_-$ admit shift operators
$$P_+(e):{\rm Sol}(E(e))\to {\rm Sol}(E(e_+))$$ and
$$P_-(e):{\rm Sol}(E(e))\to {\rm Sol}(E(e_-)).$$
 Consider  compositions of shift operators:
$$P_+(e_-)\circ P_-(e):{\rm Sol}(E(e)\to {\rm Sol}(E(e_-)\to {\rm Sol}(E(e)),$$
and
$$P_-(e_+)\circ P_+(e):{\rm Sol}(E(e)\to {\rm Sol}(E(e_+)\to {\rm Sol}(E(e));$$
these are constants (times the identity).
\begin{dfn}\label{Svalues}These constants will be called the {\it S-values} for $sh_\mp$, and are denoted as 
$$Sv_{sh_-}=P_+(e_-)\circ P_-(e) {\rm\quad mod\quad} E(e)$$
and
$$Sv_{sh+}=P_-(e_+)\circ P+(e) {\rm\quad mod\quad} E(e).$$
\end{dfn}
\begin{prp}\label{twoSvalues} The two S-values are related as
  $$Sv_{sh_-}(e)=Sv_{sh_+}(e_-).$$
\end{prp}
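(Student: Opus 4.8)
Proposition \ref{twoSvalues} asserts that the two S-values are related by
$$Sv_{sh_-}(e)=Sv_{sh_+}(e_-).$$
Let me sketch my plan.

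\textbf{The approach.} The plan is to unwind both S-values directly from Definition \ref{Svalues} and compare them as operators modulo $E(e)$. By definition, $Sv_{sh_-}(e)=P_+(e_-)\circ P_-(e)\bmod E(e)$, a scalar, and $Sv_{sh_+}(e)=P_-(e_+)\circ P_+(e)\bmod E(e)$, so that $Sv_{sh_+}(e_-)=P_-(e)\circ P_+(e_-)\bmod E(e_-)$ after substituting $e\mapsto e_-$ (note that $(e_-)_+=e$). Thus the two quantities I must compare are $P_+(e_-)\circ P_-(e)$ acting on $\mathrm{Sol}(E(e))$ and $P_-(e)\circ P_+(e_-)$ acting on $\mathrm{Sol}(E(e_-))$. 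These are compositions of the \emph{same} pair of operators taken in the two opposite orders, around the round-trip $\mathrm{Sol}(E(e))\to\mathrm{Sol}(E(e_-))\to\mathrm{Sol}(E(e))$ versus $\mathrm{Sol}(E(e_-))\to\mathrm{Sol}(E(e))\to\mathrm{Sol}(E(e_-))$.

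\textbf{Key steps.} First I would set $P:=P_-(e)$ and $P':=P_+(e_-)$, so that $P:\mathrm{Sol}(E(e))\to\mathrm{Sol}(E(e_-))$ and $P':\mathrm{Sol}(E(e_-))\to\mathrm{Sol}(E(e))$. Each of $P'\circ P$ and $P\circ P'$ acts as a scalar on the respective solution space, by the displayed remark in Definition \ref{Svalues} that such round-trip compositions are constants times the identity; call these scalars $c=Sv_{sh_-}(e)$ and $c'=Sv_{sh_+}(e_-)$. Second, I would argue that $c=c'$ by the standard eigenvalue-of-conjugate trick: since $P'\circ P=c\cdot\mathrm{id}$ on $\mathrm{Sol}(E(e))$ and $P\circ P'=c'\cdot\mathrm{id}$ on $\mathrm{Sol}(E(e_-))$, applying $P$ to a solution $u\in\mathrm{Sol}(E(e))$ gives $(P\circ P')(Pu)=c'\,Pu$, while also $P(P'\circ P)u=P(cu)=c\,Pu$; hence $(c-c')\,Pu=0$ for all $u$. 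Third, since $E(e)$ is irreducible for generic $e$ and $P=P_-(e)$ is a nonzero shift operator, $P$ does not annihilate $\mathrm{Sol}(E(e))$ (its kernel would force a reducibility/factorization of $E(e)$ of a lower order), so $Pu\neq0$ for some $u$, forcing $c=c'$ generically, and then for all $e$ by polynomiality of the S-values in $e$.

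\textbf{Main obstacle.} The genuinely delicate point is justifying $Pu\not\equiv0$, i.e.\ that the shift operator $P_-(e)$ is not identically zero on the solution space for generic $e$. This is where irreducibility of $E(e)$ enters: if $P_-$ killed every solution of $E(e)$, then $E(e)$ would be right-divisible by $P_-$ (an operator of order $<3$), contradicting generic irreducibility. I would invoke the uniqueness and nonvanishing of shift operators guaranteed by the propositions preceding Definition \ref{Svalues} (the statement that if $P_+$ exists it is unique up to constant, and the definition that shift operators are nonzero). The only subtlety to handle carefully is the passage from "equal for generic $e$" to "equal as polynomials," which is immediate since both S-values are polynomial expressions in $e$ and agree on a Zariski-dense set.
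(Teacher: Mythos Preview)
The paper does not prove this proposition here; Section~7 merely extracts statements from \cite{HOSY1} without proof, so there is no ``paper's own proof'' to compare against in this document. Your argument is correct: associativity gives $(P\circ P')\circ P = P\circ(P'\circ P)$, and evaluating on $u\in\mathrm{Sol}(E(e))$ yields $c'\,Pu=c\,Pu$, whence $c=c'$ once you know $P$ does not kill every solution.

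One small simplification: you invoke generic irreducibility of $E(e)$ to ensure $Pu\not\equiv0$, but this is more than you need. Since $P=P_-(e)$ is a nonzero operator of order strictly less than $3$, its solution space (locally at an ordinary point) has dimension at most $\mathrm{ord}(P)<3$, while $\mathrm{Sol}(E(e))$ has dimension $3$; hence $P$ cannot annihilate all of $\mathrm{Sol}(E(e))$, regardless of whether $E(e)$ is irreducible. This removes the need for the genericity-then-polynomiality step at the end, and the identity $Sv_{sh_-}(e)=Sv_{sh_+}(e_-)$ holds for every $e$ directly.
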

\begin{prp}\label{Sred}   
If for some $e=\epsilon$, $Sv_{sh_+}(\epsilon)=0$ $($resp. $Sv_{sh_-}(\epsilon)=0)$, then $E(\epsilon)$ and $E(\epsilon_+)$ $($resp. $E(\epsilon_-))$ are reducible. If  $Sv_{sh_+}(\epsilon)\not=0$ $($resp. $Sv_{sh_-}(\epsilon)\not=0)$, then $P_{sh_+}$ $($resp. $P_{sh_-})$ gives an isomorphism: ${\rm Sol}(E(\epsilon))\to {\rm Sol}(E(\epsilon_+))$ $($resp. ${\rm Sol}(E(\epsilon))\to {\rm Sol}(E(\epsilon_-)))$.
\end{prp}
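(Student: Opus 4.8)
The plan is to reduce both halves to a single structural fact about irreducible third-order operators and then feed in the two scalar identities that define the S-values, so I would open with that fact as a lemma. Let $E$ be an irreducible Fuchsian operator of order $3$ and let $P\in D$ have order at most $2$. Then $\{y\in \mathrm{Sol}(E): Py=0\}$ is monodromy-invariant: if $Py=0$ then for any loop $\gamma$ the continuation $\gamma_*y$ again solves $E$ (its coefficients lie in $\mathbb{C}(x)$) and $P(\gamma_*y)=\gamma_*(Py)=0$ (likewise for $P$). Irreducibility forces this subspace to be $0$ or all of $\mathrm{Sol}(E)$, and in the latter case $E$ would right-divide $P$, impossible by the orders. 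Hence a nonzero $P$ of order $<3$ restricts to an injection of $\mathrm{Sol}(E)$ into the solutions of any order-$3$ equation $E'$ it maps to, i.e. an isomorphism; being a differential operator over $\mathbb{C}(x)$ it intertwines monodromy, so $E$ and $E'$ are then isomorphic as monodromy representations.

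For the isomorphism statement, suppose $Sv_{sh_+}(\epsilon)\neq 0$. The shift relation $E(e_+)\circ P_+=Q_+\circ E(e)$, being an identity in $e$, specializes at $\epsilon$ to show $P_+(\epsilon)$ maps $\mathrm{Sol}(E(\epsilon))\to\mathrm{Sol}(E(\epsilon_+))$, and the companion relation does the same for $P_-(\epsilon_+)$ in the opposite direction. By Definition~\ref{Svalues} together with Proposition~\ref{twoSvalues} the two composites are scalars,
$$P_-(\epsilon_+)\circ P_+(\epsilon)=Sv_{sh_+}(\epsilon)\,\mathrm{id}\ \text{ on }\mathrm{Sol}(E(\epsilon)),\qquad P_+(\epsilon)\circ P_-(\epsilon_+)=Sv_{sh_+}(\epsilon)\,\mathrm{id}\ \text{ on }\mathrm{Sol}(E(\epsilon_+)).$$
Since $Sv_{sh_+}(\epsilon)\neq 0$, the operator $Sv_{sh_+}(\epsilon)^{-1}P_-(\epsilon_+)$ is a two-sided inverse of $P_+(\epsilon)$, which is therefore the asserted isomorphism; the statement for $P_{sh_-}$ is the same with $+$ and $-$ exchanged.

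For the reducibility statement I would argue by contraposition. Assume $E(\epsilon)$ is irreducible and, for the moment, that $P_+(\epsilon)\neq 0$ as an operator. By the lemma $P_+(\epsilon)$ is then an isomorphism $\mathrm{Sol}(E(\epsilon))\to\mathrm{Sol}(E(\epsilon_+))$ intertwining monodromy, so $E(\epsilon_+)$ is irreducible too, and $P_-(\epsilon_+)$ is an isomorphism by the lemma as well; the composite of two isomorphisms is an isomorphism, so the scalar $Sv_{sh_+}(\epsilon)$ in the first display is nonzero. Contrapositively, $Sv_{sh_+}(\epsilon)=0$ forces $E(\epsilon)$ to be reducible. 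Proposition~\ref{twoSvalues} gives $Sv_{sh_-}(\epsilon_+)=Sv_{sh_+}(\epsilon)=0$, so running the identical argument with the roles of $(\epsilon,P_+)$ and $(\epsilon_+,P_-)$ interchanged yields reducibility of $E(\epsilon_+)$ as well; equivalently, the monodromy isomorphism transports reducibility between the two equations.

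The main obstacle is the edge case I set aside: the normalized $P_+$ has coprime polynomial coefficients in $e$ (by the uniqueness/normalization proposition of this subsection), so it may still specialize to the zero operator on the locus where all its coefficients vanish simultaneously, where the lemma is silent. I expect to close this by genericity plus closedness of reducibility. The coprimality normalization makes that common-zero locus, and likewise for $P_-$, of codimension $\geq 2$, whereas $\{Sv_{sh_+}=0\}$ is a hypersurface; off the degeneracy locus the argument above applies verbatim, so reducibility holds on a dense subset of $\{Sv_{sh_+}=0\}$. Since the existence of a monodromy-invariant subspace is a closed condition on the exponents, reducibility then propagates to the whole hypersurface, covering the exceptional points. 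Checking that the coefficient ideals of $P_+$ and $P_-$ really have height $\geq 2$ is the single quantitative point on which this completion rests.
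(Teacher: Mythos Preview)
The paper does not prove Proposition~\ref{Sred}: Section~7 opens by saying that its contents are ``extracted'' from the companion paper \cite{HOSY1}, and this proposition is simply stated without argument. There is therefore no proof in the present text to compare against.

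On its own merits your argument is the standard one and is essentially sound. The isomorphism half is clean and, as you note, needs no irreducibility hypothesis: with Proposition~\ref{twoSvalues} both composites $P_-(\epsilon_+)\circ P_+(\epsilon)$ and $P_+(\epsilon)\circ P_-(\epsilon_+)$ equal the same nonzero scalar, so $P_+(\epsilon)$ is invertible. For the reducibility half, your lemma (a nonzero operator of order $<3$ is injective on the solution space of an irreducible order-$3$ operator, by monodromy invariance of its kernel) is correct and carries the argument whenever $P_+(\epsilon)$ and $P_-(\epsilon_+)$ are both nonzero. You have correctly isolated the only residual point, the locus where a normalized shift operator specializes to zero; your patch via coprimality (codimension $\ge 2$) plus closedness of reducibility is the natural completion. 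The closedness step is asserted rather than proved, and is not supplied anywhere in this paper, so strictly speaking that is an external input. In the concrete families of Sections~2--6 most shift operators have leading term $x^a(x-1)^b\partial^2$ independent of the parameters, so the degeneration locus is often empty; where it is not (e.g.\ $P_{g-}$ in \S\ref{AppenSE3} or $P_{(1010)}$, $P_{(0011)}$ in Proposition~\ref{shiftopZ3}), your closure argument is exactly what is needed.
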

\subsection{Reducibility type and shift operators}
We discuss factorization of Fuchsian operators in $D=\mathbb{C}(x)[\partial]$.

\begin{dfn}When $E\in D$ is reducible and factorizes as
  $$E=F_1\circ \cdots\circ F_r,\quad F_j\in D,\quad 0<{\rm order}(F_j)=n_j,\ (j=1, \dots, r),$$
  we say $E$ is {\it reducible of type} $[n_1, \dots, n_r]$;
  we sometimes call $[n_1, \dots, n_r]$ the {\it type of factors}.
We often forget commas, for example, we write [12] in place of [1,2].
When only a set of factors matters,
we say $E$ is {\it reducible of type} $\{n_1, \dots, n_r\}$. 
\end{dfn}

Note that even if the equation $E$ has singularity only at $S=\{0, 1, \infty\}$,
the factors may have singularities out of $S$.

\begin{prp}\label{apparentsing} If $E$ has singularity only at $S$, then the singular points of $F_1$ and $F_r$ out of $S$ are apparent.
\end{prp}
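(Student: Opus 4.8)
The plan is to treat the two extreme factors separately: I would handle the rightmost factor $F_r$ by a direct solution-space inclusion, and then reduce the leftmost factor $F_1$ to that same case by passing to the adjoint. Throughout, fix a finite point $p\notin S$ that is a singular point of $F_1$ or of $F_r$, and recall that a singular point is \emph{apparent} exactly when the local monodromy around it is trivial (equivalently, the local solutions are single-valued).

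First I would dispose of the rightmost factor $F_r$. Writing $E=(F_1\circ\cdots\circ F_{r-1})\circ F_r$, every solution $u$ of $F_r u=0$ satisfies $Eu=(F_1\circ\cdots\circ F_{r-1})(F_r u)=0$, so $\mathrm{Sol}(F_r)\subseteq\mathrm{Sol}(E)$. Since $E$ has no singularity at $p$, its solution space consists of functions holomorphic in a neighbourhood of $p$ (existence--uniqueness at an ordinary point); hence the solutions of $F_r$, being a subspace, are holomorphic at $p$ as well. A singular point at which all solutions are holomorphic carries trivial monodromy, so $p$ is apparent for $F_r$.

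For the leftmost factor $F_1$ I would pass to the adjoint, which reverses the order of composition:
$$E^*=F_r^*\circ F_{r-1}^*\circ\cdots\circ F_1^*,$$
so that $F_1^*$ becomes the \emph{rightmost} factor of $E^*$. Two properties of the adjoint are needed. First, the finite singular locus of an operator coincides with that of its adjoint: the coefficients of $L^*$ are $\mathbb{C}$-linear combinations of the coefficients of $L$ and their derivatives, which have poles only where the coefficients of $L$ do, and $L^{**}=L$ gives the reverse inclusion. Second, a singular point is apparent for $L$ if and only if it is apparent for $L^*$, since the monodromy of the adjoint equation is the contragredient of that of $L$, and a representation is trivial precisely when its contragredient is. By the first property $E^*$ has no singularity at $p$, so the argument of the previous paragraph, applied verbatim to the factorization of $E^*$, shows that $p$ is apparent for $F_1^*$; and $p$ is a singular point of $F_1$ iff of $F_1^*$. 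By the second property apparentness transfers back, so $p$ is apparent for $F_1$.

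The bookkeeping with compositions and the inclusion $\mathrm{Sol}(F_r)\subseteq\mathrm{Sol}(E)$ is routine. The one point that deserves care, and which I regard as the crux, is the preservation of apparentness under the adjoint. The cleanest justification is the monodromy-duality statement above; alternatively one may combine the exponent relation $\rho\mapsto n-1-\rho$ at a finite singular point of an order-$n$ operator (which sends distinct integer exponents to distinct integer exponents) with the single-valuedness of the Lagrange bilinear concomitant, a constant pairing of $\mathrm{Sol}(L)$ against $\mathrm{Sol}(L^*)$, to exclude logarithmic solutions on the two sides simultaneously.
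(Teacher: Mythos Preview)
The paper does not actually supply a proof of this proposition: it appears in the final section ``Some generalities,'' which the authors describe as a collection of definitions and theorems extracted from \cite{HOSY1}, and Proposition~\ref{apparentsing} is stated there without argument. So there is no proof in this paper against which to compare your attempt.

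Your argument is correct and is in fact the standard one. The treatment of the rightmost factor via the inclusion $\mathrm{Sol}(F_r)\subset\mathrm{Sol}(E)$ is exactly right, and reducing $F_1$ to the rightmost case by taking adjoints is the natural move. The two facts you invoke about the adjoint---that it preserves the finite singular locus, and that trivial local monodromy is preserved because the monodromy of $L^*$ is contragredient to that of $L$---are both true and suffice. Your remark that the Lagrange bilinear concomitant gives an alternative route to the second fact is also accurate, though the monodromy-duality statement is the cleaner justification, as you say.
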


\begin{remark}
The way of factorization is far from unique. When we discuss the singularity of the factors of a decomposition,
we usually choose the factors so that they have least number of singular points.
\end{remark}

\begin{prp}\label{FactorType}
  Suppose $E(e)$ and $E(e_\pm)$ are connected by shift relations. If $Sv_+(\epsilon)\not=0$ (resp. $Sv_-(\epsilon)\not=0$) for some $e=\epsilon$, then $E(\epsilon)$ and $E(\epsilon_+)$ (resp. $E(\epsilon_-)$ admit the factorization of the same type.
\end{prp}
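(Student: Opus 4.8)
The plan is to recast the \emph{factorization type} $[n_1,\dots,n_r]$ of a Fuchsian operator as an isomorphism invariant of the associated left $D$-module, and then to upgrade the solution-space isomorphism furnished by Proposition~\ref{Sred} to an isomorphism of these modules. First I would set up the dictionary. If $F\in D$ divides $E$ from the right, say $E=G\circ F$, then $D\,E\subseteq D\,F$, so $D\,F/D\,E$ is a submodule of $M(e):=D/D\,E(e)$ of $\mathbb{C}(x)$-dimension $\operatorname{order}(G)$; since $D=\mathbb{C}(x)[\partial]$ is a left principal ideal domain, \emph{every} submodule of $M(e)$ arises in this way. Consequently a factorization $E=F_1\circ\cdots\circ F_r$ of type $[n_1,\dots,n_r]$ is the same datum as a chain of $D$-submodules
\[
0=M_0\subset M_1\subset\cdots\subset M_r=M(e),\qquad \dim_{\mathbb{C}(x)}\!\big(M_i/M_{i-1}\big)=n_i,
\]
obtained from the right partial products $H_i=F_i\circ\cdots\circ F_r$ via $M_i=D\,H_{i+1}/D\,E$. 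Thus the set of factorization types realized by $E(e)$ depends only on the isomorphism class of $M(e)$.

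Next I would build the module isomorphism from the shift data. Specialize $e=\epsilon$. The shift relation $(EPQE)$, namely $E(\epsilon_+)\circ P_+=Q_+\circ E(\epsilon)$, shows $E(\epsilon_+)\,P_+\in D\,E(\epsilon)$, whence right multiplication by $P_+:=P_+(\epsilon)$ descends to a left $D$-module map
\[
\Phi_+:\;M(\epsilon_+)\longrightarrow M(\epsilon),\qquad \bar 1\longmapsto \overline{P_+}.
\]
Applying the same construction to the operator $P_-(\epsilon_+)$, which satisfies $E(\epsilon)\circ P_-(\epsilon_+)=Q_-\circ E(\epsilon_+)$ because $(\epsilon_+)_-=\epsilon$, yields a map $\Phi_-:M(\epsilon)\to M(\epsilon_+)$. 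By Definition~\ref{Svalues} the composite $P_-(\epsilon_+)\circ P_+$ is $Sv_+(\epsilon)$ modulo $E(\epsilon)$, and the $D$-linearity of $\Phi_+$ gives
\[
\Phi_+\circ\Phi_-=Sv_+(\epsilon)\cdot\operatorname{id}_{M(\epsilon)}.
\]
Since $Sv_+(\epsilon)\neq 0$, the map $\Phi_-$ is injective; as $M(\epsilon)$ and $M(\epsilon_+)$ have the same $\mathbb{C}(x)$-dimension $3$, it is an isomorphism, and $\Phi_+$ is its inverse up to the scalar $Sv_+(\epsilon)$. Proposition~\ref{twoSvalues} identifies the reverse composite with the nonzero constant $Sv_-(\epsilon_+)=Sv_+(\epsilon)$, so no separate computation is needed. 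Hence $M(\epsilon)\cong M(\epsilon_+)$ as left $D$-modules.

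Combining the two steps, the isomorphism $\Phi_+$ carries submodule filtrations of $M(\epsilon)$ to submodule filtrations of $M(\epsilon_+)$ preserving the dimension vector $(n_1,\dots,n_r)$, so $E(\epsilon)$ and $E(\epsilon_+)$ realize exactly the same factorization types; the assertion for $sh_-$ is identical with $\epsilon_+$ replaced by $\epsilon_-$. I expect the main difficulty to be bookkeeping rather than conceptual depth. One must check that the shift operators and the S-value identity genuinely specialize at $e=\epsilon$: the normalized operators can carry denominators in $e$, so I would clear them and read $(EPQE)$ and the congruence $P_-(\epsilon_+)\circ P_+\equiv Sv_+(\epsilon)$ as polynomial operator identities that remain valid at $\epsilon$, the hypothesis $Sv_+(\epsilon)\neq 0$ ensuring the induced scalar is invertible. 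I would also emphasize that this argument controls only the numerical type $[n_1,\dots,n_r]$; the finer ``A0'' decoration recording apparent singularities off $\{0,1,\infty\}$ is governed separately by Proposition~\ref{apparentsing} and is not claimed to be preserved.
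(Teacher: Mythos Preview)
The paper does not actually prove Proposition~\ref{FactorType}; Section~7 only \emph{extracts} this statement from \cite{HOSY1} without argument, so there is no in-paper proof to compare against. Your $D$-module approach is correct and is the standard way to see this: factorizations $E=F_1\circ\cdots\circ F_r$ correspond bijectively to submodule chains of $D/DE$ because $D=\mathbb{C}(x)[\partial]$ is a (left and right) principal ideal domain, and the shift relation $(EPQE)$ makes right multiplication by $P_+$ a well-defined left $D$-module map whose composite with the reverse map is the nonzero scalar $Sv_+(\epsilon)$, forcing an isomorphism $M(\epsilon)\cong M(\epsilon_+)$.

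Two small points. First, a direction slip: you defined $\Phi_+:M(\epsilon_+)\to M(\epsilon)$, so in the last paragraph it is $\Phi_-$ (or $\Phi_+^{-1}$), not $\Phi_+$, that transports filtrations from $M(\epsilon)$ to $M(\epsilon_+)$; this is harmless since you already showed both are isomorphisms. Second, your caveat about specialization is well placed: the paper's normalization convention (Proposition~7.2, coefficients polynomial in $e$ and free of common factors) is exactly what guarantees that the operator identities and the congruence $P_-(\epsilon_+)\circ P_+\equiv Sv_+(\epsilon)$ survive evaluation at $e=\epsilon$, and the hypothesis $Sv_+(\epsilon)\neq 0$ then ensures the specialized $P_+(\epsilon)$ is not the zero operator.
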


\begin{thm}\label{onetwo_to_twoone} 
If an equation $E(e)$ admits a shift operator $P_+$ for a shift $sh_+:e\to e_+$, and if for some $e=\epsilon$, $E(\epsilon)$ is reducible of type $[1,2]$, then there are two cases:
 \begin{itemize}
\item $E(\epsilon_+)$ is reducible of type $[1,2]$; in this case, $E(sh_+^n(\epsilon))$ is also reducible of type $[1,2]$, and $Sv_{sh_+}(sh_+^n(\epsilon))\not=0$ for $n=1,2,\dots$, 
\item $E(\epsilon_+)$ is reducible of type $[2,1]$; in this case,  $E(sh_+^n(\epsilon))$ is reducible of type $[2,1]$, for $n=1,2,\dots$  
and $Sv_{sh_+}(\epsilon)=Sv_{sh_+}(\epsilon_+)=0$, and $Sv_{sh_+}(sh_+^n(\epsilon))\not=0$, for $n=2,3,\dots$.
\end{itemize}\end{thm}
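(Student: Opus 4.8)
The plan is to pass to the monodromy (local-system) picture and exploit that a shift operator, having single-valued rational coefficients, commutes with analytic continuation. Thus at $e=\epsilon$ the shift relation $E(\epsilon_+)\circ P_+=Q_+\circ E(\epsilon)$ makes $P_+$ a morphism of three-dimensional monodromy representations $P_+\colon \mathrm{Sol}(E(\epsilon))\to \mathrm{Sol}(E(\epsilon_+))$, and likewise $P_-\colon \mathrm{Sol}(E(\epsilon_+))\to \mathrm{Sol}(E(\epsilon))$; by Definition \ref{Svalues} and Proposition \ref{twoSvalues} these satisfy $P_-\circ P_+=Sv_{sh_+}(\epsilon)\,\mathrm{id}$ on $\mathrm{Sol}(E(\epsilon))$ and $P_+\circ P_-=Sv_{sh_+}(\epsilon)\,\mathrm{id}$ on $\mathrm{Sol}(E(\epsilon_+))$. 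I would also record the dictionary between factorization type and subrepresentation structure: reducibility of type $[1,2]$ means $\mathrm{Sol}(E)$ carries a two-dimensional subrepresentation (the solutions of the order-two right factor), while type $[2,1]$ means it carries a one-dimensional subrepresentation (the solution of an order-one right factor); Proposition \ref{apparentsing} keeps track of where the factors acquire singularities.

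The core is a single-step transition rule. Fix $\epsilon$ with $E(\epsilon)$ of type $[1,2]$ and let $V\subset \mathrm{Sol}(E(\epsilon))$ be its two-dimensional subrepresentation, irreducible for the generic seed. Since $P_+$ is a morphism, $\ker P_+$ is a subrepresentation, hence one of $0,\,V,\,\mathrm{Sol}(E(\epsilon))$. If $Sv_{sh_+}(\epsilon)\neq 0$ then $P_+$ is invertible (Proposition \ref{Sred}), so $E(\epsilon_+)\cong E(\epsilon)$ is again of type $[1,2]$ by Proposition \ref{FactorType}; this is the first alternative. If $Sv_{sh_+}(\epsilon)=0$ then $P_+$ is nonzero and non-injective, forcing $\ker P_+=V$, so $\mathrm{im}\,P_+\cong \mathrm{Sol}(E(\epsilon))/V$ is a one-dimensional subrepresentation of $\mathrm{Sol}(E(\epsilon_+))$; hence $E(\epsilon_+)$ is of type $[2,1]$, the second alternative. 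This already pins down the dichotomy and the value of $Sv_{sh_+}(\epsilon)$ at the first step.

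To run along the ladder I would argue by induction on $n$, applying the single-step rule at each $sh_+^n(\epsilon)$ and using that $n\mapsto Sv_{sh_+}(sh_+^n(\epsilon))$ is the restriction of the polynomial $Sv_{sh_+}$ to the affine line $n\mapsto \epsilon+n\,v$, where $v$ is the shift vector, so it vanishes only finitely often. The aim is to show that in the first case the type stays $[1,2]$ with $Sv_{sh_+}(sh_+^n(\epsilon))\neq 0$ for all $n\ge 1$, and in the second case the type stays $[2,1]$ for all $n\ge 1$ with vanishing S-value exactly at $n=0,1$. I would extract the precise vanishing pattern from the local data: track the exponents of the order-one right factor at $0,1,\infty$ under the shift, use the Fuchs relation to see that the integer governing the resonance moves by a fixed step, and combine this with the explicit normalized heads of $P_\pm$ to count the order of vanishing of $Sv_{sh_+}$ at the transition.

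The main obstacle, I expect, is exactly this last point together with a rigorous treatment of apparent singularities. The clean kernel--image argument presupposes that $P_\pm$ are honest morphisms of monodromy representations on $\mathbb{P}^1\setminus\{0,1,\infty\}$, whereas the normalizations listed for the $P$'s and $Q$'s show they generically carry poles, hence apparent singular points, at which the naive sub/quotient bookkeeping can break. Indeed the naive rule would predict that a second vanishing $Sv_{sh_+}(\epsilon_+)=0$ flips the type back from $[2,1]$ to $[1,2]$, which contradicts the assertion that the type persists; reconciling the double vanishing in the second case with the locked type is therefore the delicate step, and it is where the finer local-exponent analysis, together with Proposition \ref{apparentsing}, must do the real work.
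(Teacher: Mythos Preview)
The paper does not prove Theorem~\ref{onetwo_to_twoone}: Section~7 explicitly says that the definitions and theorems there are \emph{extracted from} \cite{HOSY1}, and no argument for this result appears in the present text. So there is no paper proof to compare your proposal against; any comparison would have to be with \cite{HOSY1}.

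On the substance of your sketch: the monodromy/kernel argument for the single transition step is sound and is the natural approach. Your observation that $P_+$ is a morphism of local systems, that $\ker P_+$ must be $0$, $V$, or the whole space (for a generic seed on the $[1,2]$ locus, where $V$ is the unique proper subrepresentation), and that this forces the dichotomy, is correct. One small point you gloss over: you need that $P_+$ does not specialize to the zero operator at $e=\epsilon$; this follows from the normalization in the preceding proposition (coefficients free of common polynomial factors in $e$) together with the fact that the reducibility locus has codimension one, but it deserves a sentence.

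You correctly identify the genuine gap. Your inductive scheme and the polynomiality of $n\mapsto Sv_{sh_+}(sh_+^n(\epsilon))$ give only finitely many zeros, not the asserted pattern; and, as you note, the same kernel argument applied at $\epsilon_+$ with $Sv_{sh_+}(\epsilon_+)=0$ would \emph{prima facie} flip $[2,1]$ back to $[1,2]$, contradicting the claimed persistence. Resolving this is exactly what the proof in \cite{HOSY1} must do, and it is not a matter of apparent singularities of $P_\pm$ (those are harmless for the representation-theoretic picture, since rational coefficients already guarantee monodromy equivariance). The missing ingredient is a finer analysis at $\epsilon_+$: once $E(\epsilon_+)$ is of type $[2,1]$, one must show that the one-dimensional subrepresentation $W\subset\mathrm{Sol}(E(\epsilon_+))$ is \emph{not} annihilated by $P_+(\epsilon_+)$, so that $\ker P_+(\epsilon_+)$ is forced to be a two-dimensional subrepresentation containing $W$ (hence $E(\epsilon_+)$ is actually of type $[1,1,1]$ at this boundary value), and $\mathrm{im}\,P_+(\epsilon_+)$ is one-dimensional, keeping $E(\epsilon_{++})$ of type $[2,1]$. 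That asymmetry between the $[1,2]\to[2,1]$ step and the subsequent $[2,1]\to[2,1]$ steps is what your local-exponent bookkeeping should be aimed at, and it is the real content not supplied here.
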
 
 \newpage
\bibliographystyle{amsalpha}

\bigskip

\noindent 
Yoshishige Haraoka

Josai University, Sakado 350-0295, Japan

haraoka@kumamoto-u.ac.jp

\medskip \noindent
Hiroyuki Ochiai

Institute of Mathematics for Industry, Kyushu University, Fukuoka 819-0395, Japan 

ochiai@imi.kyushu-u.ac.jp

\medskip \noindent 
Takeshi Sasaki

Kobe University, Kobe 657-8501, Japan 

yfd72128@nifty.com

\medskip
\noindent
Masaaki Yoshida

Kyushu University, Fukuoka 819-0395, Japan 

myoshida1948@jcom.home.ne.jp

\end{document}